\theoremstyle{definition}
\newtheorem{example}[equation]{Example}
\newtheorem{situation}[equation]{}
\newtheorem{remark}[equation]{Remark}
\newtheorem{definition}[equation]{Definition}
\theoremstyle{plain}
\newtheorem{theorem}[equation]{Theorem}
\newtheorem*{theorem*}{Theorem}
\newtheorem{proposition}[equation]{Proposition}
\newtheorem{lemma}[equation]{Lemma}
\newtheorem{corollary}[equation]{Corollary}
\numberwithin{equation}{section}
\setlist[enumerate]{labelindent=\parindent,leftmargin=*,topsep=0.4ex,itemsep=0.1ex}
\setlist[itemize]{labelindent=\parindent,leftmargin=*,topsep=0.4ex,itemsep=-1ex,partopsep=1ex,parsep=1ex}
\setlist[enumerate,1]{labelindent=\parindent, leftmargin=*,label=\textup{(\arabic*)},ref=\textup{\arabic*}}
\newcommand{\tube}[1]{\mathop{]#1[}}
\renewcommand{\setminus}{\mathbin{\rule[0.2em]{0.67em}{0.12em}}}%
\renewcommand{\mathbb}{\mathbf}
\title[Exponential sums]{Exponentially twisted de~Rham cohomology and rigid cohomology}
\author{Shizhang Li}
\address{Department of Mathematics,  University of Michigan}
\email{shizhang@umich.edu}
\author{Dingxin Zhang}
\address{Yau Mathematical Sciences Center, Tsinghua University}
\email{dingxin@tsinghua.edu.cn}
\begin{document}
\maketitle

\begin{abstract}
  We prove a comparison theorem between exponentially twisted de~Rham cohomology
  and rigid cohomology with coefficients in a Dwork crystal.
\end{abstract}

\section*{Introduction}

\begin{situation}\label{sit:intro}%
  \textbf{Cohomology groups with exponential twists.}
  Let \(k\) be a field. Let \(f\colon X \to \mathbb{A}^{1}_{k}\) be a morphism
  of algebraic varieties over \(k\). Depending upon what \(k\) is, one can
  consider the following realizations of the ``exponential motive''
  (in the sense of Fresán--Jossen~\cite{fresan-jossen:expmot})
  associated with the function \(f\).

  \begin{enumerate}[wide]
  \item
    Betti realization.
    When \(k\) is the field \(\mathbb{C}\) of complex numbers, one can
    consider the relative singular cohomology
    \(\mathrm{H}^{\bullet}(X^{\mathrm{an}},f^{-1}(t)^{\mathrm{an}})\)
    (here \(t\in \mathbb{C}\) and \(|t|\) is sufficiently large,
    in fact any \emph{typical value}\footnote{We say \(t\) is a typical value of
      \(f\) if \(t\) falls in the largest open subset of \(\mathbb{A}^{1,\mathrm{an}}\) on
      which \(R^{i}f_{\ast}\mathbb{Q}\) are locally constant for all \(i\).} of \(f\) will do).
    The Betti realization has an integral structure.
    \label{item:betti}
  \item
    De~Rham realization.
    For an arbitrary \(k\), one can consider the exponentially
    twisted de~Rham cohomology
    \(\mathrm{H}^{\bullet}_{\mathrm{DR}}(X,\nabla_f)\), where \(\nabla_f\) is
    the integrable connection on the trivial module \(\mathcal{O}_{X}\) defined by
    \(\nabla_f(h) = \mathrm{d}h + h \mathrm{d}f\).
    \label{item:alg-dr}
    It should be brought to the reader's attention that the connection
    \(\nabla_{f}\)  has irregular singularity, thus does not fit into the
    picture of~\cite{deligne:regular-singularity-differential-equation}.

    When \(k = \mathbb{C}\), and when the Betti cohomology is taken
    \(\mathbb{C}\) as its
    coefficient ring, it is known that the cohomology groups in (\ref{item:betti}) and
    (\ref{item:alg-dr}) are isomorphic. This theorem could be attributed to
    Deligne--Malgrange~\cite[pp.~79,~81,~87]{deligne-malgrange-ramis:irregular-correspondence},
    Dimca--Saito~(the upshot is
    \cite[Proposition~2.8]{dimca-saito_morihiko:cohomology-general-fiber-polynomial-map}),
    and Sabbah~\cite{sabbah:comparison-theorem-elementary-irregular-d-modules}.

  \item Rigid analytic de Rham realization.
    When \(k\) is a field equipped with a complete ultrametric, one can
    consider the rigid analytic version of the twisted de~Rham cohomology
    \(\mathrm{H}_{\mathrm{DR}}^{\bullet}(X^{\mathrm{an}},\nabla_{f})\).
    \label{item:rig-an-dr}

    When \(k\) is of characteristic \(0\),
    it follows from the André--Baldassarri comparison
    theorem~\cite[Theorem~6.1]{andre-baldassarri:de-rham-cohomology-differential-modules-on-algebraic-varieties}
    that (\ref{item:alg-dr}) and (\ref{item:rig-an-dr}) are isomorphic%
    \footnote{%
      The André--Baldassarri theorem as stated in loc.~cit.~does not immediately
      imply the said isomorphy, as the variety we consider is not assumed to be
      defined over a number field. Instead of walking through their d\'evissage
      argument, we shall present a proof of it.}.
    Note that the complex analytic version of this result is false even in the
    simplest situation \(X = \mathbb{A}^1\), \(f= \mathrm{Id}\), since
    \(\nabla_f\) has irregular singularity at infinity (indeed, the complex
    analytification of \(\nabla_f\) is isomorphic to the trivial connection:
    \(\nabla_{f}^{\mathrm{an}}=\mathrm{e}^{-f} \circ \mathrm{d} \circ \mathrm{e}^{f}\)).

  \item \(\ell\)-adic realization.
    Assume that \(k\) is a finite field of characteristic \(p>0\).
    Fix a nontrivial additive character
    \(\psi\colon k \to \mathbb{C}^{\ast}\), and an algebraic closure
    \(k^{\mathrm{a}}\) of \(k\). Let \(k_m\) be the subfield of \(k^{\mathrm{a}}\)
    such that \([k_m:k]=m\). One can consider the L-series associated with the
    exponential sums defined by \(f\):
    \begin{equation*}
      S_{m}(f) = \sum_{x \in X(k_m)} \psi(\operatorname{Tr}_{k_m/k}f(x)); \quad
      L_f(t) = \exp\left\{ \sum_{m=1}^{\infty} \frac{S_m}{m}t^{m} \right\}.
    \end{equation*}
    By a theorem of
    Grothendieck, this L-series is the (super) determinant of the Frobenius
    operation on a twisted \(\overline{\mathbb{Q}}_{\ell}\)-étale cohomology
    theory.
    \label{item:et}
  \item Crystalline realization.
    When \(k\) is a perfect field of characteristic \(p > 0\), one can consider the
    rigid cohomology (or rigid cohomology with compact support)
    \(\mathrm{H}^{\bullet}_{\mathrm{rig}}(X/K,f^{\ast}\mathcal{L}_{\pi})\). Here,
    \(\mathcal{L}_{\pi}\) is a certain overconvergent isocrystal on
    \(\mathbb{A}^{1}_k\) called ``Dwork isocrystal''.

    By a theorem of Berthelot, the rigid cohomology admits a Frobenius operation
    which, when \(k\) is finite, could determine the L-series as in
    Item (\ref{item:et}).
    \label{item:rigid-coh}
  \end{enumerate}
\end{situation}
In this note, we shall prove a comparison theorem between
(\ref{sit:intro}/\ref{item:alg-dr})  and (\ref{sit:intro}/\ref{item:rigid-coh}),
thus building a bridge between topology and arithmetic.

To state the result, let us set up some notation.
\begin{itemize}[wide]
\item Let \(X\) be a smooth scheme over a finitely generated
  \(\mathbb{Z}\)-algebra \(R\) of characteristic \(0\) which is an integral
  domain. Let \(f\colon X \to \mathbb{A}^{1}_{R}\) be a morphism.
  Let \(\sigma\colon R \to \mathbb{C}\) be any embedding of \(R\) into the field
  \(\mathbb{C}\) of complex numbers.
\item For each maximal \(\mathfrak{p}\) of \(R\), let \(\kappa(\mathfrak{p})\) be
  the residue field of \(\mathfrak{p}\),
  let \(K_{\mathfrak{p}}\) be the field of
  fractions of \(W(\kappa(\mathfrak{p}))[\zeta_p]\),
  the ring of Witt vectors of \(\kappa(\mathfrak{p})\) with \(p\)\textsuperscript{th}
  roots of unity adjoined.
\item For an \(R\)-algebra \(R'\), we still use \(f\) to denote the morphism
  \(X_{R'} = X \times_R \operatorname{Spec}(R')\to \mathbb{A}^{1}_{R'}\).
\end{itemize}

The most accessible statement of our result is the following.

\begin{theorem}%
  \label{theorem:main-weak}
  There is a dense Zariski open subset \(U\) of
  \(\mathrm{Spec}(R)\) such that for any \(\mathfrak{p} \in U\), any integer \(m\),
  the \(K_{\mathfrak{p}}\)-dimension of the rigid cohomology
  \(\mathrm{H}_{\mathrm{rig}}^{m}(X_{\kappa(\mathfrak{p})}/K_{\mathfrak{p}},f^{\ast}\mathcal{L}_{\pi})\)
  equals the complex dimension of the complex vector space
  \(\mathrm{H}^{m}_{\mathrm{DR}}(X\times_{R,\sigma}\operatorname{Spec}(\mathbb{C}),\nabla_f)\).
\end{theorem}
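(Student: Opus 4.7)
Write $r_m := \dim_{\mathbb{C}} \mathrm{H}^m_{\mathrm{DR}}(X \times_{R,\sigma} \operatorname{Spec}(\mathbb{C}), \nabla_f)$. My strategy is to set up a chain of dimension equalities linking this invariant to the rigid cohomology, routed through the algebraic de~Rham cohomology of $X_{K_\mathfrak{p}}$.

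\emph{Spreading out.} I first shrink $\operatorname{Spec}(R)$ to a dense Zariski open $U$ on which: $X \to \operatorname{Spec}(R)$ is smooth; $R$ is smooth over $\mathbb{Z}$, so that each residue map $R \to \kappa(\mathfrak{p})$ Hensel-lifts to $R \to W(\kappa(\mathfrak{p}))$ (and hence to a map into $\mathcal{O}_{K_\mathfrak{p}}$); and the $R$-module $\mathrm{H}^m_{\mathrm{DR}}(X_R, \nabla_f)$ is finite free with formation commuting with arbitrary base change. The last property is afforded by generic freeness applied to the de~Rham pushforward. Consequently, for each $\mathfrak{p} \in U$ and any chosen lift, $\dim_{K_\mathfrak{p}} \mathrm{H}^m_{\mathrm{DR}}(X_{K_\mathfrak{p}}, \nabla_f) = r_m$.

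\emph{Rescaling and passage to rigid analytic.} Next, I would replace $\nabla_f$ by $\nabla_{\pi f}$ without changing the dimension. Consider the family of connections $(\mathcal{O}, d + \lambda \, df)$ over $X_{K_\mathfrak{p}} \times \mathbb{G}_{m,\lambda}$; its relative de~Rham cohomology is a coherent $\mathcal{O}_{\mathbb{G}_m}$-module equipped with an integrable Gauss-Manin connection, hence a vector bundle of constant rank on the connected variety $\mathbb{G}_m$. Therefore $\dim \mathrm{H}^m_{\mathrm{DR}}(X_{K_\mathfrak{p}}, \nabla_{\pi f}) = r_m$. Applying the André-Baldassarri comparison theorem (as recalled in the introduction) then yields $\dim \mathrm{H}^m_{\mathrm{DR}}(X^{\mathrm{an}}_{K_\mathfrak{p}}, \nabla_{\pi f}) = r_m$.

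\emph{Analytic vs.\ rigid --- the main obstacle.} It remains to show $\dim \mathrm{H}^m_{\mathrm{DR}}(X^{\mathrm{an}}_{K_\mathfrak{p}}, \nabla_{\pi f}) = \dim \mathrm{H}^m_{\mathrm{rig}}(X_{\kappa(\mathfrak{p})}/K_\mathfrak{p}, f^* \mathcal{L}_\pi)$. Choose an open immersion $X \hookrightarrow \bar X$ into a proper scheme over $\mathcal{O}_{K_\mathfrak{p}}$. Rigid cohomology with Dwork coefficients is computed by the overconvergent de~Rham complex along the tube $\tube{X_{\kappa(\mathfrak{p})}} \subset \bar X^{\mathrm{an}}_{K_\mathfrak{p}}$ with connection $d + \pi \, df$, whereas $X^{\mathrm{an}}_{K_\mathfrak{p}}$ is the union of all strict neighborhoods of this tube. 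The crucial claim is that the Dwork twist --- precisely because $|\pi| = |p|^{1/(p-1)}$ is the Dwork radius --- makes the natural map between these two twisted de~Rham complexes a quasi-isomorphism. This is the heart of the argument and the main technical obstacle; I would attack it via a Dwork-type gauge-transformation trick that exploits the exact convergence radius of $e^{-\pi f}$, combined with a growth analysis of $(d + \pi \, df)$-closed forms to promote analytic sections to overconvergent ones up to exactness.
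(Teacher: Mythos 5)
Your outline correctly identifies the logical flow (spread out, rescale by $\pi$, pass to analytic de~Rham, compare with rigid), and your Gauss--Manin over $\mathbb{G}_m$ argument for the rescaling is a legitimate alternative to the paper's Lemma~\ref{lemma:constant-does-not-change-twisted-de-rham} (which instead goes through the Betti realization over $\mathbb{C}$). However, the proposal has a genuine gap at precisely the place you call ``the main obstacle'': you name the problem but give no argument for it, and the sketch you do offer (``Dwork-type gauge transformation exploiting the exact convergence radius of $\mathrm{e}^{-\pi f}$, combined with a growth analysis of closed forms'') does not obviously work. The function $\mathrm{e}^{-\pi f}$ converges only on residue disks of radius depending on position and degrades badly near the polar divisor, so a direct gauge transformation cannot be applied globally; and in any case the structure of the actual comparison is not that the two complexes are termwise isomorphic, but that after Leray to $\mathbb{A}^1$ and Mayer--Vietoris near $\infty$, \emph{both} the analytic and the overconvergent de~Rham complexes near infinity are acyclic. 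Proving this acyclicity is exactly where the paper deploys the serious inputs: $p$-adic regularity of the Gauss--Manin system (Christol--Mebkhout, with exponents in $\mathbb{Z}_p\cap\mathbb{Q}$ thanks to $p\nmid m_i$), the computation $\mathrm{R}(M\otimes L)=\rho^2$, and Robba's index theorem to kill the Euler characteristics. None of this is present in your proposal.

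Two further points. First, your ``spreading out'' step records only generic freeness of de~Rham cohomology; it misses the conditions that actually control whether the comparison holds at $\mathfrak{p}$, namely choosing a normal-crossings compactification $\overline{X}$ of $X$ over which $f$ extends, requiring good reduction of $(\overline{X},\partial\overline{X})$ at $\mathfrak{p}$, and requiring the residue characteristic to be prime to the multiplicities of the polar divisor. These are the conditions that feed into the $p$-adic regularity input; generic freeness alone is not enough to say what $U$ is. Second, you cannot simply cite Andr\'e--Baldassarri for the algebraic-to-analytic comparison: as the paper notes, their theorem assumes the variety is defined over a number field, which need not hold here; the paper supplies its own proof via Malgrange's formal index theorem and Clark's theorem. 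So the step you treat as a citation is actually another nontrivial argument.
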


In the main text, we shall give a precise condition on which \(\mathfrak{p}\) is
good for the comparison theorem to hold based on ramifications of \(f\) at
infinity. See Theorem~\ref{theorem:main}.

We shall also prove a version of Theorem~\ref{theorem:main-weak} comparing the
algebraic Higgs cohomology associated with \(f\) and an overconvergent Higgs
cohomology. See
Proposition~\ref{proposition:overconvergent-higgs-qis-algebraic-higgs}. This
latter comparison theorem  has significant weaker restrictions on the shape of \(f\).

When the \(X\) is an open subspace of \(\mathbb{P}^{1}\), the theorem is due to
Phillepe Robba~\cite{robba:index-p-adic-differential-operators-3}.
When \(X\) is a curve, the theorem is a simple corollary of Joe
Kramer-Miller's
theorem~\cite[Theorem~1.1]{kramer-miller:newton-above-hodge-for-curves}, see
Example~\ref{example:illustration-curve}.

Theorem~\ref{theorem:main-weak} (or rather its stronger version,
Theorem~\ref{theorem:main}) is desirable, because it seems that in the
literature, the methods used to
study exponential sums are either toric in nature, or only applicable to
``tame'' functions (e.g., Newton-nondegenerate, convenient Laurent polynomials),
whereas Theorem~\ref{theorem:main} is unconditional.
In practice, Theorem~\ref{theorem:main} allows us to calculate the dimension of the
rigid cohomology, hence the degree of the L-series, using
topological methods. See \S\ref{sec:examples} for some concrete examples. Here we only
explain one general procedure for producing examples on which
Theorem~\ref{theorem:main-weak} is applicable.

\begin{example}[``Standard situation'']%
  \label{example:how-to-produce-such-a-function}
  Let \(P\) be a smooth projective variety of pure dimension \(n\) over a number
  field \(\mathbf{K}\).
  Let \(\mathcal{L}_{1},\ldots,\mathcal{L}_{r}\) be invertible sheaves on \(P\).
  Suppose we are given sections \(s_{i} \in \mathrm{H}^{0}(X,\mathcal{L}_{i})\)
  of these invertible sheaves such that the zero loci \(D_{i} = \{s_i=0\}\) form
  a divisor with strict normal crossings.

  Let \(X = P \setminus \bigcup_{i=1}^{r}D_{i}\).
  Let \(s_0 \in \mathrm{H}^{0}(X,\mathcal{L}_{1}^{e_1} \otimes \cdots \otimes \mathcal{L}_{r}^{e_r})\),
  and \(s_{\infty} = s_{1}^{e_1} \cdots s_{r}^{e_r}\).
  Assume that \(X_0=\{s_0=0\}\)
  is a smooth subvariety of \(P\). Write \(X_{\infty} = \{s_{\infty}=0\}\) be
  the vanishing divisor of \(s_{\infty}\).

  Then \(g(x) = s_0(x)/s_{\infty}(x)\) is a well-defined regular function on
  \(X\). We assume in addition the divisor \(X_0=\{s_0 = 0\}\) is transverse to all the
  intersections \(D_{i_1} \cap \cdots \cap D_{i_m}\).

  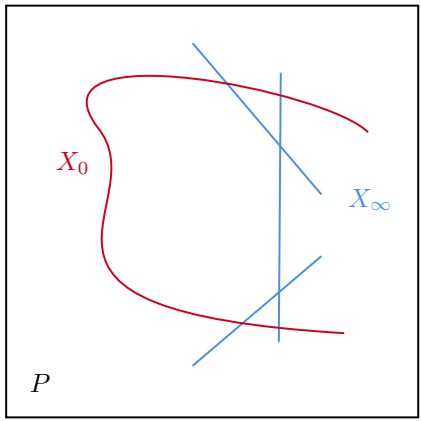
\begin{figure*}[ht]
    \centering
    \tikzset{every picture/.style={line width=0.75pt}} 
    \begin{tikzpicture}[x=0.75pt,y=0.75pt,yscale=-1,xscale=1]
      \draw   (217.35,30.08) -- (425.17,30.08) -- (425.17,237.9) -- (217.35,237.9) -- cycle ;
      \draw [color={rgb, 255:red, 74; green, 144; blue, 226 }  ,draw opacity=1 ]   (311.36,49) -- (376.35,125.4) ;
      \draw [color={rgb, 255:red, 74; green, 144; blue, 226 }  ,draw opacity=1 ]   (355.85,63.9) -- (354.85,199.9) ;
      \draw [color={rgb, 255:red, 74; green, 144; blue, 226 }  ,draw opacity=1 ]   (311.35,211.9) -- (376.36,156.5) ;
      \draw [color={rgb, 255:red, 208; green, 2; blue, 27 }  ,draw opacity=1 ]   (387.85,195.4) .. controls (196.85,183.9) and (295.35,129.9) .. (263.35,91.4) .. controls (227.85,43.4) and (379.35,72.9) .. (399.86,94) ;

      \draw (240.86,102.4) node [anchor=north west][inner sep=0.75pt]  [color={rgb, 255:red, 208; green, 2; blue, 27 }  ,opacity=1 ]  {$X_{0}$};
      \draw (388.36,120.9) node [anchor=north west][inner sep=0.75pt]  [color={rgb, 255:red, 74; green, 144; blue, 226 }  ,opacity=1 ]  {$X_{\infty}$};
      \draw (227.86,214.4) node [anchor=north west][inner sep=0.75pt]    {$P$};
    \end{tikzpicture}
    \caption{Standard situation}
    \label{fig:standard}
  \end{figure*}

  If \(\mathfrak{p}\) is a prime of \(\mathcal{O}_{\mathbf{K}}\) such that
  \begin{enumerate}
  \item the logarithmic pairs \((P,X_{\infty})\)
    and \((X_0,X_0 \cap X_{\infty})\)
    all have good reductions at \(\mathfrak{p}\), and
  \item the residue characteristic of \(\mathfrak{p}\) does not divide
    \(e_1\cdots e_r\),
  \end{enumerate}
  then Theorem \ref{theorem:main-weak} (for the function \(g\colon X \to \mathbb{A}^{1}\))
  is valid at \(\mathfrak{p}\).
  Moreover, if
  \(\bigotimes_{i=1}^{r}\mathcal{L}^{\otimes e_i}\) is ample, then the rigid
  cohomology is nonzero in degree \(n\) only.

  For more details, we refer the reader to
  Corollary~\ref{corollary:standard-situation}.
\end{example}

\begin{situation}\label{situation:previous}%
  \textbf{Previously known theorems about degrees of L-series.}

  \begin{enumerate}[wide]
  \item[(a)]  \textit{\(\ell\)-adic theorems.}
    When \(k\) is a finite field, and when \(X = \mathbb{G}_{\mathrm{m}}^{n}\),
    Denef and Loeser studied the étale cohomology appeared in (\ref{sit:intro}/\ref{item:et}).
    They showed
    that~\cite[Theorem~1.3]{denef-loeser:weights-exponential-sums-newton-polyhedra}
    if \(f\) is ``nondegenerate with respect to its Newton polyhedron at infinity'',
    then the twisted étale cohomology is acyclic except in degree \(n\), and the Frobenius
    eigenvalues are pure of weight \(n\).
    In general, they were able to show that the Euler
    characteristic of the étale cohomology agrees with the Euler characteristic of
    the algebraic de~Rham cohomology (\ref{sit:intro}/\ref{item:alg-dr}) defined by a Teichmüller
    lift of \(f\) (the combinatorial formulas for both theories match).

    In the standard situation~\eqref{example:how-to-produce-such-a-function},
    assuming the invertible sheaves \(\mathcal{L}_{i}\) are ample,
    the étale cohomology associated with the exponential sums of the function
    \(g\) was
    studied by Katz, see~\cite[Theorem~5.4.1]{katz:sommes-exponentielles}.
    In this case, he proved the L-series is a
    polynomial or a reciprocal of a polynomial, whose degree can be calculated
    using Chern classes.
    We could also deduce these results from Theorem~\ref{theorem:main-weak}.

    Katz also proved the Frobenius eigenvalues are pure.
    Our method is not capable of proving this purity result.

  \item[(b)] \textit{p-adic theorems.}
    When \(k\) is a finite field and
    \(X = \mathbb{G}_{\mathrm{m}}^{n} \times \mathbb{G}_{\mathrm{a}}^{m}\),
    the \(p\)-adic properties of the L-series were studied by
    E.~Bombieri~\cite{bombieri:exponential-sums-finite-fields},
    and later greatly expanded
    by A.~Adolphson and S.~Sperber~\cite{adolphson-sperber:newton-polyhedra-degree-l-function}.
    The studies of Adolphson and Sperber are based on Dwork's
    works~\cite{dwork:zeta-function-hypersurfaces-1,dwork:zeta-function-hypersurface-2},
    and methods from singularity theory and toric
    geometry.

    The upshot is that Adolphson and Sperber introduced
    a complex of \(p\)-adic Banach spaces, and an operator \(\alpha\)
    with trace acting on the complex, such that the hyper-determinant of \(\alpha\) gives
    rise to the L-series of the exponential sum.
    Moreover, \emph{when the function \(f\) is ``nondegenerate and convenient''},
    Adolphson and Sperber proved that the cohomology spaces of this complex are
    finite dimensional, and concentrated in a single cohomological degree. The
    dimensions of these cohomology spaces are the same as the algebraic de~Rham
    cohomology spaces.
  \end{enumerate}

  Even when the exponential sum \emph{is} defined on
  \(\mathbb{G}_{\mathrm{m}}^n\times \mathbb{G}_{\mathrm{a}}^{m}\),
  Theorem~\ref{theorem:main} could imply results that cannot be deduced from the
  classical theorems, as it could handle Newton-degenerate functions.
  See Example~\ref{example:illustration-newton-degenerate} for a (trivial)
  illustration and Example~\ref{example:b3-arrangement} for two more complicated
  cases.
\end{situation}

\begin{remark}%
  The Dwork--Bombieri--Adolphson--Sperber complex is rather different from the complex
  computing the rigid cohomology (\ref{sit:intro}/\ref{item:rigid-coh}) in two respects:
  \begin{itemize}
  \item the Dwork--Bombieri--Adolphson--Sperber complex is a complex of Banach
    spaces, whereas the complex computing rigid cohomology is a complex of
    ind-Banach spaces, and is never Banach;
  \item the Dwork--Bombieri--Adolphson--Sperber complex should be thought of as
    a twisted de~Rham complex on a rigid analytic subspace of a toric variety, but the
    rigid cohomology is defined via a complex on the rigid analytic torus (as a dagger space).
  \end{itemize}

  Nevertheless, a comparison theorem between a dagger variant of
  Adolphson--Sperber cohomology and rigid cohomology has been proven by Peigen
  Li~\cite{li_peigen:adolphson-sperber-vs-rigid}.
\end{remark}

\begin{situation}%
  \textbf{About the proof.}
  The strategy is to reduce the problems to \(\mathbb{A}^1\) via taking direct
  images, and then use the theory of \(p\)-adic ordinary differential equations to
  deal with the problems on \(\mathbb{A}^1\). There are two major inputs, namely
  Christol and Mebkhout's characterization of ``\(p\)-adic regular
  singularity''~\cite{christol-mebkhout:index-theorems-of-p-adic-differential-equations-2},
  and Robba's index computation using radii of
  convergence~\cite{robba:index-p-adic-differential-operators-3}. It should also
  be obvious that many of the arguments we present below are influenced by
  Baldassarri~\cite{baldassarri:rigid-algebraic-comparison-theorem-1}, and
  Chiarellotto~\cite{chiarellotto:rigid-algebraic-comparison-theorem}.

  In Section~\ref{sec:radii} we recall the notion of radius of convergence of a
  differential module. In Section~\ref{sec:indices} we explain how to use
  Robba's index theorem to make local calculations. In Section~\ref{sec:rig} we
  globalize the results of Section~\ref{sec:indices} and prove the main theorem.
  Section~\ref{sec:examples} contains some examples. The last section discusses
  the Higgs variant of Theorem~\ref{theorem:main-weak}.
\end{situation}

\noindent%
\textbf{Acknowledgment.} We are grateful to Daqing Wan for communications on
several examples of exponential sums and for pointing out Katz's
theorem.

\section{Radii of convergence}
\label{sec:radii}
This section reviews the notion of radius of convergence of differential
module. We also recall a few basic results, well-known to experts, that we will be
using later.

\begin{situation}\label{situation:prelim-notation}%
  \textbf{Notation.} In this section we fix the following notation.
  \begin{itemize}[wide]
  \item Let \(K\) be a complete ultrametric field of
    characteristic \(0\). Assume that the residue field of \(K\) is of
    characteristic \(p > 0\). Let \(\pi\) be an element of \(K\) satisfying
    \(\pi^{p-1}+p=0\). The field \(K\) is the ``base field'' where spaces are
    defined.

  \item Let \(\Omega\) be an algebraically closed complete ultrametric field
    containing \(K\), such that \(|\Omega| = \mathbb{R}_{\geq 0}\).
    Assume that the
    residue field \(\Omega\) is a transcendental extension of the residue field
    of \(K\). The field \(\Omega\) plays an auxiliary role which will give the
    so-called ``generic points'' to geometric objects.

  \item Let \(I\) be a connected subset of \(\mathbb{R}_{\geq 0}\).
    Let \(\Delta_{I}\) be the rigid analytic space whose underlying set is
    \begin{equation*}
      \{x \in K^{\mathrm{alg}} : |x| \in I\}.
    \end{equation*}
    Let \(\mathbb{D}^{\pm}(x;r)\) be the rigid analytic space whose underlying set is the
    open/closed disk of radius \(r\) centered at \(x \in K\).
    We use \(\mathcal{O}\) to denote the sheaf of rigid analytic functions on
    these spaces.

  \item In addition to the rigid analytic spaces above, we also consider their
    extensions to \(\Omega\).
    Let \(\Delta_{I,\Omega}\) be the analytic space over \(\Omega\) whose points
    are \(\{x \in \Omega : |x| \in I\}\).
    Similarly, for each \(\xi \in \Omega\) and \(r \in \mathbb{R}_{\geq 0}\),
    define
    \(\mathbb{D}_{\Omega}^{+}(\xi;r) = \{x \in \Omega : |x - \xi| \leq r\}\), and
    \(\mathbb{D}_{\Omega}^{-}(\xi,r) = \{x \in \Omega : |x - \xi| < r\}\).

  \item By a ``differential module'' over \(\Delta_{I}\) or
    \(\mathbb{D}^{\pm}(a;r)\), we shall mean a finite free
    \(\mathcal{O}\)-module \(\mathcal{E}\) over \(\Delta_{I}\) or \(\mathbb{D}^{\pm}(a;r)\)
    equipped with an integrable connection.

  \item  The \(\rho\)-Gauss norm on \(K[x]\) is
    \begin{equation*}
      \textstyle  \left| \sum\limits_{i\in \mathbb{N}} a_i x^i \right|_{\rho} = \sup \{|a_i| \rho^i : i \in \mathbb{N}\}.
    \end{equation*}
    It extends to the field \(K(x)\) of rational functions naturally.
    For \(\rho \in \mathbb{R}_{>0}\), denote by \(F_{\rho}\) the completion \(K(x)\)
    with respect to the \(\rho\)-Gauss norm \(|\cdot|_{\rho}\).
    It turns out that \(F_{\rho}\) is also a complete ultrametric field, and
    carries a continuous extended derivation \(\mathrm{d}/\mathrm{d}x\).
  \item A \emph{differential module} over \(F_{\rho}\) is a finite dimensional
    \(F_{\rho}\)-vector space \(V\) equipped with an \(K\)-linear map
    \(D\colon V \to V\),
    such that for any \(a \in F_{\rho}\), any \(v \in V\), the Leibniz rule
    \(D(av) = \frac{\mathrm{d}a}{\mathrm{d}x}v + a D(v)\) holds. It follows that
    \(D\) is automatically continuous.
  \end{itemize}
\end{situation}

\begin{remark}[From \(\mathcal{O}\)-modules to \(F_{\rho}\)-modules]%
  \label{remark:f-rho-as-residue-field-of-generic-point}
  Let \(I \subset \mathbb{R}_{\geq 0}\) be an interval.
  Let \(\rho > 0\) be an element in \(I\).
  Then there is a natural continuous homomorphism
  \begin{equation*}
    \varphi_{\rho}\colon \mathcal{O}(\Delta_{I}) \to F_{\rho}
  \end{equation*}
  such that \(|\varphi(f)|_{\rho} = |f|_{\rho}\).

  To construct the homomorphism, first assume \(I=[a,b]\) is a closed interval.
  Then each rigid analytic function \(f\) on \(\Delta_{I}\) could be written as
  \begin{equation*}
    f(x) = \sum_{n\in \mathbb{Z}} c_{n} x^{n}, \quad a_n \in K,
  \end{equation*}
  such that \(c_n b^{n} \to 0\) as \(n \to +\infty\),
  and \(c_n a^{n} \to 0\) as \(n \to -\infty\).
  In particular, \(c_n \rho^n \to 0\) as \(n \to \pm \infty\).
  Set \(P_N(x) = \sum_{|n| \leq N} c_n x^{n} \in K(x)\).
  Then \(P_{N} \to f\) with respect to the supremum norm
  of \(\mathcal{O}(\Delta_{I})\).
  The condition that \(c_n \rho^n \to 0\) implies that
  \((P_{N})_{N=1}^{\infty}\)  is a Cauchy sequence with respect to the
  \(\rho\)-Gauss norm on \(K(x)\).
  Hence \(\lim_{N\to +\infty} P_{N}\)
  exists in \(F_{\rho}\). We define this element to be
  \(\varphi_{\rho}(f)\).

  In general, we choose an interval \([a,b] \subset I\) containing \(\rho\)
  and define \(\varphi_{\rho}(f) = \varphi_{\rho}(f|_{\Delta_{[a,b]}})\).
  One checks that this definition satisfies the required properties.

  Thus, if \(N\) is a free \(\mathcal{O}\)-module on \(\Delta_{I}\) for some
  connected \(I \subset \mathbb{R}_{\geq 0}\), \(\rho \in I\), then the pullback
  \(V=N \otimes_{\mathcal{O}(\Delta_{I}),\varphi_{\rho}} F_{\rho}\) gives rise to a differential
  module over \(F_{\rho}\), with \(D(n)=\nabla_{\frac{\mathrm{d}}{\mathrm{d}x}}n\)
  for any \(n \in N\). For simplicity we shall write this tensor
  product simply by \(V=N \otimes_{\mathcal{O}}F_{\rho}\).
\end{remark}

\begin{definition}%
  \label{definition:radius}
  Let \(V\) be a vector space over \(F_{\rho}\) equipped with a norm \(|\cdot|\).
  Recall that the \emph{operator norm} of an operator \(T\) on \(V\) is defined
  to be \(|T|_{V} = \sup_{v\in V\setminus \{0\}} |T(v)|/|v|\); and the
  \emph{spectral radius} of \(T\) to be the quantity
  \begin{equation*}
    |T|_{\mathrm{sp},V} = \lim_{s\to \infty} |T^{s}|_{V}^{1/s}.
  \end{equation*}
  The operator norm of \(T\) certainly depends upon the norm, but
  two equivalent norms determine the same spectral
  radius~\cite[Proposition~6.1.5]{kedlaya:p-adic-differential-equations}.

  Let \(V\) be a differential module over \(F_{\rho}\).
  Then the \emph{radius of convergence} of \(V\) is
  \begin{equation*}
    \mathrm{R}(V) = |\pi| \cdot |D|_{\mathrm{sp},V}^{-1},
  \end{equation*}

  For $0<r<1$, we say a differential module \(M\) over
  \(\Delta_{\interval[open right]{r}{1}}\) or \(\Delta_{\interval[open]{r}{1}}\)
  is \emph{overconvergent} (or \emph{solvable} at \(1\)) if
  \(\lim_{\rho\to 1^{-}}\rho^{-1}\mathrm{R}(M\otimes F_{\rho}) = 1\).
\end{definition}

\begin{example}\label{example:trivial-diff-module-radius}%
  The spectral radius of the trivial differential module
  \((F_{\rho},\mathrm{d}/\mathrm{d}x)\) equals \(|\pi|\rho^{-1}\).
  Thus its radius of convergence equals \(\rho\).
  As the spectral radius of a differential module \(V\) is bigger than or equal to
  that of
  \(\mathrm{d}/\mathrm{d}x\)~\cite[Lemma~6.2.4]{kedlaya:p-adic-differential-equations},
  we know that the radius of convergence of any differential module
  over \(F_{\rho}\) is in the range \(\interval[open left]{0}{\rho}\).
\end{example}

The terminology ``radius of convergence'' comes from the so-called
``Dwork transfer theorem'', which we record below.

\begin{theorem}[Dwork]%
  \label{theorem:transfer}
  Let \(M\) be a differential module over \(\Delta_{I}\) of rank \(n\).
  Let \(\rho \in I\). Then the following two conditions are equivalent.
  \begin{enumerate}
  \item The radius of convergence of \(M \otimes_{\mathcal{O}}F_{\rho}\) is \(R\).
  \item For any \(\xi \in \Delta_{\{\rho\},\Omega}\),
    the restriction of \(M\) to the open disk
    \(\mathbb{D}_{\Omega}^{-}(\xi;R)\) has \(n\) linearly independent horizontal
    sections.
  \end{enumerate}
\end{theorem}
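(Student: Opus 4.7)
The strategy is to convert the spectral-radius condition on \(D\) into an asymptotic growth condition on the iterated derivatives of the connection matrix, and then relate this growth to the radius of convergence of the Taylor expansion of horizontal sections. Two facts drive the argument: (i) the asymptotic \(\lim_{k}|k!|^{1/k}=|\pi|\), which matches \(\mathrm{R}(V)\) with a Cauchy--Hadamard-style radius; and (ii) the existence, in \(\Delta_{\{\rho\},\Omega}\), of a \emph{generic} point \(\xi\) whose reduction is transcendental over the residue field of \(K\), so that \(|f(\xi)|=|f|_{\rho}\) for every \(f\in F_{\rho}\). Fact (ii) is the sole reason for the transcendence assumption on the residue field of \(\Omega\) in Situation~\ref{situation:prelim-notation}, and it is what lets us pass between ``spectral'' (global) and ``pointwise'' (local-at-\(\xi\)) statements.

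Trivialize \(M\) over a small closed sub-annulus of \(\Delta_{I}\) containing the circle \(|x|=\rho\); the connection takes the form \(\mathrm{d}/\mathrm{d}x+G\) for some matrix \(G\in M_{n}(\mathcal{O})\). Defining matrices \(G^{[k]}\) inductively by \(G^{[0]}=I\) and \(G^{[k+1]}=(G^{[k]})'-G^{[k]}G\), one checks that for any point \(\xi\) in the annulus the formal power series
\[
Y_{\xi}(x)\;=\;\sum_{k\geq 0}\frac{(x-\xi)^{k}}{k!}\,G^{[k]}(\xi)
\]
is the unique formal fundamental solution to \(Y'=-GY\) with \(Y_{\xi}(\xi)=I\). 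A direct computation (cf.~\cite[Ch.~9]{kedlaya:p-adic-differential-equations}) identifies the spectral radius of \(D\) on \(V=M\otimes F_{\rho}\) with \(\limsup_{k}|G^{[k]}|_{\rho}^{1/k}\), so combined with (i),
\[
\mathrm{R}(V)\;=\;\frac{|\pi|}{\limsup_{k}|G^{[k]}|_{\rho}^{1/k}}.
\]

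For \((1)\Rightarrow(2)\), take any \(\xi\in\Delta_{\{\rho\},\Omega}\). Since \(|G^{[k]}(\xi)|\leq|G^{[k]}|_{\rho}\), the Cauchy--Hadamard radius of \(Y_{\xi}\) is at least \(R\), and its \(n\) columns provide the required linearly independent horizontal sections on \(\mathbb{D}^{-}_{\Omega}(\xi;R)\). For \((2)\Rightarrow(1)\), fix a generic \(\xi\) as in (ii). Any fundamental matrix of horizontal sections on \(\mathbb{D}^{-}_{\Omega}(\xi;R)\) agrees with \(Y_{\xi}\) after right-multiplication by a constant invertible matrix (uniqueness of ODE solutions with prescribed initial data), so \(Y_{\xi}\) itself converges on that disk. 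Cauchy--Hadamard then gives \(\limsup_{k}|G^{[k]}(\xi)|^{1/k}\leq|\pi|/R\), and genericity of \(\xi\) upgrades this to \(\limsup_{k}|G^{[k]}|_{\rho}^{1/k}\leq|\pi|/R\), whence \(\mathrm{R}(V)\geq R\).

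The main obstacle lies in \((2)\Rightarrow(1)\): pointwise convergence of a fundamental solution on a single disk \(\mathbb{D}^{-}_{\Omega}(\xi;R)\) carries no a priori information about the Gauss-norm growth of the matrices \(G^{[k]}\) over the full circle, so one cannot directly bound the spectral radius on \(V\). The resolution is the generic-point identity \(|f(\xi)|=|f|_{\rho}\), which converts the local estimate at \(\xi\) into the global estimate needed to read off \(\mathrm{R}(V)\). Everything else is routine \(p\)-adic Cauchy--Hadamard bookkeeping.
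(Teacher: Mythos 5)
Your proposal gives a direct proof, whereas the paper delegates both implications to Kedlaya's book (Theorem~9.6.1 for $(1)\Rightarrow(2)$, Proposition~9.7.5 for $(2)\Rightarrow(1)$). The machinery you unpack---the Taylor recursion $G^{[k+1]}=(G^{[k]})'-G^{[k]}G$, the asymptotic $\lim_k|k!|^{1/k}=|\pi|$, Cauchy--Hadamard on the fundamental solution $Y_\xi$, and the passage between the pointwise estimate at a generic $\xi$ and the Gauss norm $|\cdot|_\rho$---is precisely what underlies those references, so this is less a different route than an explicit rendering of the standard one. Your discussion of where the transcendence hypothesis on the residue field of $\Omega$ enters is accurate and worth having spelled out.

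There is, however, a misstatement that must be corrected. The displayed identity
\[
\mathrm{R}(V)=\frac{|\pi|}{\limsup_k|G^{[k]}|_\rho^{1/k}}
\]
is false: the spectral radius of $D$ is $\max\bigl(|\mathrm{d}/\mathrm{d}x|_{\mathrm{sp},F_\rho},\,\limsup_k|G^{[k]}|_\rho^{1/k}\bigr)$, and $|\mathrm{d}/\mathrm{d}x|_{\mathrm{sp},F_\rho}=|\pi|\rho^{-1}$ (Example~\ref{example:trivial-diff-module-radius}). For the trivial module $G^{[k]}=0$ for all $k\ge 1$, so your formula would give $\mathrm{R}(V)=\infty$, whereas in fact $\mathrm{R}(V)=\rho$. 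The correct statement is $\mathrm{R}(V)=\min\bigl(\rho,\,|\pi|/\limsup_k|G^{[k]}|_\rho^{1/k}\bigr)$. Fortunately this does not derail the proof: in $(1)\Rightarrow(2)$ you only use the inequality $R\le|\pi|/\limsup_k|G^{[k]}|_\rho^{1/k}$, and in $(2)\Rightarrow(1)$ you conclude $\mathrm{R}(V)\ge\min(\rho,R)=R$ since $R\le\rho$ automatically. But the identity as written is wrong and should be replaced by the $\min$ form. A secondary point worth flagging: your $G^{[k]}$ are the Taylor-coefficient matrices, not the matrices $N_k$ of $D^k$ (which satisfy $N_{k+1}=N_k'+NN_k$) that usually appear in statements of the spectral-radius formula; the two $\limsup$s agree---$G^{[k]}$ is the transpose of the $N_k$ for the dual module and dualizing preserves the spectral radius---but the translation should be made explicit rather than left implicit under the ``direct computation'' remark.
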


\begin{proof}
  The proof of (1) \(\Rightarrow\) (2) is
  \cite[Theorem~9.6.1]{kedlaya:p-adic-differential-equations}.
  Here the variable \(t\) used by Kedlaya  is \(t-\xi\) in our context, and the
  differential module considered by Kedlaya is the restriction of \(M\) to the
  open disk (thus the connection matrix automatically has entries in the ring of
  analytic elements, fulfilling the hypothesis of the cited theorem).

  The proof of (2) \(\Rightarrow\) (1) is
  \cite[Proposition~9.7.5]{kedlaya:p-adic-differential-equations}.
  Here it is important to note that we should consider the field \(\Omega\)
  instead of \(K\) itself,  so that we have enough ``generic points'' available.
\end{proof}

The ``most'' convergent differential module over \(\Delta_{I}\) are those
who satisfy the Robba condition.

\begin{definition}%
  \label{definition:robba-condition}
  Let \(M\) be a differential module over \(\Delta_{I}\).
  \(M\) is said to satisfy the
  \emph{Robba condition}
  if for any \(\rho \in I\),
  the differential module \(M\otimes_{\mathcal{O}}F_{\rho}\) has radius of
  convergence equal to \(\rho\).
\end{definition}

\begin{lemma}%
  \label{lemma:robba-subquotient}
  Let \(M\) be a differential module over \(\Delta_{I}\) satisfying the Robba
  condition.
  Then any subquotient differential module of \(M\) satisfies the Robba condition.
\end{lemma}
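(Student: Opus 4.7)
The strategy is to reduce the Robba condition from $\Delta_I$ to each generic fibre $F_\rho$, and then to observe that on each such fibre the extremality of the radius of convergence forces the condition to descend to subquotients. Concretely, let $N$ be a subquotient differential module of $M$, written as $N = M_2/M_1$ for differential submodules $M_1 \subset M_2 \subset M$ over $\Delta_I$. Fix $\rho \in I$. Base changing along the map $\varphi_\rho \colon \mathcal{O}(\Delta_I) \to F_\rho$ from Remark~\ref{remark:f-rho-as-residue-field-of-generic-point}, and using that the inclusion $M_1 \hookrightarrow M_2$ of finite free modules remains injective after completion at the $\rho$-Gauss norm, one sees that $N \otimes_{\mathcal{O}} F_\rho$ is a subquotient differential module of $M \otimes_{\mathcal{O}} F_\rho$. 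By hypothesis the latter has radius of convergence equal to $\rho$, so it suffices to prove the pointwise statement: for any differential module $V$ over $F_\rho$ with $\mathrm{R}(V) = \rho$, every subquotient differential module $W$ also satisfies $\mathrm{R}(W) = \rho$.

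For this pointwise step, fix any $F_\rho$-vector space norm on $V$; equip submodules with the restricted norm and quotients with the quotient seminorm. For a differential submodule $W \subset V$ the operator $D$ restricts to $W$, and $|D^s|_W \leq |D^s|_V$ for every $s \geq 1$, so
\[ |D|_{\mathrm{sp},W} \;\leq\; |D|_{\mathrm{sp},V}, \]
giving $\mathrm{R}(W) \geq \mathrm{R}(V) = \rho$. For a quotient $V/W'$ by a differential submodule $W'$, the inclusion $D(W') \subset W'$ lets $D$ descend, and the analogous estimate for the quotient seminorm yields $\mathrm{R}(V/W') \geq \rho$. Combined with the universal upper bound $\mathrm{R}(\cdot) \leq \rho$ recorded in Example~\ref{example:trivial-diff-module-radius}, equality holds in both cases; iterating (take a submodule, then a quotient) gives the claim for arbitrary subquotients.

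The main obstacle --- really the only non-formal point --- is the flatness check in the reduction step, namely that $M_1 \otimes_{\mathcal{O}} F_\rho \hookrightarrow M_2 \otimes_{\mathcal{O}} F_\rho$ remains injective so that the subquotient structure is preserved under the generic-fibre functor $\varphi_\rho$. Once this is granted, the rest of the proof is an immediate consequence of the definition of the spectral radius together with the maximality of $\rho$ from Example~\ref{example:trivial-diff-module-radius}, and requires no further input from the theory of $p$-adic differential equations.
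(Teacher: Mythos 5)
Your proof is correct, but it takes a genuinely different route from the paper's. The paper reduces, via the Dwork transfer theorem (Theorem~\ref{theorem:transfer}), to the statement that $M$ is trivial on a generic open disk $\mathbb{D}^{-}_{\Omega}(x_0;|x_0|)$, and then argues about horizontal sections: a quotient of a trivial module is trivial because the images of horizontal sections both span and stay horizontal, and the submodule case follows by a dimension count in the short exact sequence of $\mathrm{H}^{0}$'s. You instead work directly with the operator norms: $|D^s|_W \le |D^s|_V$ for $D$-invariant subspaces $W$ (and likewise for the quotient seminorm), so the spectral radius can only drop and $\mathrm{R}$ can only go up; pinning it from above with the universal bound $\mathrm{R}(\cdot)\le\rho$ of Example~\ref{example:trivial-diff-module-radius} forces equality. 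Your argument is more self-contained in that it never invokes the transfer theorem, but note that it still leans on a genuine theorem, namely the bound $|D|_{\mathrm{sp}} \ge |\pi|\rho^{-1}$ from \cite[Lemma~6.2.4]{kedlaya:p-adic-differential-equations}; by calling the flatness of $F_\rho$ over $\mathcal{O}(\Delta_I)$ ``really the only non-formal point,'' you understate that dependence. The flatness itself is routine: $\varphi_\rho$ is an isometric embedding, so $\mathcal{O}(\Delta_I)\hookrightarrow F_\rho$ factors through $\operatorname{Frac}\mathcal{O}(\Delta_I)$, and a field extension of a localization is flat.
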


\begin{proof}
  Let \(x_0\) be a point of \(\Delta_{I,\Omega}\).

  Let \(M''\) be a quotient of
  \(M\). Then the horizontal basis of
  \(M|_{{\mathbb{D}_{\Omega}^{-}(x_0;|x_0|)}}\) is sent to a set of horizontal sections
  of \(M''\) which necessarily generate \(M''|_{{\mathbb{D}_{\Omega}^{-}(x_0;|x_0|)}}\).
  This implies that \(M''|_{{\mathbb{D}_{\Omega}^{-}(x_0;|x_0|)}}\) is trivial.

  Let \(M'\) be a differential submodule of \(M\). Let \(M''=M/M'\).
  Since
  \begin{equation*}
    \mathrm{H}^{0}_{\mathrm{DR}}(\mathbb{D}_{\Omega}^{-}(x_0;|x_0|),M) \to
    \mathrm{H}^{0}_{\mathrm{DR}}(\mathbb{D}_{\Omega}^{-}(x_0;|x_0|),M'')
  \end{equation*}
  is surjective, and since both \(M\) and \(M''\) are trivial differential
  modules over \(\mathbb{D}_{\Omega}^{-}(x_0;|x_0|)\), the dimension of horizontal
  sections of \(M'\) over \(\mathbb{D}_{\Omega}^{-}(x_0;|x_0|)\) equals the rank of
  \(M'\), i.e., \(M'\) is trivial on \(\mathbb{D}_{\Omega}^{-}(x_0;|x_0|)\).
\end{proof}

Finally, we quote a theorem due to Christol and
Mebkhout~\cite{christol-mebkhout:index-theorems-of-p-adic-differential-equations-2}.
See also~\cite{dwork:exponents-p-adic-differential-modules}
and~\cite[Theorem~13.7.1]{kedlaya:p-adic-differential-equations}.

\begin{theorem}[Christol--Mebkhout]%
  \label{example:regular-singularity}
  Let \(M\) be a differential module over \(\Delta_{\interval[open]{0}{1}}\).
  Assume that there exists a basis \(e_1,\ldots,e_n\) of \(M\) such that
  \begin{itemize}
  \item the entries of the matrix representation \(\eta\) of
    \(\nabla_{t\frac{\mathrm{d}}{\mathrm{d}t}}\) with respect to this basis
    belong to \(\mathcal{O}(\mathbb{D}^{-}(0;1))\),
  \item \(M\) is overconvergent (see Definition~\ref{definition:radius}),
    and
  \item the eigenvalues of \(\eta(0)\) belong to \(\mathbb{Z}_{p} \cap \mathbb{Q}\).
  \end{itemize}
  Then there exists a basis of \(M\) under which the matrix of
  \(\nabla_{t\frac{\mathrm{d}}{\mathrm{d}t}}\) has entries in
  \(\mathbb{Z}_{p}\). Moreover, \(M\) satisfies the Robba condition.
\end{theorem}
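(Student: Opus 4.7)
The plan is to construct, in a new basis, a connection matrix that is constant with entries in $\mathbb{Z}_{p}$, and then to read off the Robba condition from this normal form. The strategy follows the Christol--Mebkhout approach via ``prepared'' exponents.

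First, I would arrange that no two eigenvalues of $\eta(0)$ differ by a nonzero integer. This can be achieved by a sequence of shearing transformations, i.e.\ gauge changes of the form $t^{A}$ with $A \in M_{n}(\mathbb{Z})$ diagonalizable. Since the eigenvalues originally lie in $\mathbb{Z}_{p} \cap \mathbb{Q}$ and shearing shifts them by integers, all the hypotheses on $\eta$ are preserved. Expanding $\eta(t) = \eta_{0} + \eta_{1} t + \eta_{2} t^{2} + \cdots$ with $\eta_{k} \in M_{n}(K)$, I would then build a formal gauge $U(t) = I + \sum_{k \geq 1} U_{k} t^{k}$ satisfying $U^{-1} \eta U + t U^{-1}(\mathrm{d}U/\mathrm{d}t) = \eta_{0}$. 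At order $k \geq 1$ this reduces to solving $(k \cdot \mathrm{Id} - \mathrm{ad}_{\eta_{0}})(U_{k}) = P_{k}$ for a right-hand side $P_{k}$ determined by $\eta_{1},\ldots,\eta_{k}$ and $U_{1},\ldots,U_{k-1}$. By the preparation step, the eigenvalues $k - (\lambda_{i} - \lambda_{j})$ of the operator on the left are nonzero, so $U_{k}$ is uniquely determined.

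The central difficulty is to show that this formal $U$ actually converges on the whole open unit disk $\mathbb{D}^{-}(0;1)$. The entries of $U_{k}$ carry $p$-adic denominators of size $|k - (\lambda_{i} - \lambda_{j})|_{p}^{-1}$, which a priori may explode. Here I would use that each $\lambda_{i} - \lambda_{j}$ is rational and hence non-Liouville in the $p$-adic sense, giving $\limsup_{k \to \infty} k^{-1} \log |k - (\lambda_{i} - \lambda_{j})|_{p}^{-1} = 0$. Combined with the convergence of $\eta$ on $\mathbb{D}^{-}(0;1)$, this sub-exponential growth forces $U$ to converge on every closed subdisk of the open unit disk. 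The overconvergence hypothesis is expected to intervene here to rule out spurious irregular pieces that would obstruct the formal gauge from defining a genuine analytic object.

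Once the connection is in constant form with matrix $\eta_{0} \in M_{n}(\mathbb{Z}_{p})$, the Robba condition follows by exhibiting, for each generic point $\xi \in \Delta_{\{\rho\}, \Omega}$, a full set of horizontal sections of $M$ on $\mathbb{D}_{\Omega}^{-}(\xi; \rho)$: these are the columns of $(t/\xi)^{\eta_{0}}$, whose entries are linear combinations (from the Jordan blocks) of the Mahler-type series $(1 + u)^{\lambda} = \sum_{k \geq 0} \binom{\lambda}{k} u^{k}$ with $\lambda \in \mathbb{Z}_{p}$ and of logarithmic terms, all convergent for $|u| < 1$ where $u = (t - \xi)/\xi$. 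By Theorem~\ref{theorem:transfer} this yields radius of convergence $\rho$ at every $\rho$, i.e.\ the Robba condition. The main obstacle is the convergence step above: reconciling the purely formal linearization with the analytic overconvergence hypothesis is the technical heart of the Christol--Mebkhout theory and relies on their fine analysis of the Newton polygon of the associated differential operator.
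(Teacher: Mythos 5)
The paper does not actually prove this theorem: it is explicitly stated as a quotation of Christol--Mebkhout, with pointers to Dwork's paper on exponents and to \cite[Theorem~13.7.1]{kedlaya:p-adic-differential-equations}. So there is no ``paper's proof'' to compare against; I can only assess the sketch on its own terms.

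The central claim in your sketch --- that the rationality of the exponents forces the formal gauge $U(t)=I+\sum_{k\geq 1}U_{k}t^{k}$ to converge on every closed subdisk of $\mathbb{D}^{-}(0;1)$ because the denominators grow sub-exponentially --- is false, and this is exactly where the real content of the Christol--Mebkhout theorem lies. For $\alpha\in\mathbb{Z}_{p}\cap\mathbb{Q}$ the non-Liouville estimate says only $\log|k-\alpha|_{p}^{-1}=o(k)$, but what controls the radius of convergence of $U$ is the \emph{cumulative} product of the inverses $(k-\operatorname{ad}_{\eta_{0}})^{-1}$. For rational $\alpha$ one has
\begin{equation*}
  \sum_{k=1}^{N}v_{p}(k-\alpha)\;\sim\;\frac{N}{p-1},
  \qquad\text{so}\qquad
  \prod_{k=1}^{N}\bigl|k-\alpha\bigr|_{p}^{-1}\;\sim\;p^{N/(p-1)}=|\pi|^{-N}.
\end{equation*}
Thus the geometric mean of the denominators is $|\pi|^{-1}>1$, and the Fuchs--Frobenius gauge typically converges only on a disk of radius $|\pi|$, not on the whole open unit disk. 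A concrete counterexample to the ``non-Liouville suffices'' claim is the rank-one module given by $\nabla_{t\frac{\mathrm{d}}{\mathrm{d}t}}=t\frac{\mathrm{d}}{\mathrm{d}t}+t$ (so $\eta(t)=t$, $\eta(0)=0\in\mathbb{Z}_{p}\cap\mathbb{Q}$, and $\eta\in\mathcal{O}(\mathbb{D}^{-}(0;1))$): the gauge to constant form is $\exp(-t)$, which converges only for $|t|<|\pi|$, and the Robba condition fails at every $\rho>|\pi|$. This module simply isn't overconvergent, which is why it doesn't contradict the theorem --- but it shows that your convergence argument, which uses only the non-Liouville estimate and the analyticity of $\eta$, cannot be correct.

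The phrase that the overconvergence hypothesis ``is expected to intervene here to rule out spurious irregular pieces'' is therefore not a minor technical remark but a placeholder for the entire proof. Christol and Mebkhout do not run the naive Fuchs--Frobenius recursion and then appeal to non-Liouville growth; they exploit solvability at $1$ in an essential structural way (their theory of $p$-adic exponents, developed through Frobenius antecedents/filtrations by slopes, is what makes a convergent normalizing gauge exist on the full annulus). The last paragraph of your sketch --- deriving the Robba condition from the constant form via $(t/\xi)^{\eta_{0}}$ and the $p$-adic binomial series, using Theorem~\ref{theorem:transfer} --- is fine, but it takes the hard step for granted. In short: the ``preparation by shearing + formal linearization + solution formula'' outline is a reasonable blueprint, but the key convergence step is asserted on incorrect grounds and is precisely what the cited theorem is about.
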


\section{Indices of differential modules}
\label{sec:indices}

In this section, we use the notion
of radii of convergence and Robba's index theorem to
prove some cohomology groups are zero. The
notation and conventions made in Paragraph~\ref{situation:prelim-notation} are
still enforced in this section.

\begin{lemma}%
  \label{lemma:no-section-if-radius-small}
  Let \(N\) be a differential module over \(\Delta_{[a,b]}\).
  Assume that there exists \(a \leq \rho \leq b\) such that the
  radius of convergence of any differential submodule of \(N\otimes F_{\rho}\)
  is \(<\rho\).
  Then \(\mathrm{H}_{\mathrm{DR}}^0(\Delta_{[a,b]},N) = \{0\}\).
\end{lemma}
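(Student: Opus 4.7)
The plan is to argue by contradiction. Suppose $s \in \mathrm{H}^{0}_{\mathrm{DR}}(\Delta_{[a,b]},N)$ is a nonzero horizontal section; I will exhibit a rank-one differential submodule of $N \otimes_{\mathcal{O}} F_{\rho}$ with radius of convergence equal to $\rho$, contradicting the hypothesis.

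First I would pull back $s$ along the map $\varphi_{\rho}$ of Remark~\ref{remark:f-rho-as-residue-field-of-generic-point} to obtain $\bar{s} = s \otimes 1 \in N \otimes_{\mathcal{O}} F_{\rho}$. Nonvanishing of $\bar{s}$ follows from the injectivity of $\varphi_{\rho}$ (any nonzero analytic function on $\Delta_{[a,b]}$ has strictly positive $\rho$-Gauss norm), combined with the fact that $N$ is finite free over $\mathcal{O}$: expanding $s$ in any $\mathcal{O}$-basis of $N$ and applying $\varphi_{\rho}$ coordinatewise shows that $s \neq 0$ implies $\bar{s} \neq 0$. Moreover, horizontality is preserved by pullback, so $D\bar{s} = 0$ in $N \otimes F_{\rho}$.

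Next, the $F_{\rho}$-line $L = F_{\rho} \cdot \bar{s}$ is a rank-one differential submodule of $N \otimes F_{\rho}$: the Leibniz rule gives $D(f \bar{s}) = (\mathrm{d}f/\mathrm{d}x)\, \bar{s} \in L$ for every $f \in F_{\rho}$. With respect to the basis $\{\bar{s}\}$, the operator $D$ acts on $L$ exactly as $\mathrm{d}/\mathrm{d}x$ acts on $F_{\rho}$, so $L$ is isomorphic to the trivial differential module. By Example~\ref{example:trivial-diff-module-radius} its radius of convergence equals $\rho$, which contradicts the standing hypothesis that every differential submodule of $N \otimes F_{\rho}$ has radius of convergence strictly less than $\rho$. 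Hence $s = 0$. The proof is essentially formal once the injectivity of $\varphi_{\rho}$ and the radius of convergence of the trivial differential module are in hand, so I do not foresee a serious obstacle here.
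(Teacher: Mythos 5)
Your proposal is correct and is essentially the paper's own argument, unwound to a single section. The paper takes $N'$ to be the $\mathcal{O}$-span of \emph{all} horizontal sections, shows it is a free (hence flat) differential submodule with trivial connection, tensors with $F_\rho$ to get a trivial submodule of $N \otimes F_\rho$ of the same rank, and concludes from the radius-$\rho$ lower bound that the rank must be zero; you instead take a single nonzero horizontal section $s$, observe that $\varphi_\rho$ is injective (so $s\otimes 1\neq 0$), and get the same contradiction from the rank-one trivial submodule $F_\rho\cdot\bar s$. Both proofs hinge on the same two facts — that a horizontal section survives base change to $F_\rho$ and that the trivial differential module over $F_\rho$ has radius exactly $\rho$ (Example~\ref{example:trivial-diff-module-radius}) — so the difference is only cosmetic.
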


\begin{proof}
  Let \(N'\) be the \(\mathcal{O}\)-submodule of \(N\) generated by horizontal
  sections of \(N\). Since \(\mathcal{O}(\Delta_{[a,b]})\) is noetherian, \(N'\)
  is finitely generated, and is equipped with a trivial connection. It follows that
  \(N'\) is a finite free differential module over \(\Delta_{[a,b]}\), say of
  rank \(r\). In particular, it is flat over \(\mathcal{O}\). It follows that
  \(N'\otimes_{\mathcal{O}} F_{\rho}\) is a trivial differential submodule of
  \(N\otimes_{\mathcal{O}} F_{\rho}\) of rank \(r\). Being trivial,
  \(N'\otimes_{\mathcal{O}}F_{\rho}\) has radius of convergence equal to
  \(\rho\). The hypothesis then implies that \(r = 0\), in other words,
  \(N' = 0\) and \(\mathrm{H}^{0}_{\mathrm{DR}}(\Delta_{[a,b]},N)=\{0\}\).
\end{proof}

We give a simple calculation of the radius of convergence.

\begin{example}%
  \label{example:radius-of-exponential}
  Let \(L\) be the differential module on
  \(\Delta_{\interval[open]{0}{1}}=\mathbb{D}^{-}(0;1)\setminus \{0\}\)
  defined by the system
  \begin{equation*}
    \frac{\mathrm{d}}{\mathrm{d}x} - \frac{\pi}{x^2}.
  \end{equation*}
  Then for any \(0 < \rho < 1\), the radii of convergence of both
  \(L\) and its dual are equal to \(\rho^2 <\rho\).
\end{example}

\begin{proof}[Proof \textup{(cf.~{\cite[5.4.2]{robba:index-p-adic-differential-operators-3}})}]
  Let \(t \in \Omega\) be a any point of radius \(\rho\).
  Let \(x = t + y\).
  A horizontal section of the differential system is given by
  \(\exp\left(-\pi \left( \frac{1}{t+y} - \frac{1}{t} \right) \right)\).
  The Taylor series for \(\frac{1}{t+y} - \frac{1}{t}\) at \(y = 0\) is
  \begin{equation}\label{eq:exp-radius}
    \sum_{\nu=1}^{\infty} \pm \frac{1}{t^{\nu+1}} y^{\nu}.
  \end{equation}
  For each \(r < \rho\), the \(r\)-Gauss norm of this Taylor series equals
  \begin{equation*}
    \sup \{r^{\nu}/\rho^{\nu+1} : \nu \in \mathbb{N}\} = r/\rho^{2}.
  \end{equation*}
  Thus \(\exp\left( \pi \left( \frac{1}{t+y} - \frac{1}{t} \right) \right)\)
  converges for \(y\) in the open disk \(\mathbb{D}^{-}(0;r)\) where \(r < \rho^2\).

  We claim that
  \(u(y)=\exp\left( \pi \left( \frac{1}{t+y} - \frac{1}{t} \right) \right)\)
  diverges for some \(y\) satisfying \(|y|=\rho^{2}\).

  Indeed, write \(\frac{1}{t+y}-\frac{1}{t}\)
  as \(\frac{1}{t^2}y + h(y)\). Then \(|h(y)|_{\rho^{2}} < 1\) and
  \(\exp\left(-\pi h(y)\right)\) is convergent for all \(y \in \Omega\)
  such that \(|y|=\rho^{2}\). It follows that
  \begin{equation*}
    \exp\left( \frac{\pi y}{t^{2}}  \right) = u(y) \cdot \exp\left(-\pi h(y)\right)
  \end{equation*}
  is convergent on \(\{y \in \Omega : |y| = \rho^{2}\}\), if \(u(y)\) were
  convergent there. This is absurd, as
  \(\sum \pi^{n}/n!\) diverges. Thus, the radius of convergence of \(L\) equals
  \(\rho^2 < \rho\).

  The dual of \(L\) is the differential module associated with the differential
  system
  \begin{equation*}
    \frac{\mathrm{d}}{\mathrm{d}x} + \frac{\pi}{x^{2}},
  \end{equation*}
  and the argument is identical.
\end{proof}

\begin{lemma}%
  \label{lemma:no-section-after-twisting}
  Suppose \([a,b]\) is an interval contained in \(\interval[open]{0}{1}\).
  Let \(M\) be a differential module on \(\Delta_{[a,b]}\) satisfying the Robba
  condition.
  Then we have
  \[
    \mathrm{H}_{\mathrm{DR}}^0(\Delta_{[a,b]},M \otimes L) = 0.
  \]
\end{lemma}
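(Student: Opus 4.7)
The plan is to apply Lemma~\ref{lemma:no-section-if-radius-small} to the differential module \(N = M \otimes L\) on \(\Delta_{[a,b]}\). It therefore suffices to exhibit some \(\rho \in [a,b]\) such that every differential submodule of \((M \otimes L) \otimes F_{\rho}\) has radius of convergence strictly less than \(\rho\). Since \(M\) is Robba on the whole of \(\Delta_{[a,b]}\), I will be free to choose any \(\rho \in [a,b]\); it then remains to produce a purely local statement at the generic point of the circle of radius \(\rho\).

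First I would use the fact that \(L\) has rank~\(1\): the functor \(W \mapsto W \otimes L\) is an equivalence of categories between differential modules over \(F_{\rho}\) and itself, so every differential submodule of \((M \otimes L) \otimes F_{\rho}\) is of the form \(W \otimes L\) for a unique submodule \(W \subseteq M \otimes F_{\rho}\). Next, because \(M\) satisfies the Robba condition on \(\Delta_{[a,b]}\), Lemma~\ref{lemma:robba-subquotient} implies that every submodule \(W \subseteq M \otimes F_{\rho}\) has radius of convergence equal to \(\rho\) (applying the lemma to the smallest Robba-condition module on \(\Delta_{[a,b]}\) of which \(W\) is a localization, or more directly working with the pure-of-slope decomposition at the single point \(\rho\)).

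Finally, by Example~\ref{example:radius-of-exponential} the differential module \(L \otimes F_{\rho}\) has radius of convergence \(\rho^{2}\), which is strictly less than \(\rho\) since \([a,b] \subset \interval[open]{0}{1}\). The standard fact about radii of convergence under tensor products says that if two differential modules over \(F_{\rho}\) have distinct radii, then the radius of their tensor product is the minimum of the two (see, e.g., \cite[Lemma~6.2.8]{kedlaya:p-adic-differential-equations}). Applying this with \(R(W) = \rho\) and \(R(L \otimes F_{\rho}) = \rho^{2}\) gives \(R(W \otimes L) = \rho^{2} < \rho\) for every submodule \(W\).

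With every submodule of \((M \otimes L) \otimes F_{\rho}\) shown to have radius \(\rho^{2} < \rho\), Lemma~\ref{lemma:no-section-if-radius-small} yields \(\mathrm{H}^{0}_{\mathrm{DR}}(\Delta_{[a,b]}, M \otimes L) = 0\). The main substantive point is the tensor product formula for radii in the unequal-radii case, which is the only nontrivial input beyond what is already recorded in the excerpt; the rank-one reduction and the invocation of the Robba condition are otherwise bookkeeping.
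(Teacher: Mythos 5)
Your proof follows essentially the same route as the paper's. The paper's argument is: by Lemma~\ref{lemma:robba-subquotient} all submodules of \(M\) satisfy the Robba condition, so by Example~\ref{example:radius-of-exponential} and the tensor-product rule for radii (the paper cites \cite[Lemma~9.4.6(c)]{kedlaya:p-adic-differential-equations}, which is the radius-of-convergence form of your \cite[Lemma~6.2.8]{kedlaya:p-adic-differential-equations}) every submodule of \(M\otimes L\) has radius \(\rho^2<\rho\), and Lemma~\ref{lemma:no-section-if-radius-small} finishes. You make the rank-one reduction \(N = W\otimes L\) explicit, which the paper leaves implicit; that is fine.

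One small remark: the parenthetical justifications you offer for ``every submodule \(W\subseteq M\otimes F_{\rho}\) has radius \(\rho\)'' are somewhat awkward (Lemma~\ref{lemma:robba-subquotient} is stated for submodules over \(\Delta_{[a,b]}\), not over \(F_{\rho}\), and ``smallest Robba module of which \(W\) is a localization'' need not exist). The cleanest argument is purely local and does not need Lemma~\ref{lemma:robba-subquotient} at all: the operator norm of \(D^{s}\) restricted to a subspace is at most its norm on the ambient space, so \(\mathrm{R}(W) \geq \mathrm{R}(M\otimes F_{\rho}) = \rho\); combined with the upper bound \(\mathrm{R}(W)\leq\rho\) of Example~\ref{example:trivial-diff-module-radius} this forces \(\mathrm{R}(W)=\rho\). (The paper's own phrasing has the same imprecision, for what it is worth.)
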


\begin{proof}
  By Lemma~\ref{lemma:robba-subquotient}, any submodule of \(M\) satisfies the
  Robba condition. By
  \cite[Lemma~9.4.6(c)]{kedlaya:p-adic-differential-equations} and
  Example~\ref{example:radius-of-exponential},
  the radii of convergence of all differential submodules of \(M \otimes L\)
  are equal to the radius of convergence of \(L\),
  which is \(<\rho\) at \(F_{\rho}\).
  Thus Lemma~\ref{lemma:no-section-if-radius-small} implies the desired result.
\end{proof}

The above vanishing of cohomology groups implies the vanishing of the cohomology
groups of some special differential modules over the Robba ring.

\begin{definition}%
  \label{definition:robba-ring}
  The \emph{Robba ring} is the colimit
  \begin{equation*}
    \mathcal{R} = \mathop{\mathrm{colim}}\limits_{r\to 1^-} \mathcal{O}(\Delta_{\interval[open]{r}{1}})
    = \mathop{\mathrm{colim}}\limits_{r\to 1^-} \mathcal{O}(\Delta_{\interval[open right]{r}{1}}).
  \end{equation*}
  It is equipped with a derivation \(\mathrm{d}/\mathrm{d}x\).
  As in Paragraph~\ref{situation:prelim-notation}, one can define the notion of a
  differential module over \(\mathcal{R}\).
  Suppose \(M\) is a differential module over \(\mathcal{R}\) with derivation
  \(D\). Define \(\mathrm{H}^0_{\mathrm{DR}}(\mathcal{R},M) = \operatorname{Ker}D\), and
  \(\mathrm{H}^1_{\mathrm{DR}}(\mathcal{R},M) = \operatorname{Coker}D\).
\end{definition}

\begin{lemma}%
  \label{lemma:no-section-and-h1-over-robba}
  Let \(M\) be a differential module on the space
  \(\mathbb{D}^{-}(0;1) \setminus \{0\}\).
  Assume that \(M\) satisfies the hypothesis of
  Theorem~\ref{example:regular-singularity}.
  Let \(L\) be as in Example~\ref{example:radius-of-exponential}.
  Then we have
  \begin{equation*}
    \mathrm{H}^0_{\mathrm{DR}}(\mathcal{R},(M \otimes L) \otimes \mathcal{R})
    = \mathrm{H}^{1}_{\mathrm{DR}}(\mathcal{R},(M\otimes L)\otimes \mathcal{R}) = 0.
  \end{equation*}
\end{lemma}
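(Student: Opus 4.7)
The plan is to dispatch \(\mathrm{H}^0\) via Lemma~\ref{lemma:no-section-after-twisting} applied on a compact exhaustion, and then to deduce the vanishing of \(\mathrm{H}^1\) by running the same argument for the dual module and invoking a Serre-type duality pairing.

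A class in \(\mathrm{H}^0_{\mathrm{DR}}(\mathcal{R}, (M\otimes L)\otimes\mathcal{R})\) is represented by a horizontal section of \(M\otimes L\) over some half-open annulus \(\Delta_{\interval[open right]{r}{1}}\). Its restriction to every closed sub-annulus \(\Delta_{[a,b]}\) with \(r<a<b<1\) vanishes by Lemma~\ref{lemma:no-section-after-twisting}, and since \(\Delta_{\interval[open right]{r}{1}}\) is the union of such sub-annuli the section itself is zero, yielding \(\mathrm{H}^0 = 0\).

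For \(\mathrm{H}^1\), set \(N := M \otimes L\) and consider its dual \(N^\vee = M^\vee \otimes L^\vee\). The dual \(M^\vee\) again satisfies the hypotheses of Theorem~\ref{example:regular-singularity}: the matrix of \(\nabla_{t\mathrm{d}/\mathrm{d}t}\) in the dual basis is \(-A^\top\), whose entries remain in \(\mathcal{O}(\mathbb{D}^{-}(0;1))\) with eigenvalues at the origin still in \(\mathbb{Z}_p\cap\mathbb{Q}\), and overconvergence is preserved under duality. By Example~\ref{example:radius-of-exponential} the dual \(L^\vee\) has the same radius of convergence \(\rho^2\). Running the \(\mathrm{H}^0\) argument above with \(N^\vee\) in place of \(N\) therefore yields \(\mathrm{H}^0_{\mathrm{DR}}(\mathcal{R}, N^\vee \otimes \mathcal{R}) = 0\). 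A Serre-type duality for differential modules over the Robba ring (induced by the residue pairing) then identifies \(\mathrm{H}^1_{\mathrm{DR}}(\mathcal{R}, N \otimes \mathcal{R})\) with the \(K\)-linear dual of \(\mathrm{H}^0_{\mathrm{DR}}(\mathcal{R}, N^\vee \otimes \mathcal{R})\), forcing the former to vanish as well.

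The main obstacle will be pinning down the duality pairing in the exact finite-dimensional form needed. An equivalent route avoiding duality is to invoke the Christol--Mebkhout / Robba index formula for solvable differential modules over \(\mathcal{R}\): the module \(N\) is solvable at \(1\) by combining the Robba condition on \(M\) (Theorem~\ref{example:regular-singularity}) with Example~\ref{example:radius-of-exponential}, and it has trivial irregularity at \(|x|=1\) because the connection matrix of \(N\) in a basis adapted to Theorem~\ref{example:regular-singularity} and to \(L\) extends analytically across the boundary circle \(|x|=1\), contributing no positive slopes at the Gauss point; hence \(\chi(N)=0\), which combined with \(\mathrm{H}^0 = 0\) again forces \(\mathrm{H}^1 = 0\).
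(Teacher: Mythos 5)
Your $\mathrm{H}^0$ argument is correct and identical to the paper's.

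The $\mathrm{H}^1$ part, however, is the real content of the lemma, and neither of your two routes is sound as written. You acknowledge the first one (duality) is incomplete; for it to give anything you would need a perfect pairing $\mathrm{H}^1_{\mathrm{DR}}(\mathcal{R},N\otimes\mathcal{R}) \times \mathrm{H}^0_{\mathrm{DR}}(\mathcal{R},N^\vee\otimes\mathcal{R}) \to K$, and perfectness is exactly what requires a finiteness/index theorem as input, so this route is essentially circular without it.

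The ``alternative route avoiding duality'' contains a concrete error. You assert that $N = M\otimes L$ has ``trivial irregularity at $|x|=1$'' because the connection matrix of $L$ is analytic across the unit circle. But $p$-adic irregularity at the boundary is measured by the slope of the radius of convergence, not by poles of the matrix: by Example~\ref{example:radius-of-exponential}, $\mathrm{R}(L\otimes F_\rho)=\rho^2$, so $L$ has slope $1$ (not $0$) at $\rho\to 1^-$, and the same is true of $N$ after tensoring with the Robba module $M$ (\cite[Lemma~9.4.6(c)]{kedlaya:p-adic-differential-equations}). Having the connection matrix analytic at $|x|=1$ is a classical, Archimedean notion of regularity and is irrelevant here. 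So the premise ``no positive slopes, hence $\chi(N)=0$'' is false; and if you were to apply a Christol--Mebkhout-style index formula that reads off the irregularity from the slope, you would in fact not get $0$ from the reasoning you give. What the paper does instead is apply Robba's index theorem on a \emph{closed} sub-annulus $\Delta_{[a,b]}$: the index is the difference $\frac{\mathrm{d}\log \mathrm{R}}{\mathrm{d}\log\rho}\big|_{\rho=a} - \frac{\mathrm{d}\log\mathrm{R}}{\mathrm{d}\log\rho}\big|_{\rho=b}$, and since $\mathrm{R}=\rho^2$ uniformly, both terms equal $2$ and the index is zero \emph{by cancellation between the two boundaries}, not because the slope vanishes. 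The paper then passes to $\mathcal{R}$ by writing the complex as a $\operatorname{colim}_r \lim_n$ over nested closed annuli, killing $R^1\lim$ via Kiehl's Theorem~B, and first reduces to rank $1$ via the filtration furnished by Theorem~\ref{example:regular-singularity} (needed because Robba's index theorem is stated for differential operators of order one). These intermediate reductions are also missing from your sketch, but the decisive issue is the incorrect slope claim.
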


\begin{proof}
  Let \(s\) be a horizontal section of \(M \otimes L\) over the Robba ring,
  then there must exist \(r < 1\) such that the section is defined on the
  annulus \(\Delta_{\interval[open]{r}{1}}\), and thus on the annulus
  \(\Delta_{[a,b]}\) for any \([a,b] \subset \interval[open]{r}{1}\).
  By Lemma~\ref{lemma:no-section-after-twisting},
  we know the section has to be zero on all such \(\Delta_{[a,b]}\),
  and hence the section must be globally zero. This implies that
  \(\mathrm{H}_{\mathrm{DR}}^0(\mathcal{R},(M\otimes L)\otimes \mathcal{R})=\{0\}\).

  Thanks to the vanishing of \(\mathrm{H}^0\), the vanishing of
  \(\mathrm{H}^{1}\) will follow if we can show the Euler characteristic of
  \(M\otimes L \otimes \mathcal{R}\) is zero.
  By Theorem~\ref{example:regular-singularity}, we can make a
  \(\mathbb{Q}_p\)-linear change of bases to put the matrix of
  \(\nabla_{x\frac{\mathrm{d}}{\mathrm{d}x}}\) in a upper triangular form.
  Thus there is a filtration
  \begin{equation*}
    M = M_n \supset M_{n-1} \supset \cdots \supset M_1  \supset M_{-1} = \{0\}
  \end{equation*}
  of \(M\) by differential submodules such that the subquotients
  \(M_{i}/M_{i-1}\) are of rank \(1\), necessarily satisfy the Robba condition
  (Lemma~\ref{lemma:robba-subquotient}).
  Thus, the vanishing of the Euler characteristic of
  \(M\otimes L\otimes \mathcal{R}\) is implied by the vanishing of the Euler
  characteristic of \(M_{i}/M_{i-1} \otimes L\). Thus we may assume \(M\) has
  rank \(1\) and is defined by a differential equation
  \(\frac{\mathrm{d}}{\mathrm{d}x}-\frac{c}{x}\), with \(c \in \mathbb{Z}_{p}\cap \mathbb{Q}\).

  The de~Rham complex for \(M \otimes L \otimes \mathcal{R}\) is the filtered
  colimit
  \begin{equation*}
    \mathop{\operatorname{colim}}_{r\to 1^{-}} \mathrm{DR}(M\otimes L\otimes \mathcal{O}_{\Delta_{\interval[open right]{r}{1}}}).
  \end{equation*}
  Choose a sequence of numbers \(a_n(r)\) such that \(a_n(r)\uparrow 1^{-}\).
  The above complex reads
  \begin{equation*}
    \mathop{\operatorname{colim}}_{r\to 1^{-}}\lim_{n} \mathrm{DR}(M\otimes L|_{\Delta_{[r,a_n(r)]}}).
  \end{equation*}
  The transition maps in the inverse system having dense images, one knows that
  \(R^{1}\lim_n\) equals zero (Kiehl's Theorem B).
  Thus it suffices to prove the Euler characteristic of
  \begin{equation}\label{eq:robba-characteristic}
    \nabla_{\frac{\mathrm{d}}{\mathrm{d}x}} \colon
    M \otimes L|_{\Delta_{[r,a_n(r)]}} \to  M \otimes L|_{\Delta_{[r,a_n(r)]}}
  \end{equation}
  is zero.

  Note that \(M \otimes L\) is defined by a differential operator of order one with
  coefficients in \(\Omega(x)\). Thus we can use Robba's index
  theorem~\cite[Proposition~4.11]{robba:index-p-adic-differential-operators-3},
  which in our situation asserts that, if \(I = [a,b]\), then
  \begin{equation}\label{eq:robba-index-theorem}
    \chi(\nabla_{\mathrm{d}/\mathrm{d}x}) =
    \frac{\mathrm{d}\log \mathrm{R}((M \otimes L) \otimes_{\mathcal{O}}F_{\rho})}{\mathrm{d}\log \rho}\Big|_{\rho=a} -
    \frac{\mathrm{d}\log \mathrm{R}((M\otimes L)\otimes_{\mathcal{O}}F_{\rho})}{\mathrm{d}\log \rho}\Big|_{\rho=b}.
  \end{equation}
  Since the radius of convergence of \(M \otimes L\) at \(F_{\rho}\) always equals
  \(\rho^{2}\),
  both quantities of the right hand side of the displayed equation are equal to
  \(2\). Thus the Euler characteristic is zero. This completes the proof of the
  lemma.
\end{proof}

\begin{lemma}%
  \label{lemma:acyclicity-open-disk}
  Let \(M\) be differential module over \(\Delta_{\interval[open]{0}{1}}\) satisfying
  the hypothesis of Theorem~\ref{example:regular-singularity}.
  Let \(L\) be as in Example~\ref{example:radius-of-exponential}. Then
  \begin{align*}
    \mathrm{H}^{0}_{\mathrm{DR}}(\Delta_{\interval[open]{0}{1}},M\otimes L) =
    \mathrm{H}^{1}_{\mathrm{DR}}(\Delta_{\interval[open]{0}{1}},M\otimes L)
    = \{0\}.
  \end{align*}
\end{lemma}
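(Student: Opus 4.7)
The plan is to reduce acyclicity on the open annulus \(\Delta_{\interval[open]{0}{1}}\) to acyclicity on every closed subannulus \(\Delta_{[a,b]} \subset \Delta_{\interval[open]{0}{1}}\) and then glue, using the fact that the open annulus is admissibly covered by such closed subannuli. The vanishing of \(\mathrm{H}^{0}\) is immediate: any global horizontal section of \(M \otimes L\) restricts to a horizontal section on each \(\Delta_{[a,b]}\), and since \(M\) satisfies the Robba condition by Theorem~\ref{example:regular-singularity}, Lemma~\ref{lemma:no-section-after-twisting} forces these restrictions, hence the global section, to be zero.

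To attack \(\mathrm{H}^{1}\) I would first prove \(\mathrm{H}^{1}_{\mathrm{DR}}(\Delta_{[a,b]}, M \otimes L) = 0\) for every \([a,b] \subset \interval[open]{0}{1}\). Given the \(\mathrm{H}^{0}\)-vanishing on \(\Delta_{[a,b]}\), it suffices to check that the Euler characteristic of \(\nabla_{\mathrm{d}/\mathrm{d}x}\) on \(\Delta_{[a,b]}\) is zero, and for this I would replicate the argument in the last portion of the proof of Lemma~\ref{lemma:no-section-and-h1-over-robba}: use Theorem~\ref{example:regular-singularity} to upper triangularize the matrix of \(\nabla_{x \mathrm{d}/\mathrm{d}x}\) and thereby filter \(M\) by rank one differential submodules, each of which inherits the Robba condition by Lemma~\ref{lemma:robba-subquotient}. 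Additivity of \(\chi\) in short exact sequences then reduces the problem to the rank one case, in which \cite[Lemma~9.4.6(c)]{kedlaya:p-adic-differential-equations} together with Example~\ref{example:radius-of-exponential} shows that each subquotient of \(M \otimes L\) has radius of convergence \(\rho^{2}\) at \(F_{\rho}\). Robba's index formula \eqref{eq:robba-index-theorem} then returns zero at both endpoints of \([a,b]\), because the logarithmic derivative of \(\rho \mapsto \rho^{2}\) is identically \(2\).

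Finally I would glue. Fix an exhaustion \(\Delta_{[a_{n}, b_{n}]} \subset \Delta_{[a_{n+1}, b_{n+1}]}\) with \(a_{n} \downarrow 0\) and \(b_{n} \uparrow 1\). Given \(w \in \Gamma(\Delta_{\interval[open]{0}{1}}, M \otimes L)\), the previous step provides \(v_{n}\) on \(\Delta_{[a_{n}, b_{n}]}\) with \(\nabla v_{n} = w|_{\Delta_{[a_{n}, b_{n}]}}\), and uniqueness coming from the \(\mathrm{H}^{0}\)-vanishing forces \(v_{n+1}|_{\Delta_{[a_{n}, b_{n}]}} = v_{n}\); by the sheaf property the family glues to a global antiderivative of \(w\). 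The main obstacle I expect is verifying that Robba's index theorem is legitimately applicable to \(M \otimes L\) at arbitrary rank on a general closed subannulus; the filtration argument above is precisely what reduces this to the rank one situation already controlled by Example~\ref{example:radius-of-exponential}.
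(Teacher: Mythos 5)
Your proof is correct and follows the same core strategy as the paper: restrict to closed subannuli, establish $\mathrm{H}^{0}$-vanishing via Lemma~\ref{lemma:no-section-after-twisting}, and kill the Euler characteristic on each $\Delta_{[a,b]}$ by filtering $M$ into rank-one pieces (Theorem~\ref{example:regular-singularity}, Lemma~\ref{lemma:robba-subquotient}) and applying Robba's index formula~\eqref{eq:robba-index-theorem} with the radius computation of Example~\ref{example:radius-of-exponential}. The one place you diverge is the passage from closed subannuli back to the open annulus. The paper observes that the de~Rham complex on $\Delta_{\interval[open]{0}{1}}$ is the inverse limit of the complexes on $\Delta_{I_n}$ and invokes Kiehl's Theorem B (the transition maps $\Gamma(\Delta_{I_{n+1}},M\otimes L)\to\Gamma(\Delta_{I_n},M\otimes L)$ have dense image, so $R^1\lim$ of the terms vanishes); you instead build a global antiderivative of a given $w$ by choosing local antiderivatives $v_n$ and using the already-established $\mathrm{H}^{0}$-vanishing on $\Delta_{I_n}$ to enforce $v_{n+1}|_{\Delta_{I_n}}=v_n$, then gluing by the sheaf property. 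Both arguments are valid; yours is marginally more self-contained since it recycles the $\mathrm{H}^{0}$-vanishing you have already proved, whereas the paper's Kiehl argument would survive even if $\mathrm{H}^{0}$ on the closed subannuli did not vanish and is the more robust tool in the other places it is used (notably in Lemma~\ref{lemma:no-section-and-h1-over-robba}, where the colimit is handled differently). Either way, you should make explicit that the exhaustion $\{\Delta_{[a_n,b_n]}\}$ is an admissible covering so that the sheaf property indeed gives $\Gamma(\Delta_{\interval[open]{0}{1}},M\otimes L)=\lim_n\Gamma(\Delta_{I_n},M\otimes L)$; this is the quasi-Stein property of the open annulus and is exactly the same hypothesis the paper needs for Kiehl.
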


\begin{proof}
  The de~Rham cohomology groups are computed by the
  inverse limit
  \begin{equation*}
    \lim_n\left\{
      \Gamma(\Delta_{I_n}, M\otimes L)
      \xrightarrow{\nabla_{\frac{\mathrm{d}}{\mathrm{d}x}}}
      \Gamma(\Delta_{I_n}, M\otimes L)
    \right\}
  \end{equation*}
  where \(I_n\)  is a sequence of closed intervals \([a_n,b_n]\) contained in
  \(\interval[open]{0}{1}\) such that \(a_n \downarrow 0\), \(b_n \uparrow 1\).
  Again, \(R^1\lim\) is zero since the
  transition maps have dense image (Kiehl's Theorem B).
  Thus, it suffices to prove the vanishing of
  \(\mathrm{H}^{\ast}_{\mathrm{DR}}\) for each \(\Delta_{I}\). 

  For each closed interval \(I \subset \interval[open]{0}{1}\), the vanishing of
  zeroth cohomology follows from Lemma~\ref{lemma:no-section-after-twisting}. To
  show that the first cohomology groups are zero, it suffices to prove the Euler
  characteristic of
  \begin{equation*}
    \Gamma(\Delta_I, M\otimes L)
    \xrightarrow{\nabla_{\frac{\mathrm{d}}{\mathrm{d}x}}}
    \Gamma(\Delta_I, M\otimes L)
  \end{equation*}
  is zero.
  By Theorem~\ref{example:regular-singularity}, there is a filtration
  \begin{equation*}
    M = M_n \supset M_{n-1} \supset \cdots \supset M_1  \supset M_{-1} = \{0\}
  \end{equation*}
  of \(M\) by differential submodules such that the subquotients
  \(M_{i}/M_{i-1}\) are of rank \(1\), necessarily satisfy the Robba condition.
  By induction, it suffices to prove the assertion assuming \(M\) has
  rank \(1\). In the rank \(1\) case, Robba's index
  theorem~\eqref{eq:robba-index-theorem}
  applies. Arguing as in the proof of
  Lemma~\ref{lemma:no-section-and-h1-over-robba} shows that the Euler
  characteristic is zero.
\end{proof}

\section{Rigid cohomology associated with a regular function}
\label{sec:rig}
\begin{situation}\label{situation:overall-notation}%
  In this section, we continue using the notation made in Section~\ref{sec:radii}. Thus
  \(k\) is a perfect field of characteristic \(p > 0\),
  \(K\) is a complete discrete valued field of characteristic \(0\) containing
  an element \(\pi\) satisfying \(\pi^{p-1}+p=0\), and the residue field of
  \(K\) is \(k\).
  The ring of integers of \(K\) is denoted by \(\mathcal{O}_K\)

  Our policy is to use Gothic letters to denote schemes over \(\mathcal{O}_K\).
  For a \(\mathcal{O}_K\)-scheme \(\mathfrak{S}\),
  let \(S_0 = \mathfrak{S} \otimes_{\mathcal{O}_K} k\),
  \(S = \mathfrak{S} \otimes_{\mathcal{O}_K} K\).
  For a finite type \(K\)-scheme \(T\), let \(T^{\mathrm{an}}\) be the rigid analytic space
  associated with \(T\).

  We denote by \(\mathbb{D}^{-}(\infty;r)\) the rigid analytic space
  \(\mathbb{P}^{1}_K\setminus \mathbb{D}^{+}(0;r^{-1})\).
\end{situation}

\begin{situation}\label{situation:proper-case}%
  Let conventions be as in Paragraph~\ref{situation:overall-notation}.
  Let \(f\colon \overline{\mathfrak{X}} \to \mathbb{P}^{1}_{\mathcal{O}_K}\) be a
  proper morphism between smooth \(\mathcal{O}_K\)-schemes.
  We make the following assumption.
  \begin{itemize}
  \item[\((\ast)\)] Let \(S \subset \mathbb{P}^{1}_K\) be the non-smooth locus of
    \(f\colon \overline{X} \to \mathbb{P}^{1}_K\). Then the intersection of
    \(\mathbb{P}^{1}_k\) with the nonsmooth locus of
    \(f\colon \overline{\mathfrak{X}}\to \mathbb{P}^{1}_{\mathcal{O}_K}\)
    is contained in the Zariski closure of \(S\).
    In addition, we assume \(S \cap \mathbb{D}^{-}(\infty;1)\) is a subset of
    \(\{\infty\}\).
  \end{itemize}
  The condition \((\ast)\) will be used to ensure the Gauss--Manin connection on
  \(D^{-}(\infty;1)\) is overconvergent.

  Suppose that \(\overline{\mathfrak{X}}\) has
  relative pure dimension \(n\) over \(\mathcal{O}_K\).
  Let \(\mathfrak{X} = f^{-1}(\mathbb{A}^{1}_{\mathcal{O}_K})\).
  Assume that the polar divisor \(\mathfrak{P} = f^{\ast}(\infty_{\mathcal{O}_K})\) is a
  relative Cartier divisor with relative strict normal crossings over \(\mathcal{O}_{K}\).
  Write \(\mathfrak{P} = \sum m_i \mathfrak{D}_i\),
  where \(\mathfrak{D}_i\) are smooth proper \(\mathcal{O}_K\)-schemes.
  We assume \(p\nmid m_i\) for any \(i\).
  The morphisms \(\overline{X}_0 \to \mathbb{P}^{1}_k\)  and
  \(\overline{X} \to \mathbb{P}^{1}_{K}\) induced by \(f\)
  are still denoted by \(f\).
  This abuse of notation is unlikely to cause confusions.
\end{situation}

\begin{situation}\label{situation:tubes}%
  In order to calculate rigid cohomology, we need to set up some notation for
  tubular neighborhoods.
  For \(r<1\), set \([P_0]_{\overline{\mathfrak{X}},r} = f^{-1}(\mathbb{D}^{+}(\infty;r))\)
  (``closed tubular neighborhood of radius \(r\)''),
  \(\tube{X_0}_{\overline{\mathfrak{X}}}=\overline{X}^{\mathrm{an}}\setminus\bigcup_{r<1}[P_0]_{\overline{\mathfrak{X}},r}\),
  and \(V_r := \overline{X} \setminus [P_0]_{\overline{\mathfrak{X}},r}\).

  Denote by \(j\) the inclusion map
  \(\tube{X_0}_{\overline{\mathfrak{X}}}\to\overline{X}\), and by \(j_r\) the inclusion map
  \( V_r\to \overline{X}\).
\end{situation}

\begin{situation}[The Dwork isocrystal]\label{situation:asd}%
  In this paragraph we explain what the Dwork isocrystal is.
  The affine line \(\mathbb{A}_k^{1}\) sits in the frame
  \((\mathbb{A}^{1}_k \subset \mathbb{P}^{1}_k \subset \widehat{\mathbb{P}}^{1}_{\mathcal{O}_K})\)
  where \(\widehat{\mathbb{P}}^1_{\mathcal{O}_K}\) is the formal completion of the projective
  line over \(\mathcal{O}_K\) with respect to the maximal ideal of \(\mathcal{O}_{K}\). Therefore to
  describe this crystal we only need to write down a presentation of it (as an
  integrable connection) on \(\mathbb{P}_K^{1,\mathrm{an}}\).

  On the rigid analytic projective line, the tubular neighborhood of
  \(\infty_k \in \mathbb{P}^{1}_k\) is the complement of the closed unit disk
  \(\mathbb{D}^{+}(0,1)\) of radius \(1\), i.e., \(\mathbb{D}^{-}(\infty;1)\).
  The analytification of the algebraic integrable connection
  \begin{equation*}
    \nabla_{\text{D}} \colon \mathcal{O}_{\mathbb{A}^{1}_K} \to \Omega^{1}_{\mathbb{A}^{1}_K},\quad
    h \mapsto \mathrm{d}h + \pi h \mathrm{d} t
  \end{equation*}
  is easily seen to have \(\rho\) as its radius of convergence on \(D^{-}(a;\rho)\) for any
  \(|a| \leq 1\), \(\rho < 1\). Its restriction to the tubular neighborhood of \(\infty\)
  is precisely the connection we dealt with in
  Example~\ref{example:radius-of-exponential}, hence has radius of convergence
  equal to \(\rho^{2}\) at \(F_{\rho}\), which converges to \(1\) as
  \(\rho \to 1\). Thus
  \((\mathcal{O}_{\mathbb{A}^{1,\mathrm{an}}_K},\nabla_{\mathrm{D}})\)
  is overconvergent for any open disk. By the theory
  of rigid cohomology
  (\cite[Definition~7.2.14, Proposition~7.2.15]{le-stum:rigid-cohomology}),
  the differential module determined by
  \((\mathcal{O}_{\mathbb{A}^{1,\mathrm{an}}_K},\nabla_{\mathrm{D}})\)
  gives rise to an overconvergent isocrystal on
  \(\mathbb{A}^{1}_{k}\), which could be taken as a coefficient system for the
  rigid cohomology. We denote it by \(\mathcal{L}_{\pi}\), and call it the
  Dwork isocrystal~\cite[\S4.2.1]{le-stum:rigid-cohomology}.
\end{situation}

\begin{situation}\label{situation:exp-module-on-analytic-space}%
  Let notation and conventions be as in Paragraph~\ref{situation:proper-case}.
  Consider the morphism \(f\colon X \to \mathbb{A}^{1}_{K}\).
  Then we can define an algebraic integrable connection on the structure sheaf
  \(\mathcal{O}_{X}\)
  \begin{equation*}
    \nabla_{\pi f}\colon \mathcal{O}_{X} \to \Omega_{X/K}^{1},\quad
    \nabla_{\pi f}(h) = \mathrm{d}h + \pi h \mathrm{d}f.
  \end{equation*}
  This integrable connection is the inverse image of the connection
  \((\mathcal{O}_{\mathbb{A}^{1}},\nabla_{\mathrm{D}})\) via the morphism
  \(f\).

  By analytification, we obtain an analytic connection, still denoted by
  \(\nabla_{\pi f}\), on the rigid analytic space \(X^{\mathrm{an}}\).
\end{situation}

\begin{proposition}%
  \label{proposition:proper-comparison}
  Let notation and conventions be as
  in Paragraphs~\ref{situation:proper-case} -- \ref{situation:exp-module-on-analytic-space}.
  Then for each integer \(m\), the natural maps
  \begin{equation*}
    \mathrm{H}^{m}_{\mathrm{DR}}(X,\nabla_{\pi f}) \to
    \mathrm{H}^{m}_{\mathrm{DR}}(X^{\mathrm{an}},\nabla_{\pi f}) \to
    \mathrm{H}^{m}_{\mathrm{rig}}(X_0/K,f^{\ast}\mathcal{L}_{\pi})
  \end{equation*}
  are isomorphisms of \(K\)-vector spaces.
\end{proposition}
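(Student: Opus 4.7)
The strategy is to reduce all three cohomologies to statements on \(\mathbb{P}^1_K\) via the proper direct image along \(\overline{f}\). Since \(\nabla_{\pi f}=f^{\ast}(\mathcal{O}_{\mathbb{A}^1_K}, \nabla_{\mathrm{D}})\), a Leray/projection-formula argument identifies each of the three cohomologies with the cohomology on \(\mathbb{P}^1_K\) (respectively its analytification, respectively appropriate strict neighborhoods of \(]\mathbb{A}^1_k[\) in \(\mathbb{P}^{1,\mathrm{an}}_K\)) of the twisted Gauss--Manin complex \(R\overline{f}_{\ast}\Omega^{\bullet}_{\overline{X}/\mathbb{P}^1}(\log P)\otimes_{\mathcal{O}_{\mathbb{A}^1}}(\mathcal{O}_{\mathbb{A}^1_K},\nabla_{\mathrm{D}})\). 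By properness and the normal-crossing assumption on \(P\), the pushforward complex has coherent cohomology sheaves \(\mathcal{H}^q\) that are integrable connections on the smooth locus of \(\overline{f}\), and by assumption \((\ast)\) each \(\mathcal{H}^q\) restricts to a smooth differential module on the punctured disk \(\mathbb{D}^{-}(\infty;1)\setminus \{\infty\}\). Passing to the local coordinate \(u=1/t\) at \(\infty\), the twist by \(\nabla_{\mathrm{D}}\) becomes precisely the differential module \(L\) from Example~\ref{example:radius-of-exponential}.

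On the affine part of \(\mathbb{P}^1\), the three cohomologies of \(\mathcal{H}^q\otimes \nabla_{\mathrm{D}}\) coincide by standard GAGA for proper morphisms applied over closed disks inside \(\mathbb{A}^{1,\mathrm{an}}_K\). The content of the proposition is therefore to show that the contribution from a punctured neighborhood of \(\infty\) vanishes in all three cases, so that only the affine part remains. For this I would verify that each \(\mathcal{H}^q\), viewed as a differential module on \(\mathbb{D}^{-}(0;1)\setminus \{0\}\) in the variable \(u\), satisfies the hypothesis of the Christol--Mebkhout Theorem~\ref{example:regular-singularity}. Given this, Lemma~\ref{lemma:acyclicity-open-disk} applied to \(\mathcal{H}^q\otimes L\) provides the vanishing of the de Rham cohomology on the open punctured disk, and Lemma~\ref{lemma:no-section-and-h1-over-robba} provides the analogous vanishing over the Robba ring; these two together eliminate the boundary contribution and force the algebraic, analytic, and rigid cohomologies to agree.

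Verifying the Christol--Mebkhout hypothesis for \(\mathcal{H}^q\) requires three ingredients: (i) regular singularity at \(\infty\), which is the classical Katz--Griffiths theorem for the log-smooth family \((\overline{\mathfrak{X}},\mathfrak{P})\to \mathbb{P}^1_{\mathcal{O}_K}\); (ii) overconvergence of \(\mathcal{H}^q\) on \(\mathbb{D}^{-}(\infty;1)\), which follows from the existence of the smooth formal model \(\overline{\mathfrak{X}}\to \widehat{\mathbb{P}}^1_{\mathcal{O}_K}\) over a formal neighborhood of this disk (as guaranteed by \((\ast)\)), via Berthelot's theorem on the overconvergence of relative rigid cohomology; and (iii) rationality of the eigenvalues of the residue of \(\nabla_{t\partial_t}\) at \(\infty\) with denominators dividing the multiplicities \(m_i\), which is the local monodromy theorem at a normal-crossings degeneration; since \(p\nmid m_i\), these eigenvalues lie in \(\mathbb{Z}_p\cap \mathbb{Q}\). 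The main obstacle I anticipate is item~(ii): carefully proving that the mild hypothesis \((\ast)\) suffices for overconvergence of the Gauss--Manin near \(\infty\), rather than appealing to a full smooth-proper hypothesis on \(\overline{f}\). Additional bookkeeping is needed to ensure the Leray identifications are compatible across the three cohomology theories, especially for the rigid version where one must align the rigid pushforward with Berthelot's formalism for overconvergent isocrystals.
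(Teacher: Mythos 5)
Your strategy for the \emph{analytic de~Rham vs.\ rigid} comparison (the right-hand arrow) matches the paper's: Leray along \(\overline{f}\), covering by \(V_r\) and the punctured disk at infinity, then applying Lemma~\ref{lemma:no-section-and-h1-over-robba} over the Robba ring and Lemma~\ref{lemma:acyclicity-open-disk} on the open punctured disk, with Lemma~\ref{lemma:direct-image-satisfy-robba-condition} supplying the Christol--Mebkhout hypotheses exactly as you outline in your items (i)--(iii). That part of the proposal is essentially the paper's argument.

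The gap is in the \emph{algebraic de~Rham vs.\ analytic de~Rham} comparison (the left-hand arrow). You dispatch it with the phrase ``standard GAGA for proper morphisms applied over closed disks inside \(\mathbb{A}^{1,\mathrm{an}}_K\),'' and then claim the ``punctured neighborhood of \(\infty\) vanishes in all three cases'' via the same lemmas. But none of the machinery you invoke — Christol--Mebkhout, Lemma~\ref{lemma:acyclicity-open-disk}, Lemma~\ref{lemma:no-section-and-h1-over-robba} — applies to the algebraic side. GAGA identifies the algebraic de~Rham cohomology of \(X\) with the cohomology of the \emph{meromorphic} de~Rham complex \(\Omega^{\bullet}_{\overline{X}^{\mathrm{an}}}(\ast P_{\mathrm{red}})\) on the compactification \(\overline{X}^{\mathrm{an}}\), which is a strict subcomplex of the full analytic de~Rham complex because \(\nabla_{\pi f}\) has an irregular (essential) singularity along \(P\). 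After pushing to \(\mathbb{P}^{1}\) and performing Mayer--Vietoris, the boundary contribution for the algebraic theory is the de~Rham complex of \(\mathcal{E}(\ast S)\otimes(\mathcal{O}_{\mathbb{A}^{1}_K}(\ast S),\nabla_{\mathrm{D}})\) restricted to a small closed disk around \(\infty\) — a complex with \emph{moderate} growth conditions — and nothing in your argument shows this is acyclic. The paper handles it by a completely different route: choose a cyclic vector over the local ring \(\mathcal{O}_0\) at \(\infty\), reduce to an index computation for a differential operator, invoke Malgrange's theorem to see the \emph{formal} index over \(\widehat{\mathcal{O}}_0[1/x]\) is zero, and then invoke Clark's theorem (which needs a non-Liouville hypothesis, satisfied here because the indicial polynomial is zero) to transfer the vanishing of the index from the formal to the analytic setting. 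Without an argument of this kind your proof does not establish the first isomorphism, only the second.
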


\begin{proof}[Proof that the right hand side arrow is an isomorphism]
  Without loss of generality we could assume \(X\) is irreducible.
  When \(f|_X\colon X \to \mathbb{A}^{1}_K\) is a constant morphism, the
  proposition is trivial. In the sequel we shall assume
  \(f|_X \colon X \to \mathbb{A}^{1}_{K}\) is surjective.

  To begin with, let us write down the complex that computes the rigid cohomology.
  Let
  \begin{equation*}
    j^{\dagger}\Omega^{i}_{\overline{X}^{\mathrm{an}}/K}
    = \mathop{\operatorname{colim}}_{r\to 1^{-}} j_{r\ast} j_{r}^{\ast} \Omega^{i}_{\overline{X}^{\mathrm{an}}/K}.
  \end{equation*}
  The analytification of the
  connection \(\nabla_{\pi f}\) (still denoted by \(\nabla_{\pi f}\)) gives rise
  to an integrable connection
  \begin{equation*}
    \nabla^{\dagger} \colon j^{\dagger} \mathcal{O}_{\overline{X}^{\mathrm{an}}} \to j^{\dagger} \Omega_{\overline{X}^{\mathrm{an}}}^{1},
  \end{equation*}
  which extends to a dagger version of the de~Rham complex
  \begin{equation*}
    \mathcal{DR}(\overline{X}^{\mathrm{an}},\nabla^{\dagger}) : \qquad
    j^{\dagger} \mathcal{O}_{\overline{X}^{\mathrm{an}}} \xrightarrow{\nabla^{\dagger}} j^{\dagger} \Omega^1_{\overline{X}^{\mathrm{an}}}
    \xrightarrow{\nabla^{\dagger}} \cdots \xrightarrow{\nabla^{\dagger}} j^{\dagger}\Omega^{n}_{\overline{X}^{\mathrm{an}}}.
  \end{equation*}

  For an admissible open subspace \(V\) of \(\overline{X}^{\mathrm{an}}\),
  let \(\mathcal{DR}(V,\nabla_{\pi f}|_{V})\) be the de~Rham complex
  of \(\nabla_{\pi f}\) restricted to \(V\).
  Set
  \(\mathrm{DR}(V,\nabla_{\pi f}|_{V})=R\Gamma(V,\mathcal{DR}(V,\nabla_{\pi f}|_{V}))\).
  We have
  \[
    \mathrm{DR}(\overline{X}^{\mathrm{an}},\nabla^{\dagger})
    := R\Gamma(\overline{X}^{\mathrm{an}},\mathcal{DR}(X^{\mathrm{an}},\nabla^{\dagger}))
    = \mathop{\mathrm{colim}}\limits_{r\to 1}
    R\Gamma(\overline{X}^{\mathrm{an}},Rj_{r\ast}\mathcal{DR}(V_{r},\nabla_{\pi f}|_{V_r})).
  \]
  A priori, this complex depends upon the formal completion of the scheme
  \(\overline{\mathfrak{X}}\) along the maximal ideal of \(\mathcal{O}_K\). However,
  since \(\overline{\mathfrak{X}}\) is proper and the integrable connection
  \((\mathcal{O}_{X^{\mathrm{an}}},\nabla_{\pi f})\) is overconvergent (being
  the inverse image of an overconvergent integrable connection),
  \cite[Corollary~8.2.3]{le-stum:rigid-cohomology} asserts that it only depends
  upon \(X_0\) and \(f|_{X_0}\colon X_0 \to \mathbb{A}^{1}_k\). We have then
  \begin{equation*}
    \mathrm{DR}(\overline{X}^{\mathrm{an}},\nabla^{\dagger}) = R\Gamma_{\mathrm{rig}}(X_0,f_0^{\ast}\mathcal{L}_{\pi}).
  \end{equation*}

  Next we explain how to compute the cohomology of the de~Rham complex.
  For any \(r\) sufficiently close to \(1\), set
  \(V_r=f^{-1}(\mathbb{D}^{-}(0;r^{-1}))\).
  Then \(X^{\mathrm{an}}\) has an admissible open covering
  \begin{equation*}
    X^{\mathrm{an}} = V_r \cup f^{-1}(\mathbb{D}^{-}(\infty;1)\setminus \{\infty\}).
  \end{equation*}
  In the bounded derived category of \(K\)-vector spaces, the Mayer--Vietoris theorem
  gives an isomorphism between the de~Rham complex
  \(\mathrm{DR}(X,\nabla_{\pi f})\)  and the homotopy fiber of the map
  \begin{equation*}
    \mathrm{DR}(V_r,\nabla_{\pi f})  \oplus
    \mathrm{DR}(f^{-1}(\mathbb{D}^{-}(\infty;1)\setminus\{\infty\}),\nabla_{\pi f}) \to
    \mathrm{DR}(V_r \cap f^{-1}(\mathbb{D}^{-}(\infty;1)), \nabla_{\pi f}).
  \end{equation*}
  Since the colimit, as \(r \to 1^-\), of
  \(\mathrm{DR}(V_r,\nabla_{\pi f})\)
  equals \(R\Gamma_{\mathrm{rig}}(X_0,f_0^{\ast}\mathcal{L}_{\pi})\), in order to
  prove the comparison between rigid and de~Rham cohomology it suffices to prove
  the natural morphism
  \begin{equation}
    \label{eq:comparison}
    \mathrm{DR}(f^{-1}(\mathbb{D}^{-}(\infty;1)\setminus\{\infty\}),\nabla_{\pi f}) \to
    \mathop{\mathrm{colim}}\limits_{r\to 1^-}
    \mathrm{DR}(V_r \cap f^{-1}(\mathbb{D}^{-}(\infty;1)\setminus\{\infty\}), \nabla_{\pi f})
  \end{equation}
  is an isomorphism in the derived category of vector spaces over \(K\).

  Let \(S\) be the finite subspace of \(\mathbb{A}^{1,\mathrm{an}}_K\) containing
  all the critical values of \(f\). Let \(X' = X \setminus f^{-1}(S)\).
  Then the direct image sheaf
  \begin{equation}\label{eq:gm}
    \mathcal{E} = R^{i}f_{\ast}(\Omega_{X'/K}^{\bullet},\mathrm{d}),
  \end{equation}
  is equipped with a Gauss--Manin connection \(\nabla_{\mathrm{GM}}\).
  By projection formula, we have
  \(R^{i}(f|_{X'})_{\ast}(\mathcal{DR}(\mathcal{O}_{X^{\mathrm{an}}},\nabla_{\pi f})|_{X'})\)
  is isomorphic to the analytification of the tensor product
  \begin{equation*}
    \mathcal{E}\otimes (\mathcal{O}_{\mathbb{A}^{1}_K},\nabla_{\mathrm{D}})
  \end{equation*}
  on \(\mathbb{A}^{1,\mathrm{an}}_K\setminus S\).



  Thus, if
  \(u\colon\mathbb{P}_K^{1,\mathrm{an}}\setminus \tube{S_0}_{\mathbb{P}^1_{\mathcal{O}_K}} \to\mathbb{P}^{1,\mathrm{an}}_K\)
  denotes the open immersion, then
  \begin{equation*}
    u^{\dagger}(\mathcal{E}^{\mathrm{an}},\nabla_{\mathrm{GM}}),
  \end{equation*}
  is the rigid cohomological direct image
  \(R^i (f|_{X'_0})_{\mathrm{rig}\ast}\mathcal{O}_{X'_0}\).

  Using the Leray spectral sequence, we see that in order to prove the morphism
  (\ref{eq:comparison}) is an isomorphism, it suffices to prove that for any
  \(i\), the map
  \begin{equation}
    \label{eq:push-to-curve}
    R\Gamma(\mathbb{D}^{-}(\infty;1)\setminus\{\infty\}, \mathcal{E}^{\mathrm{an}} \otimes
    (\mathcal{O}_{\mathbb{A}^{1,\mathrm{an}}_K},\nabla_{\mathrm{D}}))
    \to    \mathop{\mathrm{colim}}\limits_{r\to 1^{-}}
    R\Gamma(\Delta_{\interval[open left]{1}{r^{-1}}}, \mathcal{E}^{\mathrm{an}} \otimes
    (\mathcal{O}_{\mathbb{A}^{1,\mathrm{an}}_K},\nabla_{\mathrm{D}}))
  \end{equation}
  is an isomorphism.

  We shall show that both sides of~\eqref{eq:push-to-curve} are acyclic.
  For convenience, we shall use a coordinate \(x\) around
  \(\infty \in \mathbb{P}^{1,\mathrm{an}}_K\), thus swap \(\infty\) and \(0\).
  Let \(M\) be the restriction of the analytification of
  \(\mathcal{E} = R^i f_{\ast}(\Omega_{X'/K}^{\bullet})\) to the disk
  \(\mathbb{D}^{-}(0;1)\setminus\{0\}\).
  In Lemma~\ref{lemma:direct-image-satisfy-robba-condition}
  below,  we shall explain that \(M\) satisfies the hypothesis of
  Theorem~\ref{example:regular-singularity}. On the other hand, the differential
  module \(L\) considered in Example~\ref{example:radius-of-exponential} is precisely the
  restriction of
  \((\mathcal{O}_{\mathbb{A}^{1}_K},\nabla_{\mathrm{D}})^{\mathrm{an}}\) in the
  vicinity of \(\infty\).

  The right hand side of (\ref{eq:push-to-curve}) now reads
  \begin{equation*}
    \mathop{\operatorname{colim}}_{r\to 1^{-}}
    \left\{
      (M\otimes L)|_{\Delta_{\interval[open right]{r}{1}}}
      \xrightarrow{\nabla_{\frac{\mathrm{d}}{\mathrm{d}x}}}
      (M\otimes L)|_{\Delta_{\interval[open right]{r}{1}}}
    \right\},
  \end{equation*}
  which is precisely the de~Rham complex of \(M \otimes L\) restricted to the
  Robba ring:
  \begin{equation*}
    (M \otimes L)\otimes_{\mathcal{O}} \mathcal{R} \xrightarrow{D} (M \otimes L)\otimes_{\mathcal{O}} \mathcal{R}.
  \end{equation*}
  Thus,
  Lemma~\ref{lemma:no-section-and-h1-over-robba} implies that the right hand side
  of~(\ref{eq:push-to-curve}) is trivial.
  The acyclity of the left hand side of (\ref{eq:push-to-curve})
  follows from Lemma~\ref{lemma:acyclicity-open-disk}. This completes the proof
  that the analytic twisted de~Rham cohomology is isomorphic to the rigid
  cohomology.
\end{proof}

\begin{lemma}%
  \label{lemma:direct-image-satisfy-robba-condition}
  Let notation be as above.
  Then the differential module \(M\) satisfies the hypotheses of
  Theorem~\ref{example:regular-singularity}.
\end{lemma}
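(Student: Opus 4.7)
The plan is to identify \(M\) with the analytification of a Deligne-type logarithmic extension of the Gauss--Manin connection at \(\infty\), and then verify the three hypotheses of Theorem~\ref{example:regular-singularity} one by one. Throughout, \(x\) denotes the coordinate on \(\mathbb{D}^{-}(0;1)\) fixed in the proof of Proposition~\ref{proposition:proper-comparison}, so that \(x=0\) corresponds to \(\infty \in \mathbb{P}^{1}_{K}\) and \(\nabla_{x\mathrm{d}/\mathrm{d}x}\) plays the role of \(\nabla_{t\mathrm{d}/\mathrm{d}t}\) of Theorem~\ref{example:regular-singularity}.

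First I would produce a basis of \(M\) in which the matrix of \(\nabla_{x\mathrm{d}/\mathrm{d}x}\) is regular at \(x=0\). Since \(\overline{\mathfrak{X}} \to \mathbb{P}^{1}_{\mathcal{O}_K}\) is proper, \(\overline{\mathfrak{X}}\) is smooth over \(\mathcal{O}_K\), and the polar divisor \(\mathfrak{P}=\sum m_{i}\mathfrak{D}_{i}\) has relative strict normal crossings, the classical Griffiths--Deligne--Katz theorem on regular singularities of Gauss--Manin provides, in a Zariski neighborhood of \(\infty\), a finite free extension \(\widetilde{\mathcal{E}}\) of \(\mathcal{E}\) whose connection has logarithmic poles along \(\infty\). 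The local monodromy theorem (Landman, Katz) moreover shows that the residue of this log-connection at \(\infty\) has all eigenvalues in \(\frac{1}{N}\mathbb{Z}\), where \(N\) is the least common multiple of the multiplicities \(m_{i}\). A basis of \(\widetilde{\mathcal{E}}\) at \(\infty\) therefore yields the desired basis of \(M\), in which the matrix \(\eta\) of \(\nabla_{x\mathrm{d}/\mathrm{d}x}\) is holomorphic at \(x=0\) and the eigenvalues of \(\eta(0)\) are rationals with denominator dividing \(N\). Combined with the hypothesis \(p\nmid m_{i}\) of Paragraph~\ref{situation:proper-case}, these eigenvalues lie in \(\mathbb{Z}_{p}\cap \mathbb{Q}\), which verifies the third hypothesis of Theorem~\ref{example:regular-singularity}.

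To verify that the entries of \(\eta\) belong to \(\mathcal{O}(\mathbb{D}^{-}(0;1))\), I would invoke the second half of hypothesis \((\ast)\) in Paragraph~\ref{situation:proper-case}. The only possible poles of \(\eta\) on the open disk \(\mathbb{D}^{-}(0;1)\) either sit at \(x=0\) (handled above) or lie above the critical set \(S\) of \(f\); since \((\ast)\) asserts \(S\cap \mathbb{D}^{-}(\infty;1)\subseteq \{\infty\}\), which in our coordinate says \(S\) meets the open unit disk only at \(x=0\), the entries of \(\eta\) are holomorphic throughout \(\mathbb{D}^{-}(0;1)\).

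The last and most delicate step is overconvergence. The first half of \((\ast)\) together with smoothness of \(\overline{\mathfrak{X}}\) over \(\mathcal{O}_{K}\) implies that in a formal neighborhood of the tube \(\mathbb{D}^{-}(\infty;1)\setminus\{\infty\}\) the morphism \(f\colon \overline{\mathfrak{X}}\to \mathbb{P}^{1}_{\mathcal{O}_K}\) restricts to a proper smooth morphism of formal schemes. Berthelot's overconvergence theorem for higher direct images under proper smooth morphisms then asserts that \(\mathcal{E}^{\mathrm{an}}\) comes, on this tube, from an overconvergent isocrystal on the corresponding open of \(\mathbb{P}^{1}_{k}\); equivalently, \(M\) is overconvergent at \(1\) in the sense of Definition~\ref{definition:radius}. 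I expect this step to be the main obstacle, because it requires translating the somewhat abstract hypothesis \((\ast)\) into a clean formal-geometric input for Berthelot's direct image theorem, whereas the regular-singular extension and the exponent bookkeeping needed earlier are essentially standard.
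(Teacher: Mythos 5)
Your proposal takes essentially the same route as the paper: regularity of the Gauss--Manin connection (via Deligne's logarithmic extension) gives a basis in which $\nabla_{x\,\mathrm{d}/\mathrm{d}x}$ has holomorphic matrix on $\mathbb{D}^-(0;1)$; the local monodromy/exponent computation together with $p\nmid m_i$ puts the eigenvalues of $\eta(0)$ in $\mathbb{Z}_p\cap\mathbb{Q}$; and overconvergence follows from Berthelot--Tsuzuki's theorem for higher direct images under the smoothness supplied by $(\ast)$. The paper's proof is terser but relies on exactly these three inputs, citing SGA~7-2 for the exponent computation and Berthelot and Tsuzuki for overconvergence.
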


\begin{proof}
  By the regularity of the Gauss--Manin system, the differential module
  \(M\) is a restriction of an algebraic integrable connection which is regular
  singular around \(0\). In particular, there exists a basis of \(M\) such that
  the derivation \(\nabla_{\frac{\mathrm{d}}{\mathrm{d}x}}\) is given by
  \begin{equation*}
    \frac{\mathrm{d}}{\mathrm{d}x} - \eta(x)
  \end{equation*}
  where \(\eta\) is a rational function which has at worst a simple pole at
  \(x = 0\) (for example, take the restriction of an algebraic basis and restrict to
  the analytic open \(\mathbb{D}^{-}(\infty;1)\)).

  As we have assumed that the multiplicities of the polar divisor are
  prime to \(p\), the algebraic calculation of exponents of the Gauss--Manin
  system around infinity
  implies that the eigenvalues of \((x\eta)|_{x=0}\) belong to
  \(\mathbb{Z}_{p} \cap \mathbb{Q}\).
  (One can embed field of definition of the variety into \(\mathbb{C}\), then
  use \cite[Expos\'e~XIV,~Proof of Proposition 4.15]{sga7-2} to show that
  the eigenvalue of \(\eta(0)\) with respect to the algebraic
  basis are rational numbers whose denominators are not divisible by \(p\).)

  Finally the overconvergence of the Gauss--Manin system in the lifted situation
  under the hypothesis \((\ast)\) is proved by
  P.~Berthelot~\cite[Th\'eor\`eme~5]{berthelot:announcement-rigid-cohomology} and
  N.~Tsuzuki~\cite[Theorem~4.1.1]{tsuzuki:base-change-theorem-and-coherence-in-rigid-cohomology}.
\end{proof}

\begin{proof}[Proof that the left hand side arrow is an isomorphism] This is a
  theorem of André and
  Baldassarri~\cite[Theorem~6.1]{andre-baldassarri:de-rham-cohomology-differential-modules-on-algebraic-varieties}.
  Since the precise hypothesis of their theorem is not met in the present
  situation (the connection is not defined over a number field), we shall
  nevertheless provide a proof. The key point is a theorem of Clark, which
  states that in a good situation, the analytic index of a differental operator
  equals its formal index.

  By GAGA, the algebraic de~Rham complex is computed by complex
  \begin{equation*}
    \mathcal{DR}_{\mathrm{mer}}\colon \quad
    \mathcal{O}_{\overline{X}^{\mathrm{an}}}(\ast P_{\mathrm{red}}) \xrightarrow{\nabla_{\pi f}}
    \Omega^1_{\overline{X}^{\mathrm{an}}}(\ast P_{\mathrm{red}}) \to \cdots \to
    \Omega^n_{\overline{X}^{\mathrm{an}}}(\ast P_{\mathrm{red}}),
  \end{equation*}
  which is a subcomplex of \(\mathcal{DR}(X^{\mathrm{an}},\nabla_{\pi f})\).
  Here
  \(\Omega^{m}_{\overline{X}^{\mathrm{an}}}({\ast}P_{\mathrm{red}})=\bigcup_{e=1}^{\infty}\Omega^{m}_{\overline{X}^{\mathrm{an}}}(eP_{\mathrm{red}})\).
  Cover \(\overline{X}^{\mathrm{an}}\) by
  \(X^{\mathrm{an}}=f^{-1}(\mathbb{A}^{1,\mathrm{an}})\) and
  \([P_0]_{\overline{\mathfrak{X}},\epsilon}=f^{-1}(\mathbb{D}^{+}(\infty;\epsilon))\),
  the Mayer--Vietoris theorem
  implies that \(\mathrm{DR}(X,\nabla_{\pi f})\) is the homotopy fiber of
  \begin{equation*}
    \mathrm{DR}(X^{\mathrm{an}},\nabla_{\pi f})  \oplus R\Gamma(f^{-1}(\mathbb{D}^{+}(\infty;\epsilon)),\mathcal{DR}_{\mathrm{mer}}) \to
    \mathrm{DR}(X^{\mathrm{an}} \cap f^{-1}(\mathbb{D}^{+}(\infty;\epsilon)), \nabla_{\pi f}).
  \end{equation*}
  Taking colimit with respect to \(\epsilon \to 0\),
  we see it suffices to prove the colimit, as \(\epsilon \to 0\), of the
  following map
  \begin{equation*}
    R\Gamma(f^{-1}(\mathbb{D}^+(\infty;\epsilon)),\mathcal{DR}_{\mathrm{mer}}) \to
    \mathrm{DR}(X^{\mathrm{an}} \cap f^{-1}(\mathbb{D}^{+}(\infty;\epsilon)), \nabla_{\pi f}).
  \end{equation*}
  is a quasi-isomorphism. Again, we shall show both items are acyclic.

  Let \(\mathcal{E}\) be the algebraic Gauss--Manin system on
  \(\mathbb{A}^{1}\setminus S\) as in~\eqref{eq:gm}.
  Let \(\iota\colon \mathbb{A}^{1}_K\setminus S \to \mathbb{P}^{1}_K\) be the inclusion.
  Let \(\mathcal{E}(\ast S) = \iota_{\ast}\mathcal{E}\).
  Using Leray spectral sequence, it suffices to prove
  the de~Rham complex of
  \begin{equation}\label{eq:an-vs-alg-after-colimit-1}
    \mathop{\operatorname{colim}}_{\epsilon\to0}
    (\mathcal{E}^{\mathrm{an}}\otimes(\mathcal{O}_{\mathbb{A}_K^{1,\mathrm{an}}},\nabla_{\mathrm{D}}))
    |_{\mathbb{D}^{+}(\infty;\epsilon)\setminus\{\infty\}}
  \end{equation}
  (``de~Rham complex with essential singularities'')
  and that of
  \begin{equation}\label{eq:an-vs-alg-after-colimit-2}
    \mathop{\operatorname{colim}}_{\epsilon\to0}
    (\mathcal{E}(\ast S)\otimes(\mathcal{O}_{\mathbb{A}^{1}_K}(\ast S),\nabla_{\mathrm{D}}))^{\mathrm{an}}|_{\mathbb{D}^{+}(\infty;\epsilon)}.
  \end{equation}
  (``de~Rham complex with moderate singularities'') are acyclic.

  A similar argument as in the proof of Lemma~\ref{lemma:acyclicity-open-disk}
  using Robba's index theorem yields that
  \eqref{eq:an-vs-alg-after-colimit-1} has zero Euler characteristic and
  vanishing \(\mathrm{H}^0\), thus acyclic.

  Since the de~Rham complex of~\eqref{eq:an-vs-alg-after-colimit-2} is a subcomplex
  of that of~\eqref{eq:an-vs-alg-after-colimit-1},
  its \(\mathrm{H}^{0}\) is also trivial.

  We proceed to prove that~\eqref{eq:an-vs-alg-after-colimit-2}
  has zero Euler characteristic.
  Let \(\mathcal{O}_0\) be the local ring
  \(\mathcal{O}_{\mathbb{P}^{1,\mathrm{an}}_K,\infty}\) with the uniformizer
  \(x\) defined by a coordinate around \(\infty\). Let
  \(\widehat{\mathcal{O}}_{0}\cong K[\![x]\!]\) be its \(x\)-adic completion.
  Then the de~Rham complex of~\eqref{eq:an-vs-alg-after-colimit-2} is that of
  \begin{align*}
    &(\mathcal{E}(\ast S)\otimes(\mathcal{O}_{\mathbb{A}^{1}_K}(\ast S),\nabla_{\mathrm{D}}))^{\mathrm{an}} \otimes_{\mathcal{O}_{\mathbb{P}_K^{1,\mathrm{an}}}} \mathcal{O}_{0} \\
    =&
    (\mathcal{E}(\ast S)\otimes(\mathcal{O}_{\mathbb{A}^{1}_K}(\ast S),\nabla_{\mathrm{D}}))^{\mathrm{an}} \otimes_{\mathcal{O}_{\mathbb{P}_K^{1,\mathrm{an}}}} \mathcal{O}_{0}[1/x].
  \end{align*}
  Choose a cyclic vector
  \cite[II~1.3]{deligne:regular-singularity-differential-equation}
  for the differential module
  \[
    (\mathcal{E}(\ast S)\otimes(\mathcal{O}_{\mathbb{A}^{1}_K}(\ast S),\nabla_{\mathrm{D}}))^{\mathrm{an}}
    \otimes_{\mathcal{O}_{\mathbb{P}_K^{1,\mathrm{an}}}} \mathcal{O}_{0}[1/x]
  \]
  over the differential field \(\mathcal{O}_{0}[1/x]\).
  Thus we obtain a differential operator
  \(u = \sum_{i} a_i\frac{\mathrm{d}^i}{\mathrm{d}x^i}\)
  with \(a_i \in \mathcal{O}_0\), and the Euler characteristic
  equals the index of
  \begin{equation*}
    \mathcal{O}_{0}[1/x] \xrightarrow{u} \mathcal{O}_{0}[1/x].
  \end{equation*}
  On the other hand, Malgrange's index
  theorem~\cite[Th\'eor\`eme~2.1b]{malgrange:on-singular-points-diff-equations}
  implies that the index of
  \begin{equation*}
    \widehat{\mathcal{O}}_{0}[1/x] \xrightarrow{u} \widehat{\mathcal{O}}_0[1/x]
  \end{equation*}
  equals zero. Thus, it suffices to prove the index of
  \begin{equation}\label{eq:clark}
    \widehat{\mathcal{O}}_0/\mathcal{O}_0 \xrightarrow{u}
    \widehat{\mathcal{O}}_0/\mathcal{O}_0
  \end{equation}
  equals zero.
  Since \(\mathcal{E}\) is regular singular, \(\nabla_{\mathrm{D}}\) is
  rank one and irregular, the indicial polynomial of \(u\)
  is zero. Thus the hypothesis on non-Liouville
  difference in
  Clark's theorem~\cite{clark:p-adic-convergence-solutions-linear-des} as stated
  in~\cite[Th\'eor\`eme~2.12]{chiarellotto:rigid-algebraic-comparison-theorem}
  is satisfied, and this theorem implies the index of~\eqref{eq:clark} is zero. This
  completes the proof of Proposition~\ref{proposition:proper-comparison}.
\end{proof}

Next we prove the main result of this note by removing the properness
hypothesis from Proposition~\ref{proposition:proper-comparison}.

\begin{situation}\label{situation:general-case}%
  We follow the conventions made in Paragraph~\ref{situation:overall-notation}.
  Let \(f\colon \overline{\mathfrak{X}} \to \mathbb{P}^{1}_{\mathcal{O}_K}\) be a
  proper morphism between smooth \(\mathcal{O}_K\)-schemes, such that the
  condition~\((\ast)\) in Paragraph~\ref{situation:proper-case} is satisfied.
  Assume \(\overline{\mathfrak{X}}\) has
  relative pure dimension \(n\) over \(\mathcal{O}_{K}\).

  Let \(\mathfrak{X}\) be a Zariski open subscheme of \(\overline{\mathfrak{X}}\),
  such that
  \begin{itemize}
  \item \(\mathfrak{X} \subset f^{-1}(\mathbb{A}^1_{\mathcal{O}_K})\),
  \item \(\mathfrak{D} = \overline{\mathfrak{X}} \setminus \mathfrak{X}\) is a relative
    Cartier divisor with relative strict normal crossings over \(\mathcal{O}_K\).
  \end{itemize}
  The support of the polar divisor \(\mathfrak{P} = f^{\ast}(\infty_{\mathcal{O}_K})\) is
  therefore contained in \(\mathfrak{D}\).
  We could write \(\mathfrak{P} = \sum_{i=1}^{r} m_i \mathfrak{D}_i\).
  Again, we assume \(p\nmid m_i\) for any \(i\).
  Finally we could write
  \(\mathfrak{D}=\sum_{i=1}^{r}\mathfrak{D}_{i}+\sum_{j=1}^{s}\mathfrak{H}_i\).
  Thus, for any subset \(I\) of \(\{1,2,\ldots,r\}\) and any subset \(J\) of
  \(\{1,2,\ldots,s\}\), the intersection
  \begin{equation*}
    \bigcap_{i \in I} \mathfrak{D}_{i} \cap \bigcap_{j \in J} \mathfrak{H}_{j}
  \end{equation*}
  is a smooth proper \(\mathcal{O}_K\)-scheme.
\end{situation}

\begin{theorem}%
  \label{theorem:main}
  Let notation and conventions be as in Paragraph~\ref{situation:general-case}.
  Then for each integer \(m\), the natural maps
  \begin{equation*}
    \mathrm{H}^{m}_{\mathrm{DR}}(X,\nabla_{\pi f}) \to
    \mathrm{H}^{m}_{\mathrm{DR}}(X^{\mathrm{an}},\nabla_{\pi f}) \to
    \mathrm{H}^{m}_{\mathrm{rig}}(X_0/K,f^{\ast}\mathcal{L}_{\pi})
  \end{equation*}
  are isomorphisms.
\end{theorem}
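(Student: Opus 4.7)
My plan is to reduce Theorem~\ref{theorem:main} to Proposition~\ref{proposition:proper-comparison} by induction on the integer $s$, the number of horizontal boundary components. When $s=0$ we have $\mathfrak{X}=f^{-1}(\mathbb{A}^{1}_{\mathcal{O}_{K}})$, and the statement is precisely Proposition~\ref{proposition:proper-comparison}.

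For the inductive step, I would fix $j_{0}\in\{1,\ldots,s\}$ and set
\[ \mathfrak{X}^{+}:=\overline{\mathfrak{X}}\setminus\Bigl(\bigcup_{i}\mathfrak{D}_{i}\cup\bigcup_{k\ne j_{0}}\mathfrak{H}_{k}\Bigr),\qquad \mathfrak{Z}:=\mathfrak{X}^{+}\cap\mathfrak{H}_{j_{0}}. \]
Then $\mathfrak{X}^{+}\subset\overline{\mathfrak{X}}$ is an open subscheme whose boundary in $\overline{\mathfrak{X}}$ has only $s-1$ horizontal components (with the same polar part), $\mathfrak{Z}\hookrightarrow\mathfrak{X}^{+}$ is a smooth divisor, and $\mathfrak{X}=\mathfrak{X}^{+}\setminus\mathfrak{Z}$. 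I would then apply the Gysin long exact sequence for the smooth pair $(\mathfrak{X}^{+},\mathfrak{Z})$ in all three cohomology theories appearing in the theorem. The algebraic and analytic de~Rham Gysin sequences (with the twist $\nabla_{\pi f}$) are available because $f$, and hence $\nabla_{\pi f}$, extends regularly across $\mathfrak{H}_{j_{0}}$; for the rigid side one invokes the Gysin/excision sequence for smooth closed immersions with overconvergent coefficients (Berthelot, Tsuzuki). Naturality of the comparison maps in the theorem produces a commutative ladder of long exact sequences.

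Both $\mathfrak{X}^{+}$ and $\mathfrak{Z}$ satisfy the hypotheses of Theorem~\ref{theorem:main} with at most $s-1$ horizontal components: for $\mathfrak{X}^{+}$ the ambient proper $\mathcal{O}_{K}$-scheme remains $\overline{\mathfrak{X}}$ with the same polar divisor; for $\mathfrak{Z}$ it is $\mathfrak{H}_{j_{0}}$, smooth and proper of relative dimension $n-1$ over $\mathcal{O}_{K}$, equipped with the composition $f|_{\mathfrak{H}_{j_{0}}}$, whose polar divisor $\sum_{i}m_{i}(\mathfrak{D}_{i}\cap\mathfrak{H}_{j_{0}})$ is still strict normal crossings with the same multiplicities (prime to $p$), and whose $s-1$ horizontal components are the $\mathfrak{H}_{k}\cap\mathfrak{H}_{j_{0}}$ for $k\ne j_{0}$. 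The inductive hypothesis thus produces the comparison isomorphisms for $\mathfrak{X}^{+}$ and $\mathfrak{Z}$, and the five lemma delivers Theorem~\ref{theorem:main} for $\mathfrak{X}$.

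I anticipate two principal obstacles. First, condition~$(\ast)$ of Paragraph~\ref{situation:proper-case} and the Berthelot--Tsuzuki overconvergence used in Lemma~\ref{lemma:direct-image-satisfy-robba-condition} must be verified for the restriction $f|_{\mathfrak{H}_{j_{0}}}\colon\mathfrak{H}_{j_{0}}\to\mathbb{P}^{1}_{\mathcal{O}_{K}}$ so that the inductive hypothesis genuinely applies to $\mathfrak{Z}$; the non-smooth locus of $f|_{\mathfrak{H}_{j_{0}}}$ can in principle be larger than that of $f$ and its interaction with $\mathbb{D}^{-}(\infty;1)$ must be controlled (possibly by further blowing up along $\mathfrak{H}_{j_{0}}$, which does not alter the twisted de~Rham or rigid cohomology). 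Second, the Gysin map for rigid cohomology with the exponential coefficient $f^{*}\mathcal{L}_{\pi}$ must be constructed and shown to be compatible (up to sign) with the algebraic and analytic de~Rham Gysin maps; I expect this to follow from the standard Berthelot--Chiarellotto formalism applied to the overconvergent isocrystal $f^{*}\mathcal{L}_{\pi}$, but the compatibility of residues along $\mathfrak{H}_{j_{0}}$ across the three comparison arrows will require explicit justification.
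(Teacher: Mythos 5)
Your strategy is mathematically sound and, unwound, amounts to the same underlying idea as the paper: comparison along the pieces of the horizontal normal-crossings boundary, glued by the Gysin formalism. The paper, however, packages this differently. Rather than induct on $s$, it introduces at one stroke the second-quadrant spectral sequence
\[
E_1^{-i,j}=\bigoplus_{\operatorname{Card}J=i}\mathrm{H}^{j-2i}_{\mathrm{rig}}\bigl(H_{J0}/K,f_J^{\ast}\mathcal{L}_{\pi}\bigr)\ \Longrightarrow\ \mathrm{H}^{-i+j}_{\mathrm{rig}}\bigl(X_0/K,f^{\ast}\mathcal{L}_{\pi}\bigr),
\]
with $E_1$-differentials the Gysin maps, and the analogous spectral sequences for algebraic and analytic de~Rham cohomology. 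Proposition~\ref{proposition:proper-comparison} is applied directly to every $H_J$, all of which are \emph{proper}, so the base case handles every page term at once and the abutment follows from the naturality of the comparison maps between spectral sequences. Your version achieves the same end by peeling off one horizontal divisor $\mathfrak{H}_{j_0}$ at a time and invoking the five lemma, which has the advantage of being phrased without spectral sequence technology but the cost of needing the \emph{inductive} hypothesis (for a non-proper $\mathfrak{Z}$) rather than reducing all the way down to the proper case in one step.

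Both routes hinge on the same two points you flag, and your caution there is well placed. First, applying either Proposition~\ref{proposition:proper-comparison} to the $H_J$ or the inductive hypothesis to $\mathfrak{Z}\subset\mathfrak{H}_{j_0}$ requires condition $(\ast)$ and the Berthelot--Tsuzuki overconvergence input (Lemma~\ref{lemma:direct-image-satisfy-robba-condition}) for the restricted morphisms; the paper does not spell this verification out, so your explicit acknowledgement of it is an improvement rather than a defect. Second, compatibility of the Gysin maps across the three cohomology theories is precisely what the paper uses to get a morphism of $E_1$-spectral sequences, so the same justification is needed in both arguments; you should supply the Berthelot--Chiarellotto formalism reference you mention, as the paper too leaves this mostly implicit. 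In summary: the proposal is correct, takes a closely parallel but structurally distinct route (induction plus five lemma instead of a single spectral sequence argument), and in fact is more honest than the paper about the hypotheses that must be re-verified along the way.
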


\begin{proof}
  For each subset \(J\) of \(\{1,2,\ldots,s\}\),
  denote by \(H_{J}\) the intersection \(\bigcap_{j\in J}H_j\).
  Then each \(H_{J}\) is a smooth proper \(K\)-scheme, and the restriction of
  \(f\) to \(H_{J}\) is denoted by \(f_{J}\), which is a proper morphism into
  \(\mathbb{P}^{1}_K\). The polar divisor of \(f_{J}\) is the restriction of
  \(P\) to \(H_{J}\). Note that \(\nabla_{\pi f}\) restricts to a connection
  on \(H_{J}\), which is \(\nabla_{\pi f_{J}}\).
  By Proposition~\ref{proposition:proper-comparison},
  the natural maps
  \begin{equation*}
    \mathrm{H}^{m}_{\mathrm{DR}}(H_J,\nabla_{\pi f_J}) \to
    \mathrm{H}^{m}_{\mathrm{DR}}(H_{J}^{\mathrm{an}},\nabla_{\pi f_J}) \to
    \mathrm{H}^{m}_{\mathrm{rig}}(H_{J0}/K,f_I^{\ast}\mathcal{L}_{\pi})
  \end{equation*}
  are isomorphisms.

  By general results in rigid cohomology, there exists a second-quadrant
  spectral sequence with
  \begin{equation*}
    E^{-i,j}_{1} = \bigoplus_{\operatorname{Card}J=i} \mathrm{H}^{j-2i}_{\mathrm{rig}}(H_{J0}/K,f^{\ast}_{I}\mathcal{L}_{\pi})
  \end{equation*}
  which abuts to \(\mathrm{H}^{-i+j}_{\mathrm{rig}}(X_0/K,f^{\ast}\mathcal{L}_{\pi})\).
  (We are unable to locate a precise reference, but this could be deduced from
  \cite[Proposition~7.4.13]{le-stum:rigid-cohomology}.)
  We have similar spectral sequences for the algebraic and analytic de~Rham
  cohomology groups. The \(E_1\)-differentials of these spectral sequence are
  all Gysin maps in the various theories.
  Thus the natural maps between the theories give rise to maps of these
  \(E_1\)-spectral sequences. Proposition~\ref{proposition:proper-comparison}
  implies that these maps are isomorphisms on the
  \(E_1\)-stage. Thus they induce isomorphisms on the abutments.
\end{proof}

\begin{proof}[Proof of Theorem~\ref{theorem:main-weak}]
  Let notation be as in the statement of Theorem~\ref{theorem:main-weak}.
  Let \(\mathbf{K}\) be the field of fractions of \(R\).
  Using resolution of singularities, upon making a finite extension of
  \(\mathbf{K}\) (both rigid and de~Rham cohomology are compatible with
  extensions, so performing a finite extension does not change the result),
  we can embed \(X_{\mathbf{K}}\) into a smooth
  proper \(\mathbf{K}\)-scheme \(\overline{X}_{\mathbf{K}}\) such that
  \(\overline{X}_{\mathbf{K}}\setminus X_{\mathbf{K}}\) has strict normal
  crossings, and \(f\) extends to a morphism
  \(f\colon\overline{X}_{\mathbf{K}} \to \mathbb{P}^{1}_{\mathbf{K}}\).
  There is a Zariski open subset \(U\) of \(\mathrm{Spec}(R)\) over which
  \begin{itemize}
  \item \(\overline{X}_{\mathbf{K}}\) as well as all the intersections of the
    boundary divisors have good reduction at primes in \(U\), and
  \item the multiplicities of the polar divisor are not
    divisible by residue characteristics of \(U\).
  \end{itemize}
  We may assume $U$ is affine, and by abusing notation still
  denote its associated ring by $R$.
  For each closed point \(\mathfrak{p}\) in \(U\),
  let \(p\) be the characteristic of the residue field \(\kappa(\mathfrak{p})\).
  Then there exists a \(p\)-adically complete discrete valuation ring
  \(R'\) containing \(\zeta_p\) and a
  surjective ring homomorphism \(R \to R'\),
  such that the closed point of \(\mathrm{Spec}(R')\) is mapped to \(\mathfrak{p}\).
  Let \(K'\) be the field of
  fractions of \(R'\)  and let \(k'\) be its residue field.
  Then we have
  \begin{equation*}
    \mathrm{H}^{\bullet}_{\mathrm{rig}}(X_{\kappa(\mathfrak{p})}/K_{\mathfrak{p}},\mathcal{L}_{\pi}) \otimes_{K_{\mathfrak{p}}} K'
    \cong \mathrm{H}^{\bullet}_{\mathrm{rig}}(X_{k'}/K',\mathcal{L}_{\pi}).
  \end{equation*}
  We can then apply Theorem~\ref{theorem:main} to \(X_{R'}\),
  obtaining an isomorphism between the rigid cohomology
  \(\mathrm{H}^{\bullet}_{\mathrm{rig}}(X_{k'}/K',\mathcal{L}_{\pi})\) and
  a twisted de~Rham cohomology of \(X_{K'}\).

  To conclude, we use the following two facts:
  (i) \(\nabla_{c f}\) and \(\nabla_{f}\) have isomorphic de~Rham cohomology
  group over \(K'\), for any \(c \in K'^{\times}\); and
  (ii) the formation of twisted algebraic de~Rham cohomology is compatible with
  extension of scalars. The fact (i) is proved below as a lemma.
\end{proof}

\begin{lemma}%
  \label{lemma:constant-does-not-change-twisted-de-rham}
  Let \(K\) be a field of characteristic \(0\).
  Let \(f\colon X \to \mathbb{A}^{1}\) be a morphism of smooth \(K\)-schemes.
  Then for all \(c \in K^{\times}\), the dimensions of the \(K\)-vector spaces
  \(\mathrm{H}^{i}(X,\nabla_{cf})\)
  are the same.
\end{lemma}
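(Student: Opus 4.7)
The plan is to realize all \(\nabla_{cf}\), \(c \in K^\times\), as fibers of a single relative connection over the connected base \(\mathbb{G}_{m,K}\), and then to argue that the associated relative twisted de Rham cohomology sheaves are locally free of constant rank.

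First, I would set \(Y = X \times_K \mathbb{G}_{m,K}\), let \(\pi\colon Y \to \mathbb{G}_{m,K} = \operatorname{Spec} K[t,t^{-1}]\) be the projection to the second factor, and define a relative integrable connection on \(\mathcal{O}_Y\) by \(\nabla^{\mathrm{rel}}(h) = d_{Y/\mathbb{G}_m} h + t h \cdot d_{Y/\mathbb{G}_m} f\). By construction, the restriction of \((\mathcal{O}_Y, \nabla^{\mathrm{rel}})\) to the fiber over \(c \in \mathbb{G}_m(K) = K^\times\) is exactly \((\mathcal{O}_X, \nabla_{cf})\). I would then form the relative twisted de Rham cohomology sheaves \(\mathcal{M}^i := R^i\pi_*(\Omega^\bullet_{Y/\mathbb{G}_m}, \nabla^{\mathrm{rel}})\) on \(\mathbb{G}_{m,K}\).

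The sheaf \(\mathcal{M}^i\) then carries a canonical integrable Gauss--Manin connection, obtained by promoting \(\nabla^{\mathrm{rel}}\) to the absolute connection \(\nabla_F\) on \(\mathcal{O}_Y\) associated with \(F(x,t) = tf(x)\). Assuming \(\mathcal{M}^i\) is \(\mathcal{O}_{\mathbb{G}_m}\)-coherent (see below), Kashiwara's fact that on a smooth variety in characteristic zero a coherent \(\mathcal{O}\)-module with integrable connection is locally free will force \(\mathcal{M}^i\) to be a vector bundle on the connected scheme \(\mathbb{G}_{m,K}\), of some constant rank \(r_i\). Flat base change then identifies the fiber of \(\mathcal{M}^i\) at \(c \in K^\times\) with \(\mathrm{H}^i_{\mathrm{DR}}(X,\nabla_{cf})\), so each of these has dimension \(r_i\), independent of \(c\).

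The hard part is establishing the coherence of \(\mathcal{M}^i\). Using resolution of singularities I would choose a smooth compactification \(\overline{X}\) of \(X\) with simple normal crossings boundary \(\overline{X}\setminus X\) and with \(f\) extending to a morphism \(\overline{f}\colon \overline{X}\to \mathbb{P}^1_K\); the relative twisted de Rham is then computed via the proper projection \(\overline{X}\times \mathbb{G}_m \to \mathbb{G}_m\) applied to a suitable (pole-order filtered) meromorphic relative de Rham complex with poles along \((\overline{X}\setminus X)\times \mathbb{G}_m\), and coherence amounts to a uniform-in-\(t\) version of the finiteness of exponentially twisted algebraic de Rham cohomology, in the spirit of Sabbah's work cited in the introduction. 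Once this coherence is in hand, the rest of the argument---Gauss--Manin, Kashiwara, flat base change, and connectedness of \(\mathbb{G}_{m,K}\)---is entirely formal.
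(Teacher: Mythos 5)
Your strategy is genuinely different from the paper's. The paper extracts a finitely generated subfield of \(K\), embeds it into \(\mathbb{C}\), and then invokes the Betti--de Rham comparison theorem already cited in the introduction (Deligne--Malgrange, Dimca--Saito, Sabbah): this identifies \(\mathrm{H}^i_{\mathrm{DR}}(X,\nabla_{cf})\) with the relative cohomology \(\mathrm{H}^i(X^{\mathrm{an}},(cf)^{-1}(t)^{\mathrm{an}})=\mathrm{H}^i(X^{\mathrm{an}},f^{-1}(t/c)^{\mathrm{an}})\) for \(|t|\gg 0\), and then one is done because \(f\) is a topological fibration over the complement of finitely many points, so the relative cohomology is the same for any two sufficiently large typical values. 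Your approach, by contrast, tries to stay on the de Rham side by deforming over \(\mathbb{G}_m\) and using Kashiwara's local-freeness theorem.

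The gap is in the step you defer: coherence (really perfectness) of \(R\pi_*\) of the relative twisted de Rham complex. This is not a routine application of properness, because after compactifying you are pushing forward a complex built out of \(\Omega^\bullet_{\overline{X}}(\ast D)\), which is only ind-coherent; finiteness of twisted de Rham cohomology is precisely the statement that a finite pole-order truncation computes the colimit, and you would need this truncation to be uniform in \(t\). Note also that your ``flat base change'' step is not free either: for a non-proper \(\pi\), the identification of \(\mathcal{M}^i\otimes\kappa(c)\) with \(\mathrm{H}^i_{\mathrm{DR}}(X,\nabla_{cf})\) requires exactly the same perfectness of \(R\pi_*\), so the two gaps you implicitly flag are a single gap. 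More pointedly, ``\(\mathcal{M}^i\) is a vector bundle on \(\mathbb{G}_m\)'' is essentially \emph{equivalent} to the assertion that \(\dim\mathrm{H}^i(X,\nabla_{cf})\) is constant in \(c\) (holonomicity of \(\mathcal{M}^i\) gives local freeness away from a finite singular set; the absence of singularities in \(\mathbb{G}_m\) is what you must prove), so without an independent argument for coherence the proof risks being circular. The cleanest way to establish what you need would in fact be to invoke the Betti comparison over \(\mathbb{C}\) to see the fiber dimensions cannot jump---which is the paper's argument. So while the Gauss--Manin/Kashiwara formalism is a nice conceptual framing, as written it shifts, rather than removes, the real work.
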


\begin{proof}
  It suffices to prove \(\nabla_{cf}\) and \(\nabla_{f}\) have isomorphic
  twisted de~Rham cohomology groups. The standard argument (extracting
  coefficients defining \(X\) and \(f\), choosing a finitely generated subfield,
  embedding the field into \(\mathbb{C}\)) allows us to assume
  \(K = \mathbb{C}\).
  Then we can use the isomorphism provided by
  (\ref{sit:intro}/\ref{item:betti}) and (\ref{sit:intro}/\ref{item:alg-dr})
  to conclude that the two twisted de~Rham cohomology groups are isomorphic to the
  relative cohomology groups
  \begin{equation*}
    \mathrm{H}^{i}(X^{\mathrm{an}},f^{-1}(t)^{\mathrm{an}}), \quad \text{and} \quad
    \mathrm{H}^{i}(X^{\mathrm{an}},(cf)^{-1}(t)^{\mathrm{an}})
    \quad (|t| >\!\!> 0)
  \end{equation*}
  respectively. When \(|t|\) is large, these two groups are isomorphic, since
  \(f\) is topologically a fibration away from finitely many points.
\end{proof}

\section{Examples}
\label{sec:examples}

\begin{example}%
  \label{example:illustration-newton-degenerate}
  As a sanity check, here is a trivial example calculating the degree of
  the L-series of a Newton degenerate polynomial.

  Let \(k\) be a finite field of characteristic \(> 2\).
  Let \(f\colon \mathbb{A}^{2}_{k} \to \mathbb{A}^{1}_{k}\) be the regular
  function defined by the polynomial \(f(x,y) = x^{2}y-x\).
  (The projective completion of the curve \(f(x,y) = t\), \(t\neq 0\), has a
  cuspidal singularity at \([0,1,0]\), which can be simultaneously resolved by
  blowing this point up.)

  The morphism is smooth, but the polynomial \(f\) is neither Newton
  non-degenerate nor convenient, thus the theorems of
  Adolphson--Sperber~\cite{adolphson-sperber:newton-polyhedra-degree-l-function}
  and Denef--Loeser~\cite{denef-loeser:weights-exponential-sums-newton-polyhedra}
  do not give the degree of the L-series.

  A direct computation is easy:
  \begin{align*}
    S_{m}(f) &= \sum_{x,y \in k_m} \psi_m(x^2y-x)) \\
             &= \sum_{x,y\in k_m} \psi_m(x^{2}y)\psi_{m}(x)^{-1} \\
             &= q^{m} + \sum_{x\neq 0}\psi(x)^{-1}\sum_{y\in k_m} \psi_{m}(x^{2}y)   \\
             &= q^{m}.
  \end{align*}
  It follows that the L-series is \((1-qt)^{-1}\).

  Here is a topological calculation.
  It is easy to see \(f(x,y) = t\) for \(t \neq 0\) is
  isomorphic to \(\mathbb{G}_{\mathrm{m}}\) as schemes over \(k\). Thus one can
  still use the Teichm\"uller lift of \(f\) to conclude that the rigid
  cohomology \(\mathrm{H}^{\ast}(\mathbb{A}^{2}_{k},f^{\ast}\mathcal{L}_{\pi})\)
  is \(1\)-dimensional in degree \(2\), and zero otherwise. In particular,
  \(L_{f}(t)^{-1}\) is a linear polynomial.
\end{example}

Next, we recall the degree and total degree of the L-series of
exponential sums.
Let notation be as in Paragraph~(\ref{sit:intro}/\ref{item:et}).
In view of Grothendieck's theorem, we can write
\(L_{f}(t) = P(t)/Q(t)\), where \(P,Q \in \overline{\mathbb{Q}}[t]\) are
coprime. Then the \emph{degree}  of \(L_{f}\) is \(\deg Q - \deg P\);
the \emph{total degree} of \(L_{f}\) is \(\deg P + \deg Q\).

By Berthelot's trace formula for rigid cohomology, we know that the degree
of \(L_f\) is equal to the Euler characteristic
\(\sum (-1)^{i}\dim \mathrm{H}_{\mathrm{rig,c}}^{i}(X,f^{\ast}\mathcal{L}_{\pi})\),
and the total degree of \(L_{f}\) is no bigger than
\(\sum \dim \mathrm{H}_{\mathrm{rig,c}}^{i}(X,f^{\ast}\mathcal{L}_{\pi})\).
(Since \(X\) is smooth, Poincar\'e duality identifies the dimension of the
compactly supported cohomology with the dimension of the rigid cohomology up to
a dimension shift.)

\begin{example}%
  \label{example:b3-arrangement}
  Hyperplane arrangements give rise to many interesting exponential sums whose
  L-series cannot be computed using traditional methods. But topological and
  combinatorial methods sometimes can deduce useful information about the Milnor
  fiber of the arrangement, which, through Theorem~\ref{theorem:main}, can
  determine the dimension of the rigid cohomology for large primes.

  Here are two concrete examples.
  \begin{enumerate}[wide]
  \item Let \(f(x,y,z)=xyz(x-y)(y-z)(z-x)\) be the polynomial
    defining the so-called \(A_3\) plane arrangement.
    Then \(f\) is not convenient,
    but Cohen and Suciu~\cite[Example~5.1]{cohen-suciu:milnor-fibrations-of-arrangements}
    have shown that the Betti numbers of the ``Milnor fiber'' \(f=1\)
    are: \(b_0=1\), \(b_1=7\), \(b_2=18\).
    Since \(f\) is homogeneous, all the fibers \(f^{-1}(t)\), \(t\neq 0\), are homeomorphic.
    It follows that the relative cohomology
    \(\mathrm{H}^{\bullet}(\mathbb{C}^{3},f^{-1}(t))\)
    equals the \emph{reduced} singular cohomology of \(f^{-1}(1)\), up to a
    shift of cohomology degree.
    Using the isomorphism between relative cohomology
    (\ref{sit:intro}/\ref{item:betti})
    and twisted de~Rham cohomology~(\ref{sit:intro}/\ref{item:alg-dr}),
    we deduce that the nonzero Betti numbers of the twisted de~Rham cohomology are
    \(b_2=7, b_3=18\).
    For \(p\) large, Theorem~\ref{theorem:main-weak} implies that the rigid cohomology
    groups of \(f^{\ast}\mathcal{L}_{\pi}\) with compact support are of dimensions
    \(0, 7, 18\) respectively.  In particular, the degree of the L-series of the
    exponential sum associated with \(f\) has degree \(11\); and total degree
    \(\leq 25\).
  \item Let \(f(x,y,z) = xyz(x+y)(x-y)(x+z)(x-z)(y+z)(y-z)\)
    be the polynomial defining the so-called \(B_3\) plane arrangement.
    Cohen and Suciu~\cite[Example~5.2]{cohen-suciu:milnor-fibrations-of-arrangements}
    has shown that the Betti numbers of the Milnor fiber \(f=1\)
    are: \(b_0=1\), \(b_1=8\), \(b_2=79\). As in (1), when \(p\) is large,
    we get the dimension of the rigid cohomology of \(f^{\ast}\mathcal{L}_{\pi}\), and
    we conclude that the degree of the L-series of \(f\) is \(71\), and the
    total degree is \(\leq 87\).
  \end{enumerate}
\end{example}

\begin{example}[Exponential sum on a curve]%
  \label{example:illustration-curve}
  Let \(X\) be a smooth irreducible projective curve of genus \(g\)
  over a finite field \(k\).
  Let \(f\colon X \to \mathbb{P}^{1}_{k}\) be a generically étale, degree \(d\) morphism with
  ramification indices not divisible by \(p\). We assume there is a good lift of
  \(f\) (i.e., satisfies Hypothesis~\ref{situation:general-case}).
  Let \(Z = f^{-1}(\infty) = \{\tau_1,\ldots,\tau_m\}\).
  Let \(V\) be a nonempty Zariski open subset of \(X\) such that
  \(f(V) \subset \mathbb{A}^{1}_{k}\). We assume that the ramification points of
  \(f\) as well as the points \(X\setminus V\) are rational over \(k\) (which is
  harmless in considering the degree of L-series). Let
  \(c = \operatorname{Card}(Z\setminus V)\).

  Then a topological calculation implies that the twisted algebraic de~Rham
  cohomology of a lift of \(f\) has cohomology in degree \(1\) only, and the
  dimension equals \(2g+c+m+d-2\). In this case Theorem~\ref{theorem:main-weak}
  applies and we conclude that \(L_{f|_{C}}^{-1}(t)\) is a polynomial of degree
  \(2g+c+m+d-2\). This matches the length of the ``Hodge polygon'' considered by
  Kramer-Miller~\cite[Theorem~1.1]{kramer-miller:newton-above-hodge-for-curves}.
\end{example}

\begin{example}[Exponential sum on \(\mathrm{SL}_{2}\)]%
  \label{example:illustration-sl2}
  Let \(k\) be a finite field of characteristic \(p>0\).
  Let \(V = k^2\) be the standard representation of \(\mathrm{SL}_{2}\).
  For \(A \in \mathrm{SL}_{2}(k)\), let \(A^{(n)}=\mathrm{Tr}(\mathrm{Sym}^{n}A)\).
  Then for \(a_1,\ldots,a_{N} \in k\),
  \begin{equation*}
    f(A) = \sum_{n=1}^{N} a_{n}A^{(n)}
  \end{equation*}
  is a regular function on \(\mathrm{SL}_{2}\).
  Then the rigid cohomology
  \(\mathrm{H}^{\ast}_{\mathrm{rig}}(\mathrm{SL}_{2},f^{\ast}\mathcal{L}_{\pi})\)
  is related to the exponential sum
  \begin{equation}
    \label{eq:exp-sum-on-sl2}
    \sum_{A \in \mathrm{SL}_2(k_m)} \psi_m(f(A)),
  \end{equation}
  where \(k_m\) is a fixed degree \(m\) extension of \(k\),
  \(\psi_1\) is a nontrivial additive character on \(k\),
  \(\psi_m = \psi_1\circ \mathrm{Tr}_{k_m/k}\).

  If \(p\) is sufficiently large, and if \((a_1,\ldots,a_N)\) is sufficiently general, we
  can calculate the dimension of the rigid cohomology using topology.
  It is not difficult to see \(f^{-1}(t)\) is \(N\) disjoint union of
  \(\mathbb{P}^{1}\times \mathbb{P}^{1}\setminus \Delta\).
  Using the long exact sequence for relative cohomology one sees that
  \(\mathrm{H}^{\ast}(\mathrm{SL}_2(\mathbb{C}),f^{-1}(t))\) is nonzero only in
  degree \(1\), and \(3\) and of dimension \(N-1\), \(N+1\) respectively.
  Thus the L-series \(L_f(t)\) of the exponential sums~\eqref{eq:exp-sum-on-sl2}
  is the reciprocal of a degree \(2N\) polynomial.
\end{example}

\begin{example}
  \label{example:last}
  Below we shall prove
  Corollary~\ref{corollary:standard-situation}, confirming the assertions
  made in Example~\ref{example:how-to-produce-such-a-function}.

  We begin with some rather trivial topological discussions.
  In (\ref{lemma:relative-cohomology-with-fiber}) -- (\ref{lemma:lefschetz}),
  complex algebraic varieties are equipped with analytic topology,
  and singular cohomology groups are assumed to have coefficients in
  \(\mathbb{Q}\).

  Let \(f\colon \overline{X} \to \mathbb{A}^{1}\)
  be a proper, generically smooth morphism of algebraic varieties over \(\mathbb{C}\).
  Let \(X\) be an open subvariety of \(\overline{X}\), and \(E = \overline{X}\setminus X\).
  Suppose that there is a neighborhood \(T\) of \(E\) such that \(f|_T\) and
  \(f|_{T\setminus E}\) are  locally topologically trivial fibrations
  (hence are trivial as \(\mathbb{A}^{1}\) is contractible).
  Let \(\overline{F}\) be a generic fiber of \(f\)
  and \(F = \overline{F} \cap X\).

      \begin{figure*}[ht]
      \centering
      \tikzset{every picture/.style={line width=0.75pt}} 

      \begin{tikzpicture}[x=0.75pt,y=0.75pt,yscale=-1,xscale=1]

        \draw  [color={rgb, 255:red, 155; green, 155; blue, 155 }  ,draw opacity=1 ][fill={rgb, 255:red, 155; green, 155; blue, 155 }  ,fill opacity=0.51 ][dash pattern={on 0.84pt off 2.51pt}] (62.8,16.2) -- (210.62,16.2) -- (210.62,46.98) -- (62.8,46.98) -- cycle ;
        \draw   (62.8,16.2) -- (210.62,16.2) -- (210.62,164.02) -- (62.8,164.02) -- cycle ;
        \draw    (69.42,208.2) -- (208.22,208.2) ;
        \draw [color={rgb, 255:red, 208; green, 2; blue, 27 }  ,draw opacity=1 ][line width=2.25]    (62.8,16.2) -- (210.62,16.2) ;
        \draw[->]    (135,170) -- (135,194) ;
        \draw    (149,182) node{\(f\)} ;
        \draw [color=blue  ,draw opacity=1 ]   (179.87,15.8) .. controls (199.29,124.58) and (119.42,66.98) .. (178.22,163.38) ;
        \shadedraw[ball color=blue, draw=red!50] (179,16) circle (2.5);
        \draw (179,0) node {\(\overline{F} \cap E\)};
        \draw (30,11.2) node [anchor=north west][inner sep=0.75pt]  [color={rgb, 255:red, 208; green, 2; blue, 27 }  ,opacity=1 ]  {$E$};
        \draw (70.8,27) node [anchor=north west][inner sep=0.75pt][color=gray]    {$T$};
        \draw (170.8,133.6) node [anchor=north west][inner sep=0.75pt]  [color=blue  ,opacity=1 ]  {$\overline{F}$};
        \draw (72.4,139.6) node [anchor=north west][inner sep=0.75pt]    {$\overline{X}$};
      \end{tikzpicture}
    \end{figure*}

  \begin{lemma}%
    \label{lemma:relative-cohomology-with-fiber}
    Notation as above, the natural map
    \(\mathrm{H}^{i}(\overline{X},\overline{F}) \to \mathrm{H}^{i}(X,F)\)
    is an isomorphism for all \(i\).
  \end{lemma}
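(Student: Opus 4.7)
The plan is to interpolate between the two pairs in the statement by passing through the enlarged pairs \((\overline{X},\overline{F}\cup T)\) and \((X,F\cup(T\setminus E))\) and then excise \(E\). The entire argument runs at the level of cohomology of pairs, using only long exact sequences of triples, excision, and the homotopy equivalences provided by the hypothesis that \(f|_{T}\) and \(f|_{T\setminus E}\) are trivial fibrations over the contractible base \(\mathbb{A}^{1}\).

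First, I would show that the restriction map
\[
\mathrm{H}^{i}(\overline{X},\overline{F}\cup T)\longrightarrow \mathrm{H}^{i}(\overline{X},\overline{F})
\]
is an isomorphism. Using the long exact sequence of the triple \((\overline{X},\overline{F}\cup T,\overline{F})\), it suffices to prove that \(\mathrm{H}^{\ast}(\overline{F}\cup T,\overline{F})=0\). Shrinking \(T\) so that a smaller neighborhood \(T'\subset T\) still contains \(E\) with \(\overline{T'}\subset T\), excision identifies this group with \(\mathrm{H}^{\ast}(T,T\cap\overline{F})\). Since \(f|_{T}\colon T\to\mathbb{A}^{1}\) is a trivial fibration, \(T\) deformation retracts onto the fiber \(T\cap\overline{F}\), so this relative cohomology vanishes. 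Exactly the same argument, applied to the fibration \(f|_{T\setminus E}\), shows that
\[
\mathrm{H}^{i}(X,F\cup(T\setminus E))\longrightarrow \mathrm{H}^{i}(X,F)
\]
is an isomorphism as well.

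Next, excision applied to the pair \((\overline{X},\overline{F}\cup T)\), cutting out \(E\) (which sits inside the open neighborhood \(T\subset\overline{F}\cup T\) with its closure still inside \(T\) after the shrinking above), yields a canonical isomorphism
\[
\mathrm{H}^{i}(\overline{X},\overline{F}\cup T)\;\cong\;\mathrm{H}^{i}(X,F\cup(T\setminus E)).
\]
Stringing the three isomorphisms together produces the desired isomorphism \(\mathrm{H}^{i}(\overline{X},\overline{F})\cong\mathrm{H}^{i}(X,F)\). Finally, all the maps involved are induced by inclusions of pairs, so a diagram chase confirms that the composite coincides with the natural restriction map appearing in the statement.

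The only genuine subtlety is checking that excision really applies in the form needed, namely that \(E\) lies in the interior of \(\overline{F}\cup T\) and that \((\overline{F}\cup T,\overline{F})\) is a good pair. In the complex-analytic setting both \(\overline{X}\) and its relevant subspaces admit semi-algebraic, hence triangulable, structures compatible with \(E\), \(\overline{F}\) and \(T\); after choosing a slightly smaller open neighborhood \(T'\Subset T\) of \(E\) one may perform excision without any pathology. This mild technical verification is the only step beyond the formal diagram argument.
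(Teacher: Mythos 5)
Your proof is correct and follows essentially the same route as the paper: interpolate through the pairs $(\overline{X},T\cup\overline{F})$ and $(X,(T\setminus E)\cup F)$, excise $E$, and use the triviality of $f|_T$ and $f|_{T\setminus E}$ to collapse the intermediate pairs. The only cosmetic difference is that the paper glues the deformation retraction of $T$ onto $T\cap\overline{F}$ with the identity on $\overline{F}$ to get a retraction of $T\cup\overline{F}$ onto $\overline{F}$ directly, whereas you deduce the same vanishing via the long exact sequence of a triple plus an extra excision — which is fine, but forces you into the interior-hypothesis bookkeeping you flag at the end, a step the paper's formulation avoids entirely.
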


  \begin{proof}
    This is a simple application of excision.
    To begin with, since \(T \to \mathbb{A}^{1}\) is a trivial fibration,
    there is a deformation retract from \(T\) onto \(T \cap \overline{F}\).
    This induces a deformation retract from \(T \cup \overline{F}\) onto
    \(\overline{F}\).
    Thus the pair \((\overline{X}, T \cup \overline{F})\) and the pair
    \((\overline{X},\overline{F})\) have the same cohomology.

    Since \(E\) is contained in the interior of \(T \cup \overline{F}\),
    excision implies that the pair \((\overline{X},T \cup \overline{F})\) and
    the pair \((X, (T\setminus E) \cup F)\) have the same cohomology.
    Using the fact that \(T \setminus E \to \mathbb{A}^{1}\)  is a trivial
    fibration, we find a deformation retract from \((T\setminus E) \cup F\)
    onto \(F\). Thus the pair \((X,(T\setminus E)\cup F)\) and the pair
    \((X,F)\) have the same cohomology. This completes the proof.
  \end{proof}

  Let \(P\) be a smooth projective variety.
  Let \(\mathcal{L}_{1}, \ldots, \mathcal{L}_{r}\)
  be invertible sheaves on \(P\).
  Fix sections \(s_i \in \mathrm{H}^{0}(P,\mathcal{L}_{i})\),
  whose associated Cartier divisor is \(D_{i}\).
  Assume that \(D_i\) is smooth and irreducible.
  Let \(e_1,\ldots,e_r\) be non-negative integers.
  Let \(s \in \mathrm{H}^{0}\left(P,\bigotimes_{i=1}^{r}\mathcal{L}_{i}^{\otimes e_i}\right)\).
  Denote by \(X_0\) the Cartier divisor defined by \(s\), and assume that \(X_0\)
  is smooth and irreducible.
  Let \(X_{\infty} = \sum e_{i}D_{i}\).
  Assume that \(X_0 + X_{\infty}\) is a divisor with strict normal crossings.
  Let \(\overline{X} = \mathrm{Bl}_{Z}P \setminus X_{\infty}\), where \(Z\) is the
  base locus of the pencil generated by \(X_0\) and \(X_{\infty}\). Then there is
  a generically smooth morphism \(f\colon \overline{X} \to \mathbb{A}^{1}\)
  and a projection \(\pi \colon \overline{X} \to P\).

  Now we are in the situation considered in
  Lemma~\ref{lemma:relative-cohomology-with-fiber}. Here \(E\) is the
  intersection of the exceptional divisor in \(\mathrm{Bl}_{Z}P\) with
  \(\overline{X}\). By construction, it has a ``tubular neighborhood'' \(T\)
  (e.g., the preimage of a tubular neighborhood of \(Z\) under the blowing up)
  within \(\overline{X}\) such that the restriction of \(f\) to \(T\) is a
  topologically trivial fibration.

  Retain the above notation. Let \(s_{\infty} = \prod s_{i}^{e_i}\).
  Let \(X = P \setminus X_{\infty}\).
  Then via the morphism \(\pi\), we can regard \(X\) as an open subscheme of
  \(\overline{X}\). The restriction of \(f\) to \(X\) is given by the ratio
  \(g = s_0/s_{\infty}\). Then by
  Lemma~\ref{lemma:relative-cohomology-with-fiber}, for \(t\) generic
  we have
  \begin{equation*}
    \mathrm{H}^{i}(\overline{X},X_t) \cong \mathrm{H}^{i}(X,g^{-1}(t)) \cong
    \mathrm{H}^{i}(\overline{X},g^{-1}(0)).
  \end{equation*}
  In practice, we are more interested in considering the function \(g\) on
  \(X\); and the construction above allows us to construct a proper function
  which is ready for taking reduction modulo \(p\).

  The next lemma tells us that in a certain preferable situation,
  the calculation of cohomology groups
  reduces to the calculation of the Euler characteristics.

  \begin{lemma}%
    \label{lemma:lefschetz}
    Notation as above, assume in addition that \(X\) and \(P \setminus X_{0}\) are affine
    (e.g., when the invertible sheaf \(\bigotimes_{i=1}^{r}\mathcal{L}_{i}^{\otimes e_i}\)
    is ample).
    Then \(\mathrm{H}^{i}(X,g^{-1}(0))\) is nonzero only if \(i = \dim X\).
  \end{lemma}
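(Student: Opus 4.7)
The plan is to prove the vanishing of $\mathrm{H}^i(X,g^{-1}(0);\mathbf{Q})$ for $i\neq n=\dim X$ by treating the two ranges $i>n$ and $i<n$ separately. Write $Z=g^{-1}(0)=X_0\setminus X_\infty$; the transversality assumptions of the standard situation force $Z$ to be smooth of complex dimension $n-1$, and as a closed subvariety of the affine $X$ it is itself affine.

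The high-degree range is immediate from Artin's vanishing theorem: $\mathrm{H}^i(X;\mathbf{Q})=0$ for $i>\dim X=n$ and $\mathrm{H}^{i-1}(Z;\mathbf{Q})=0$ for $i-1>\dim Z=n-1$, so the long exact sequence of the pair $(X,Z)$ immediately yields $\mathrm{H}^i(X,Z;\mathbf{Q})=0$ for $i>n$.

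For $i<n$ the relevant input is a relative affine Lefschetz theorem. First observe that the complement $X\setminus Z=(P\setminus X_\infty)\cap(P\setminus X_0)=P\setminus(X_0\cup X_\infty)$ is the intersection of two affine opens of the separated scheme $P$, and is therefore itself affine of dimension $n$. I would then invoke Hamm's theorem (a relative Andreotti--Frankel for Stein pairs): if $Y$ is a Stein manifold of complex dimension $n$ and $W\subset Y$ is a closed analytic subvariety with $Y\setminus W$ Stein, then the pair $(Y,W)$ is $(n-1)$-connected. Applied to $(X,Z)$ this yields $\mathrm{H}^i(X,Z;\mathbf{Q})=0$ for $i<n$, completing the proof.

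The main obstacle is squaring the use of Hamm's non-elementary theorem; a reader preferring a more self-contained path can instead proceed as follows. Since $Z$ is smooth of the expected codimension, $0$ is a regular value of $g$, the critical locus of $g$ in $\mathbb{A}^1$ is finite, and $\mathbb{A}^1$ minus its critical values is path-connected, so $\mathrm{H}^i(X,g^{-1}(0))\cong\mathrm{H}^i(X,g^{-1}(t))$ for any regular value $t$. The classical affine Lefschetz theorem for a regular function on a smooth affine variety (Morse theory on a strictly plurisubharmonic exhaustion of $X$ perturbed by $|g-t|^2$, as in Andreotti--Frankel) then presents $X$ as $g^{-1}(t)$ with cells of real dimension exactly $n$ attached, whence the vanishing in degrees below $n$ follows.
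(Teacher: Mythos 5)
Your main route rests on a theorem that is not correct as you have stated it. The claim ``if \(Y\) is a Stein manifold of complex dimension \(n\) and \(W\subset Y\) is a closed analytic subvariety with \(Y\setminus W\) Stein, then \((Y,W)\) is \((n-1)\)-connected'' has a simple counterexample: take \(Y=\mathbb{C}^{\ast}\times\mathbb{C}\) (smooth affine of dimension \(2\)), \(W=\{1\}\times\mathbb{C}\) (an irreducible, smooth, closed hypersurface), so that \(Y\setminus W=(\mathbb{C}^{\ast}\setminus\{1\})\times\mathbb{C}\) is affine, hence Stein. But \(\mathrm{H}^1(Y,W)=\ker(\mathrm{H}^1(Y)\to\mathrm{H}^1(W))=\ker(\mathbb{Q}\to 0)=\mathbb{Q}\neq 0\), so the pair is not \(1\)-connected. (A reducible example is even simpler: \(Y=\mathbb{C}^2\), \(W=\{x(x-1)=0\}\), where already \(\pi_1(Y,W)\neq 0\).) The identity \(\mathrm{H}^i(Y,W)=\mathrm{H}^i_c(Y\setminus W)\) is special to \emph{compact} \(Y\); it is not available when \(Y\) is merely affine, and this is precisely the gap. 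Your hypothesis that \(X\setminus Z=P\setminus(X_0\cup X_\infty)\) is affine is strictly weaker than the lemma's hypothesis that \(P\setminus X_0\) is affine, and in the counterexample above (\(P=\mathbb{P}^1\times\mathbb{P}^1\), \(X_0=\{1\}\times\mathbb{P}^1\)) the former holds while the latter fails — so your argument never actually uses the strength of the stated hypotheses.

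The paper avoids all of this via a spectral sequence (constructed in Paragraph~\ref{situation:spectral}) built from the simplicial resolution of \(\bigcup D_i\) inside \(P\), whose \(E_1\)-page consists of relative cohomology groups \(\mathrm{H}^{*}(D^{(p)},D^{(p)}\cap X_0)\) with \(D^{(p)}\) smooth \emph{proper}. For compact \(D^{(p)}\) one legitimately has \(\mathrm{H}^{i}(D^{(p)},D^{(p)}\cap X_0)=\mathrm{H}^{i}_{c}(D^{(p)}\setminus(D^{(p)}\cap X_0))\), and the full-strength hypothesis that \(P\setminus X_0\) is affine is exactly what makes every \(D^{(p)}\setminus(D^{(p)}\cap X_0)\) affine, so that Artin vanishing plus Poincar\'e duality kill the low-degree terms. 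Your second, ``self-contained'' sketch also has gaps: the isomorphism \(\mathrm{H}^i(X,g^{-1}(0))\cong\mathrm{H}^i(X,g^{-1}(t))\) does not follow from \(0\) being a regular value alone (one needs \(0\) and \(t\) to be typical values, i.e., to avoid bifurcation values at infinity — not merely critical values), and the proposed Andreotti–Frankel Morse theory on the fibers of \(g\) requires tameness of \(g\) at infinity, which is not established in your sketch.
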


  The Euler characteristic of
  \(\mathrm{H}^{\bullet}(X,g^{-1}(0))\)
  is
  \begin{equation*}
    (-1)^{\dim P}\int_P \frac{c((\Omega^{1}_P)^{\vee})}{(1 + \sum_{i=1}^{r} e_{i}c_{1}(\mathcal{L}_{i}))\prod_{i=1}^{r}(1+c_1(\mathcal{L}_{i}))}.
  \end{equation*}
  See for example~\cite[Theorem~5.4.1]{katz:sommes-exponentielles}.
  If the hypothesis of Lemma~\ref{lemma:lefschetz} is fulfilled,
  then the absolute value of
  this number is also the dimension of \(\mathrm{H}^{\dim X}(X,g^{-1}(0))\).

  \begin{proof}[Proof of Lemma~\ref{lemma:lefschetz}]
    Since \(X\) and \(g^{-1}(0)\) are smooth affine varieties,
    the relative cohomology \(\mathrm{H}^{i}(X,g^{-1}(0))\) vanishes
    if \(i> \dim X\).
    It suffices to prove the relative cohomology also vanishes when
    \(i < \dim X\).

    For any subset \(J\) of \(\{1,2,\ldots,r\}\),
    let \(D_{J} = \bigcap_{j\in J}D_{j}\).
    Let \(D^{(p)} = \coprod_{\operatorname{Card}J=p}D_{J}\), and write \(D^{(0)}=P\).
    The scheme \(D^{(p)}\) is smooth proper of dimension \(n-p\), and
    the natural morphism \(D^{(p)} \to P\) is affine.

    There exists a spectral sequence
    \begin{equation}\label{eq:spectral-sequence}
      E_{1}^{-p,q} = \mathrm{H}^{q-2p}(D^{(p)},D^{(p)}\times_{P} X_0)
      \Rightarrow \mathrm{H}^{q-p}(X,g^{-1}(0)).
    \end{equation}
    Granting the existence of this spectral sequence, let us finish the proof.
    Since \(P\setminus X_0\) is affine,
    \(D^{(p)} \setminus D^{(p)} \times_P X_0\) are affine for all \(p\).
    If \(i < \dim X\), then \(i - p < \dim D^{(p)}\).
    By Artin's vanishing theorem, we have
    \[
      \mathrm{H}^{i-p}(D^{(p)},D^{(p)}\times_P X_0) =
      \mathrm{H}^{i-p}_{c}(D^{(p)}\setminus D^{(p)}\times_P X_0) = \{0\}.
    \]
    It follows that \(E_{1}^{-p,q}=0\) if \(q-p< \dim X\).
    This implies that \(\mathrm{H}^{i}(X,g^{-1}(0))\) vanishes when
    \(i<\dim X\), as desired. The construction of the spectral
    sequence~\eqref{eq:spectral-sequence} will be recalled in
    Paragraph~\ref{situation:spectral} at the end of this section.
  \end{proof}

  Having discussed some topology, we now return to
  Example~\ref{example:how-to-produce-such-a-function}. We henceforth enforce
  the notation set up there.

  \begin{corollary}%
    \label{corollary:standard-situation}
    Assume \(\mathfrak{p}\) is a prime of
    \(\mathbf{K}\) satisfying the two conditions
    (\ref{example:how-to-produce-such-a-function}/1) and
    (\ref{example:how-to-produce-such-a-function}/2).
    Let \(p\) be the residue characteristic of \(\mathfrak{p}\),
    and \(\mathbf{K}_{\mathfrak{p}}\) be the completion of \(\mathbf{K}\) at
    \(\mathfrak{p}\).
    \begin{enumerate}[wide]
    \item
      The dimension of the rigid cohomology
      \(\mathrm{H}^{i}(X\otimes\mathbf{K}_{\mathfrak{p}}(\zeta_p),\mathcal{L}_{\pi})\)
      equals the dimension of
      \(\mathrm{H}^{i}(X^{\mathrm{an}}_{\mathbb{C}},g^{-1}(0)_{\mathbb{C}}^{\mathrm{an}})\).
    \item If both \(X\) and \(P \setminus X_0\) are affine,
      the rigid cohomology is nonzero only in cohomology degree \(\dim X\),
      and its dimension over \(\mathbf{K}_{\mathfrak{p}}\) is
      \begin{equation*}
        \int_P \frac{c((\Omega^{1}_P)^{\vee})}{(1 + \sum_{i=1}^{r} e_{i}c_{1}(\mathcal{L}_{i}))\prod_{i=1}^{r}(1+c_1(\mathcal{L}_{i}))}.
      \end{equation*}
      Thus, the L-series associated with the function \(g\) modulo
      \(\mathfrak{p}\) is a polynomial or a reciprocal of a polynomial,
      whose degree equals
      \(
      \int_P \frac{c((\Omega^{1}_P)^{\vee})}{(1 + \sum_{i=1}^{r} e_{i}c_{1}(\mathcal{L}_{i}))\prod_{i=1}^{r}(1+c_1(\mathcal{L}_{i}))}
      \).
    \end{enumerate}
  \end{corollary}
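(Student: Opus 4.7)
My plan is to apply Theorem~\ref{theorem:main} to a suitable lift of the pencil \((s_0:s_\infty)\) at \(\mathfrak{p}\), thereby reducing the rigid cohomology of \(g^{\ast}\mathcal{L}_\pi\) to the algebraic twisted de~Rham cohomology \(\mathrm{H}^{\bullet}_{\mathrm{DR}}(X,\nabla_{\pi g})\). From there, Lemma~\ref{lemma:constant-does-not-change-twisted-de-rham} lets me drop the constant \(\pi\), and the Deligne--Malgrange--Sabbah comparison recalled in (\ref{sit:intro}/\ref{item:alg-dr}) converts this, after base change to \(\mathbb{C}\) via any embedding, into the relative singular cohomology \(\mathrm{H}^{m}(X^{\mathrm{an}}_\mathbb{C},g^{-1}(t)^{\mathrm{an}}_\mathbb{C})\) for a typical value \(t\). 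Lemma~\ref{lemma:relative-cohomology-with-fiber}, applied to the proper compactification constructed below, rewrites this as \(\mathrm{H}^{m}(\overline{X}^{\mathrm{an}}_\mathbb{C},\overline{F}^{\mathrm{an}}_\mathbb{C})\); since \(f\) is a topological fibration over its typical values (an open set containing \(0\) under the standard-situation assumptions) I can slide \(t\) to \(0\), which yields~(1).

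To invoke Theorem~\ref{theorem:main} I construct the proper model following the recipe of Example~\ref{example:how-to-produce-such-a-function}. Condition~(\ref{example:how-to-produce-such-a-function}/1) lets me lift \((P,X_0,D_1,\ldots,D_r)\) together with all iterated intersections to smooth proper schemes over \(\mathcal{O}_{\mathbf{K}_\mathfrak{p}}\). The base locus \(\mathfrak{Z}=\mathfrak{X}_0 \cap \mathfrak{X}_\infty\) is then smooth, so the blow up \(\overline{\mathfrak{X}} = \mathrm{Bl}_\mathfrak{Z}\mathfrak{P}\) is smooth over \(\mathcal{O}_{\mathbf{K}_\mathfrak{p}}\) and carries a proper morphism \(\bar{f}\colon\overline{\mathfrak{X}}\to \mathbb{P}^{1}_{\mathcal{O}_{\mathbf{K}_\mathfrak{p}}}\) extending \(g\). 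The lift of \(X\) sits inside \(\overline{\mathfrak{X}}\) as the complement of a relative SNC divisor --- the strict transforms of the \(\mathfrak{D}_i\) together with the exceptional divisor of \(\mathfrak{Z}\) --- and the components of the polar divisor are the strict transforms of the \(\mathfrak{D}_i\) with multiplicities \(e_1,\ldots,e_r\), coprime to \(p\) by condition~(\ref{example:how-to-produce-such-a-function}/2). Finally, the nonsmooth locus of \(\bar{f}\) in characteristic zero is supported on \(\mathfrak{X}_0 \cup \bar{f}^{-1}(\infty)\), so the condition~\((\ast)\) of Paragraph~\ref{situation:proper-case} is satisfied and Theorem~\ref{theorem:main} applies.

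For~(2), the affineness of \(X\) and \(P\setminus X_0\) lets me invoke Lemma~\ref{lemma:lefschetz}, concluding that \(\mathrm{H}^{m}(X^{\mathrm{an}}_\mathbb{C},g^{-1}(0)^{\mathrm{an}}_\mathbb{C})\) vanishes outside degree \(\dim X\); by~(1) the rigid cohomology is concentrated in the same degree. Its dimension equals the absolute value of the Euler characteristic, which is the Chern-class integral stated in the corollary (cf.~\cite[Theorem~5.4.1]{katz:sommes-exponentielles}). The L-series assertion then follows from Berthelot's trace formula: concentration in a single cohomological degree forces \(L_g\) to be a polynomial when \(\dim X\) is odd and a reciprocal of a polynomial when \(\dim X\) is even, with degree equal to the dimension of that cohomology group. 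The most delicate point in the plan is the second paragraph: genuinely checking that the mixed-characteristic blow up is smooth with the advertised SNC complement and polar multiplicities, and that its nonsmooth locus really does sit inside the Zariski closure demanded by~\((\ast)\). This is a transversality bookkeeping exercise that consumes both conditions (\ref{example:how-to-produce-such-a-function}/1) and (\ref{example:how-to-produce-such-a-function}/2); once it is done, the rest is a direct assembly of machinery already developed.
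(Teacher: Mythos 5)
Your proposal follows the paper's route essentially exactly: apply Theorem~\ref{theorem:main} to the blown-up pencil model over \(\mathcal{O}_{\mathbf{K}_{\mathfrak{p}}}\), pass to twisted de~Rham cohomology, base-change to \(\mathbb{C}\), use the Betti/de~Rham comparison of~(\ref{sit:intro}/\ref{item:alg-dr}), and finish with Lemma~\ref{lemma:lefschetz} and the trace formula. You are usefully more explicit than the paper in two spots: the application of Lemma~\ref{lemma:constant-does-not-change-twisted-de-rham} to drop the constant \(\pi\) (the paper folds this into the proof of Theorem~\ref{theorem:main-weak} and then silently reuses it), and the reduction from a typical value \(t\) to \(t=0\), where the paper simply asserts that \(0\) is typical while you route through Lemma~\ref{lemma:relative-cohomology-with-fiber} and a fibration argument over \(\overline{X}\).

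However, the claim in your second paragraph that the nonsmooth locus of \(\bar f\) in characteristic zero is supported on the strict transform of \(X_0\) together with \(\bar f^{-1}(\infty)\) is false in general, and condition~\((\ast)\) does not follow from it. The proper morphism \(\bar f\colon\mathrm{Bl}_Z P \to \mathbb{P}^1\) is the pencil \(\lambda s_0 + \mu s_\infty\), and its members over finite \(t\neq 0\) are strict transforms of the hypersurfaces \(\{s_0 = t\,s_\infty\}\); these will typically acquire singularities for finitely many special \(t\in\mathbb{A}^1\setminus\{0\}\) (already for \(P=\mathbb{P}^2\), \(s_\infty=z^3\), \(s_0=x^3+y^3+z^3\) the member at \(t=1\) is a cone of three lines). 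What \((\ast)\) actually demands is that this finite set \(S\) of extra critical values avoid \(\mathbb{D}^{-}(\infty;1)\), i.e.\ that every finite critical value be integral at \(\mathfrak{p}\), and that the discriminant of \(\bar f\) over \(\mathcal{O}_{\mathbf{K}_{\mathfrak{p}}}\) contribute no new components in the special fiber beyond the closure of \(S\). This is what the good-reduction hypothesis on the log pairs \((P,X_\infty)\) and \((X_0, X_0\cap X_\infty)\) is meant to supply; it is not the statement that \(S=\{0,\infty\}\). You flag this verification as the delicate step, which is right, but the sufficient condition you wrote down for \((\ast)\) needs to be replaced with the genuine integrality argument before Theorem~\ref{theorem:main} can be invoked.
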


  If \(\mathcal{L}_i\) are ample, the assertion (2) is due to
  N.~Katz~\cite{katz:sommes-exponentielles}.

  \begin{proof}[Proof of Corollary~\ref{corollary:standard-situation}]
    We could assume that \(\mathbf{K}\) contains a \(p\)\textsuperscript{th}
    root of unity. The morphism
    \[
      f\colon \mathrm{Bl}_{Z}P \otimes \mathcal{O}_{\mathbf{K}_{\mathfrak{p}}}
      \to \mathbb{P}^{1}_{\mathcal{O}_{\mathbf{K}_{\mathfrak{p}}}}
    \]
    then satisfies the hypothesis of Theorem~\ref{theorem:main}.
    Thus the theorem implies the rigid cohomology associated with the reduction
    of \(g\) has the same dimension as the twisted algebraic de~Rham cohomology
    over \(\mathbf{K}_{\mathfrak{p}}\).

    By performing a base extension to \(\mathbb{C}\), using the isomorphism provided
    by~(\ref{sit:intro}/\ref{item:betti}) and (\ref{sit:intro}/\ref{item:alg-dr}),
    we know that the dimension of the twisted algebraic de~Rham cohomology
    defined by \(g\) is equal to the dimension of the relative cohomology
    \(\mathrm{H}^{i}(X^{\mathrm{an}}_{\mathbb{C}},g^{-1}(0)_{\mathbb{C}}^{\mathrm{an}})\).
    Here using \(t=0\) instead of a generic \(t\) is legal,
    because by construction \(0\) is a typical value of \(g\); see the footnote on
    page~\pageref{item:betti}.

    The second assertion follows from Lemma~\ref{lemma:lefschetz} and the
    Frobenius trace formula.
  \end{proof}

  A particular instance of the standard situation is as follows.
  Let \(P=\mathbb{P}^{n}\), \(X_{\infty}\) be the union of coordinate lines defined by
  \(z_0 z_1 \cdots z_n = 0\), and let \(X_0\) be the Fermat hypersurface of degree
  \(n+1\). Then the function
  \[
    z \mapsto \frac{z_0^{n+1} + z_1^{n+1} + \cdots + z_n^{n+1}}{z_0\cdots z_n}
    \colon\mathbb{G}_{\mathrm{m}}^{n} \to \mathbb{A}^{1}
  \]
  fits the standard situation. Assuming the residue characteristic of
  \(\mathfrak{p}\) is not a factor of \(n+1\),  then we can apply Theorem~\ref{theorem:main}.
  Since this is the ``ample case'',  Lemma~\ref{lemma:lefschetz} implies that the
  rigid cohomology is trivial except in degree \(n\), and its dimension equals
  \((-1)^{n}\) times the Euler characteristics, which is \(n^{n}(n+1)\).
\end{example}



\begin{situation}\label{situation:spectral}%
    We recall a construction of the spectral
    sequence~\eqref{eq:spectral-sequence}
    for convenience of the reader. Notation and conventions will be as in
    Lemma~\ref{lemma:lefschetz}.

    Consider the natural simplicial morphism \(a\colon D^{(\bullet+1)} \to P\),
    viewing \(P\) as a constant simplicial scheme. Then the relative cohomology
    of the pair \((X,g^{-1}(0))\) is computed by the homotopy cofiber of
    the morphism
    \begin{equation*}
      a_{\ast}a^{!} v_{!}\mathbb{Q} \to v_{!}\mathbb{Q},
    \end{equation*}
    where \(v\colon P\setminus X_0 \to P\) is the open immersion.
    \begin{figure}[ht]
      \centering
      \tikzset{every picture/.style={line width=0.75pt}} 
      \begin{tikzpicture}[x=0.75pt,y=0.75pt,yscale=-1,xscale=1]
        \draw  [fill={rgb, 255:red, 208; green, 2; blue, 27 }  ,fill opacity=0.54 ] (70,141.74) .. controls (70,146.02) and (84.42,149.49) .. (102.2,149.49) .. controls (119.98,149.49) and (134.4,146.02) .. (134.4,141.74) .. controls (134.4,137.47) and (148.82,134) .. (166.6,134) .. controls (184.38,134) and (198.8,137.47) .. (198.8,141.74) -- (198.8,203.7) .. controls (198.8,199.42) and (184.38,195.96) .. (166.6,195.96) .. controls (148.82,195.96) and (134.4,199.42) .. (134.4,203.7) .. controls (134.4,207.98) and (119.98,211.45) .. (102.2,211.45) .. controls (84.42,211.45) and (70,207.98) .. (70,203.7) -- cycle ;
        \draw [color={rgb, 255:red, 74; green, 144; blue, 226 }  ,draw opacity=1 ]   (91,74) -- (127.69,139.41) ;
        \draw [color={rgb, 255:red, 74; green, 144; blue, 226 }  ,draw opacity=1 ] [dash pattern={on 4.5pt off 4.5pt}]  (127.69,139.41) -- (145.38,172.12) ;
        \draw [color={rgb, 255:red, 74; green, 144; blue, 226 }  ,draw opacity=1 ]   (145.38,172.12) -- (175.76,225.24) ;
        \draw  [fill={rgb, 255:red, 0; green, 0; blue, 0 }  ,fill opacity=1 ] (142.44,170.24) .. controls (142.43,169.18) and (143.27,168.31) .. (144.32,168.29) .. controls (145.38,168.28) and (146.25,169.12) .. (146.26,170.18) .. controls (146.28,171.23) and (145.44,172.1) .. (144.38,172.12) .. controls (143.33,172.13) and (142.46,171.29) .. (142.44,170.24) -- cycle ;

        \draw (38.86,185.4) node [anchor=north west][inner sep=0.75pt]  [color={rgb, 255:red, 208; green, 2; blue, 27 }  ,opacity=1 ]  {$X_{0}$};
        \draw (65.24,68.8) node [anchor=north west][inner sep=0.75pt]  [color={rgb, 255:red, 74; green, 144; blue, 226 }  ,opacity=1 ]  {$D_{J}$};
      \end{tikzpicture}
    \end{figure}

    The complex
    \[
      a_{\ast}a^{!}v_{!}\mathbb{Q} = [\cdots \to a_{2\ast}a_{2}^{!}v_{!}\mathbb{Q} \to a_{1\ast}a_{1}^{!}v_{1}\mathbb{Q}]
    \]
    can be understood explicitly using the fact that
    \(D_{J}\) intersects \(X_0\) transversely. Indeed, if \(a_{J}\) is the
    inclusion morphism from \(D_{J}\) into \(P\), then \(a_{i} = \coprod_{\operatorname{Card}J=i}a_{J}\),
    and (we use \(\vee\) for Verdier dual)
    \begin{equation*}
      a_{J\ast}a_{J}^{!} v_{!}\mathbb{Q}[\dim X] = a_{J\ast}(a_{J}^{\ast}v_{\ast}\mathbb{Q}[\dim X])^{\vee}.
    \end{equation*}
    Let \(v_{J}\colon D_J \setminus D_{J} \cap X_0 \to D_{J}\) be the inclusion morphism.

    We claim that \(a_{J}^{\ast}Rv_{\ast}\mathbb{Q} \cong Rv_{J\ast}\mathbb{Q}\).
    Indeed, consider
    the following fiber diagram,
    \begin{equation*}
      \begin{tikzcd}
        X_0 \cap D_J \ar{r} \ar{d}{\iota_J} & X_0 \ar{d}{\iota} \\
        D_{J} \ar{r}{a_J} & P
      \end{tikzcd},
    \end{equation*}
    and the distinguished triangle
    \begin{equation*}
      \iota_{\ast}\iota^{!}\mathbb{Q} \to \mathbb{Q} \to Rv_{\ast} \mathbb{Q} \to.
    \end{equation*}
    Since \(X_0\) is a smooth divisor in \(P\), the Thom isomorphism theorem
    implies that \(\iota^{!}\mathbb{Q}_{P} = \mathbb{Q}_{X_0}[-2]\). Pulling back
    the above distinguished triangle by \(a_{J}\) yields a distinguished
    triangle
    \begin{equation*}
      a_{J}^{\ast}\iota_{\ast} \mathbb{Q}_{X_0}[-2] \to \mathbb{Q}_{D_J} \to a_{J}^{\ast} Rv_{\ast}\mathbb{Q} \to.
    \end{equation*}
    Since \(\iota\) is proper, and since \(X_0 \cap D_J\) is smooth of
    codimension \(1\), we have
    \begin{align*}
      a_{J}^{\ast} \iota_{\ast} \mathbb{Q}_{X_0}[-2]
      &=\iota_{J\ast} \mathbb{Q}_{X_0\cap D_J}[-2] \\
      &=\iota_{J\ast}\iota_{J}^{!}\mathbb{Q}_{D_J}.
    \end{align*}
    By adjunction, (Hom being taken in the derived category)
    \(\mathrm{Hom}(\iota_{J\ast}\iota_{J}^{!}\mathbb{Q}_{X_0},\mathbb{Q}_{X_0})\)
    equals
    \(\mathrm{Hom}(\iota_{J}^{!}\mathbb{Q},\iota_{J}^{!}\mathbb{Q})=\mathrm{H}^{0}(X_0\cap D_{J},\mathbb{Q})\).
    Thus up to a nonzero constant multiple on each connected component of \(X_0 \cap D_J\),
    there exists only one nonzero morphism from \(\iota_{J\ast}\iota_{J}^{!}\mathbb{Q}\)
    to \(\mathbb{Q}\) in the derived category. (In fact, using the existence of
    integral structure, the term ``up to a nonzero constant'' could be replaced
    by ``up to sign''.)
    Therefore the homotopy cofiber of the morphism
    \begin{equation*}
      \begin{tikzcd}[row sep=small]
        a_{J}^{\ast}\iota_{\ast} \mathbb{Q}_{X_0}[-2] \ar[equal]{d} \ar{r} & \mathbb{Q}_{D_J}  \\
        \iota_{J\ast}\iota_{J}^{!}\mathbb{Q}_{D_J}
      \end{tikzcd}
    \end{equation*}
    is necessarily isomorphic to the homotopy cofiber of the canonical morphism
    \[\iota_{J\ast}\iota_{J}^{!}\mathbb{Q}_{D_J} \to \mathbb{Q}_{D_J}\]
    which is \(Rv_{J\ast}\mathbb{Q}\). This justifies the claim.

    Thus, letting
    \(p=\operatorname{Card}J\) be the codimension of \(D_{J}\), we have
    \begin{equation*}
      a_{J\ast}a_{J}^{!} v_{!}\mathbb{Q} =
      a_{J\ast}(v_{J!} \mathbb{Q})[-2p].
    \end{equation*}
    The complex \(a_{J\ast}(v_{J!} \mathbb{Q})\) is precisely the derived
    incarnation of the relative cohomology
    \(\mathrm{H}^{\bullet}(D_{J},D_{J}\cap X_0)\).
    The spectral sequence (associated with the ``filtration bête'') of the
    complex \(a_{\ast}a^{!}v_{!}\mathbb{Q}\to \mathbb{Q}\) gives the
    spectral sequence~\eqref{eq:spectral-sequence}.
\end{situation}

\section{A remark on Higgs cohomology}

In the theory of twisted algebraic de~Rham cohomology, one can associate an
irregular Higgs field to the connection \(\nabla_{f}\), i.e., the Higgs field
defined by \(\wedge \mathrm{d}f\). A celebrated theorem of
Barannikov--Kontsevich (the first published proof is due to
Sabbah~\cite{sabbah:twisted-de-rham}) asserts that the algebraic Higgs cohomology and the
twisted algebraic de~Rham cohomology have the same dimension.

In this section we explain how to transplant this to the rigid analytic world.
We shall prove a comparison between a ``dagger'' Higgs
cohomology and the algebraic Higgs cohomology. Then in the nice situation,
Theorem~\ref{theorem:main-weak} allows us to relate twisted rigid cohomology and
the dagger Higgs cohomology.

In the work of
Adolphson--Sperber~\cite{adolphson-sperber:newton-polyhedra-degree-l-function}
on exponential sums, one also finds the use of
Higgs cohomology. In fact their finiteness theorem is deduced from the
finiteness of the Higgs cohomology of the reduction. In contrast, the result of
this section happens completely on the generic fiber, thereby does not really
concern whether the pole divisor has good reduction or not.

\begin{situation}\label{situation:hypothesis-higgs}%
  \textbf{Notation.}
  \begin{itemize}[wide]
  \item
    Let \(K\) be a discrete valuation field of characteristic \(0\) whose
    residue characteristic is \(p\).
    Let \(\mathfrak{X}\) be a scheme smooth over a discrete valuation ring \(\mathcal{O}_{K}\).
  \item   Let \(f\colon \mathfrak{X} \to \mathbb{A}^{1}_{R}\) be a \emph{proper} function
    admitting a compactification
    \(\overline{f}\colon \overline{\mathfrak{X}} \to \mathbb{P}^{1}_{\mathcal{O}_K}\),
    in which \(\overline{\mathfrak{X}}\) is smooth over \(R\).
  \item Also denote by \(\overline{f}\colon \overline{X} \to \mathbb{P}^{1}_{K}\)
    and \(f\colon X \to \mathbb{P}^{1}_K\) be the restriction of \(f\) to the
    generic fibers \(\overline{X}\) and \(X\) of \(\overline{\mathfrak{X}}\) and
    \(\mathfrak{X}\) respectively.
  \item
    Let \(V_{r}\) be the inverse image \(f^{-1}(\mathbb{D}^{+}(0;r))\) of the
    rigid analytic disk under \(f\). Thus \(V_r\) is an rigid analytic subspace
    of \(\overline{X}\).
  \end{itemize}
\end{situation}

\begin{situation}
  \textbf{Hypothesis.}
  We assume that \(f\colon X \to \mathbb{A}^{1}_{K}\) has no critical values in
  \(\mathbb{D}^{-}(\infty;1)\).
\end{situation}

Note that we do not enforce the hypothesis that the components of the pole
divisor have good reduction anymore.

\begin{situation}
  We can consider three types of Higgs cohomology.
  \begin{enumerate}[wide]
  \item   The algebraic Higgs cohomology
    \begin{equation*}
      R\Gamma(X, (\Omega_{X}^{\bullet},\mathrm{d}f))
    \end{equation*}
    of \(X\).
  \item A dagger version of the Higgs
    cohomology
    \begin{equation*}
      R\Gamma(\overline{X}^{\mathrm{an}}, j_1^{\dagger}(\Omega_{\overline{X}^{\mathrm{an}}}^{\bullet},\mathrm{d}f)),
    \end{equation*}
    where for a real number \(r\),
    \(j_{r}\colon V_r \to \overline{X}\) denotes the open inclusion.
  \item The analytic Higgs cohomology
    \begin{equation*}
      R\Gamma(V_{r}, (\Omega_{V_{r}}^{\bullet},\mathrm{d}f)) \quad(r>1).
    \end{equation*}
  \end{enumerate}
\end{situation}

\begin{proposition}%
  \label{proposition:overconvergent-higgs-qis-algebraic-higgs}
  Notation as above, the natural morphisms
  \begin{equation*}
    R\Gamma(X, (\Omega_{X}^{\bullet},\mathrm{d}f)) \to
    R\Gamma(V_{r}, (\Omega_{V_{r}}^{\bullet},\mathrm{d}f))\to
    R\Gamma(\overline{X}^{\mathrm{an}}, j_{1}^{\dagger}(\Omega_{\overline{X}^{\mathrm{an}}}^{\bullet},\mathrm{d}f))
  \end{equation*}
  are quasi-isomorphisms.
\end{proposition}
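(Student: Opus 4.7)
My plan exploits the observation that the Koszul complex $\mathcal{K} := (\Omega^\bullet, \mathrm{d}f)$ is null-homotopic at every point where $\mathrm{d}f$ is a nonzero section of $\Omega^1$: locally, complete $\mathrm{d}f$ to a basis of $\Omega^1$ and read off a contracting homotopy. Hence the cohomology sheaves $\mathcal{H}^q(\mathcal{K})$ are coherent and supported on the critical locus $C = \{\mathrm{d}f = 0\}$. A key preliminary is that $C$ is proper over $K$: since $f$ is proper, $f(C)$ is closed in $\mathbb{A}^1_K$, and by hypothesis $f(C) \subset \mathbb{D}^+(0;1) \subsetneq \mathbb{A}^{1,\mathrm{an}}_K$. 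As the only closed algebraic subsets of $\mathbb{A}^1_K$ whose analytifications are bounded are finite, $f(C)$ is finite and $C = f^{-1}(f(C))$ is proper. In particular $C \subset V_1 \subset V_r$ for every $r \geq 1$.

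I would prove the easier ``second'' isomorphism first, from the analytic $V_r$ to the dagger cohomology. Apply Mayer--Vietoris to the admissible cover $V_{r'} = V_r \cup U$ for $1 < r < r'$, where $U = f^{-1}\{t : r'' < |t| \leq r'\}$ and $1 < r'' < r$. Both $U$ and $V_r \cap U$ lie in $f^{-1}\{|t|>1\} \subset X^\mathrm{an} \setminus C^\mathrm{an}$, so by Koszul acyclicity $R\Gamma(U, \mathcal{K}) = R\Gamma(V_r \cap U, \mathcal{K}) = 0$, whence $R\Gamma(V_{r'}, \mathcal{K}) \xrightarrow{\sim} R\Gamma(V_r, \mathcal{K})$. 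Passing to the filtered colimit as $r' \downarrow 1$, and invoking the definition of $j_1^\dagger$ as the colimit of $(j_r)_* j_r^*$, yields the second asserted isomorphism.

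For the first arrow, I would factor it as $R\Gamma(X, \mathcal{K}) \to R\Gamma(X^\mathrm{an}, \mathcal{K}^\mathrm{an}) \xleftarrow{\sim} R\Gamma(V_r, \mathcal{K})$, where the backward arrow comes from the same Mayer--Vietoris argument applied to the admissible cover $X^\mathrm{an} = V_r \cup f^{-1}\{|t| > r''\}$. The forward arrow is the main subtlety, since $X$ itself is not proper and naive GAGA does not apply. The remedy is that each $\mathcal{H}^q(\mathcal{K})$ is coherent and supported on the proper subscheme $C$, so for every $p,q$,
\begin{equation*}
\mathrm{H}^p(X, \mathcal{H}^q(\mathcal{K})) = \mathrm{H}^p(C, \mathcal{H}^q(\mathcal{K})|_C) \xrightarrow{\sim} \mathrm{H}^p(C^\mathrm{an}, \mathcal{H}^q(\mathcal{K})^\mathrm{an}|_{C^\mathrm{an}}) = \mathrm{H}^p(X^\mathrm{an}, \mathcal{H}^q(\mathcal{K}^\mathrm{an})),
\end{equation*}
the middle isomorphism being GAGA on the proper $C$, and the last using exactness of analytification. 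An induction on the amplitude of $\mathcal{K}$ via the truncation distinguished triangles then upgrades this to the required $R\Gamma(X, \mathcal{K}) \xrightarrow{\sim} R\Gamma(X^\mathrm{an}, \mathcal{K}^\mathrm{an})$.

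The main obstacle is this last GAGA step: one must convert a statement about a non-proper scheme into one about its proper closed critical subscheme, exploiting that only the support of the Koszul cohomology is relevant. Once this is in place, every remaining comparison is a routine Mayer--Vietoris powered by Koszul acyclicity of $(\Omega^\bullet, \mathrm{d}f)$ away from $C$.
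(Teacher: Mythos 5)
Your proof is correct and takes a genuinely different route from the paper's. Both arguments rest on the Koszul acyclicity of $(\Omega^\bullet, \mathrm{d}f)$ away from the critical locus $C = \{\mathrm{d}f = 0\}$, but they invoke GAGA in different places and run Mayer--Vietoris over different spaces. The paper applies rigid GAGA to the proper compactification $\overline{X}$, identifying $R\Gamma(X, (\Omega^\bullet_X, \mathrm{d}f))$ with the cohomology on $\overline{X}^{\mathrm{an}}$ of the moderate-pole Higgs complex $(\Omega^\bullet_{\overline{X}^{\mathrm{an}}}(\ast P), \mathrm{d}f)$; it then covers $\overline{X}^{\mathrm{an}}$ by $V_r$ and a closed tube around the pole divisor $P$, and verifies --- via the Nullstellensatz for affinoid algebras, which is precisely the Koszul-acyclicity you isolate --- that both the moderate-pole and the plain Higgs complexes vanish on that tube. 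You instead observe that the cohomology sheaves $\mathcal{H}^q(\Omega^\bullet, \mathrm{d}f)$ are coherent and supported on $C$, deduce properness of $C$ from the hypothesis on critical values (note $C$ is only closed in, not equal to, the proper fibre $f^{-1}(f(C))$, but properness follows all the same), and apply GAGA directly to the proper subscheme $C$ via the hypercohomology spectral sequence; your Mayer--Vietoris covers then live on $X^{\mathrm{an}}$ and on $V_{r'}$ rather than on $\overline{X}^{\mathrm{an}}$. Your route buys a cleaner structural picture --- GAGA is reduced to the possibly tiny proper subscheme $C$, and the compactification together with the meromorphic-forms formalism is bypassed entirely --- at the modest cost of establishing the coherence-and-support lemma; the paper's route stays inside the standard toolkit of frames, tubes, and moderate-pole sheaves and handles the boundary behaviour head on.
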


\begin{proof}
  It suffices to prove the first arrow is a quasi-isomorphism,
  as the third item is obtained from the second by taking colimit with respect
  to \(r \to 1^{-}\).

  Let \(P\) be the pole divisor of
  \(f\colon \overline{X} \to \mathbb{P}^{1}_K\).
  For any connected subset \(I\) of \(\mathbb{R}_{\geq 0}\),
  let \(T_{I}\) be the inverse image of the rigid analytic annulus
  \(\Delta_{I}(\infty)\) centered at \(\infty \in \mathbb{P}^{1}\). Thus
  \begin{equation*}
    T_{I} = f^{-1}(\Delta_{I}(\infty)).
  \end{equation*}
  Let \(\Omega_{\overline{X}^{\mathrm{an}}}^{\bullet}(\ast P)\)
  be the subcomplex of \(j_{\ast}\Omega_{X^{\mathrm{an}}}^{\bullet}\) consisting of
  differential forms with at worst poles along \(P\).
  Then by rigid analytic GAGA, the natural morphism of complexes
  \begin{equation*}
    R\Gamma(X, (\Omega_{X},\mathrm{d}f)) \to
    \underbrace{\Gamma(\overline{X}^{\mathrm{an}},(\Omega_{\overline{X}^{\mathrm{an}}}^{\bullet}(\ast P),\mathrm{d}f))}_{\text{``moderate Higgs complex''}}
  \end{equation*}
  is a quasi-isomorphism.

  Choose a function
  \(r\mapsto \delta_r\colon \interval[open]{1}{\infty} \to \mathbb{R}\)
  such that \(1 > \delta_{r} > r^{-1}\).
  For each \(r > 1\), \(V_{r}\) and \(T_{[0,\delta_{r}]}\)
  form an admissible cover of \(\overline{X}\).
  By Mayer--Vietoris, the complex
  \(R\Gamma(\overline{X}^{\mathrm{an}},(\Omega_{\overline{X}^{\mathrm{an}}}^{\bullet}(\ast{P}),\mathrm{d}f))\)
  is the homotopy kernel of
  \begin{equation*}
    R\Gamma(V_{r},(\Omega_{V_r}^{\bullet},\mathrm{d}f)) \oplus
    R\Gamma(T_{[0,\delta_r]},(\Omega_{T_{[0,\delta_r]}}^{\bullet}(\ast{P}),\mathrm{d}f))
    \to
    R\Gamma(T_{[r^{-1},\delta_r]},(\Omega_{T_{[r^{-1},\delta_r]}}^{\bullet},\mathrm{d}f)))
  \end{equation*}
  We shall show that the natural morphism
  \begin{equation*}
    R\Gamma(T_{[0,\delta_r]},(\Omega_{T_{[0,\delta_r]}}^{\bullet}(\ast{P}),\mathrm{d}f))
    \to
    R\Gamma(T_{[r^{-1},\delta_r]},(\Omega_{T_{[r^{-1},\delta_r]}}^{\bullet},\mathrm{d}f)))
  \end{equation*}
  is a quasi-isomorphism (in fact, we shall show both are acyclic).

  Below we shall write \(a=r^{-1}\), \(b=\delta_r\). Thus \(0<a<b<1\).
  Let \(U = \mathrm{Sp}(A)\) be an affinoid subdomain of \(T_{[0,b]}\)
  admitting an étale morphism to the disk \(\mathbb{D}^{+}(0;1)^{n}\).
  Then \(W = U \cap T_{[a,b]}\) is an affinoid subdomain of \(U\)
  as \(T_{[a,b]} \to T_{[0,b]}\) is an affinoid morphism.
  It suffices to prove the restrictions of the two Higgs complexes on
  respectively \(U\) and \(V\) are acyclic.

  Write
  \(W=\mathrm{Sp}(B)\).
  Then the morphism \(f\colon \overline{X} \to \mathbb{P}^{1}\)
  gives rise to an element in \(A\), and an element in \(B\) via the morphism \(A \to B\).
  With respect to the the coordinate system provided by the étale morphism
  \(U\to \mathbb{D}^{+}(0;1)^{n}\), the Higgs complex of \(B\) is the Koszul
  complex associated with the partial derivatives
  \(\partial{f}/\partial x_{i}\). Since \(f\) is smooth, these partial
  derivatives form a regular sequence of \(B\) and the Jacobian ideal
  (i.e., the ideal formed by the partials) is not contained in any maximal ideal
  of \(B\). By Nullstellensatz for affinod algebras and standard commutative
  algebra, the Higgs complex is acyclic.

  It remains to show that the ``moderate Higgs complex'' on \(\mathrm{Sp}(A)\)
  is acyclic. With respect to the coordinate system, this is the Koszul complex
  of the partial derivatives of \(f\) on the ring \(A[1/f]\)
  (the localization of \(A\)). Again, since \(f\) is smooth, the partials of
  \(f\) form a regular sequence and has no common zeros. Thus the moderate Higgs
  complex is also acyclic. This completes the proof.
\end{proof}

We leave the apparent generalization to non-proper functions (using logarithmic
forms) to the reader. It should be mentioned that one cannot use algebraic forms
in the non-proper case to calculate the Higgs cohomology, as non-isolated
critical points will contribute infinite dimensional cohomology.

\bibliographystyle{plain}
\bibliography{exp.bib}%
\end{document}